\newsavebox{\xymor}  % --->
\newsavebox{\xymon}  % +-->
\newsavebox{\xyepi}  % --+>
\newsavebox{\xytn}   % -.->
\newsavebox{\xyrel}  % ---o
\newsavebox{\xycel}  % ===>
\newsavebox{\xymdf}  % ~~~>
\newsavebox{\xyumor} % ---`
\newsavebox{\xydmor} % ---,
\newsavebox{\xyomor} % ---<
\newsavebox{\xyemor} % >--->
\newcommand{\xynode}{\makebox[0ex]{}}
\savebox{\xymor}{\ensuremath{%
\xymatrix@1@C=19pt{\xynode \ar@{>}[r] & \xynode }}}
\savebox{\xymon}{\ensuremath{%
\xymatrix@1@C=19pt{\xynode \ar@{{ +}{-}{>}}[r] & \xynode }}}
\savebox{\xyepi}{\ensuremath{%
\xymatrix@1@C=19pt{\xynode \ar@{{}{-}{+>}}[r] & \xynode }}}
\savebox{\xytn}{\ensuremath{%
\xymatrix@1@C=19pt{\xynode \ar[r]|(.44){\object@{.-}} & \xynode
}}}
\savebox{\xyrel}{\ensuremath{%
\xymatrix@1@C=19pt{\xynode \ar@{{}{-}{-o}}[r] & \xynode }}}
\savebox{\xycel}{\ensuremath{%
\xymatrix@1@C=19pt{\xynode \ar@{=>}[r] & \xynode }}}
\savebox{\xymdf}{\ensuremath{%
\xymatrix@1@C=16pt{\xynode \ar@{}[r]|{\dir{~>}} & \xynode}}}
\savebox{\xyumor}{\ensuremath{%
\xymatrix@1@C=19pt{\xynode \ar@{{}{-}^{>}}[r] & \xynode }}}
\savebox{\xydmor}{\ensuremath{%
\xymatrix@1@C=19pt{\xynode \ar@{{}{-}_{>}}[r] & \xynode }}}
\savebox{\xyomor}{\ensuremath{%
\xymatrix@1@C=19pt{\xynode \ar@{{}{-}^{< }}[r] & \xynode }}}
\savebox{\xyemor}{\ensuremath{%
\xymatrix@1@C=19pt{\xynode \ar@{{ >}{-}{>}}[r] & \xynode }}}
\newcommand{\mor}{\usebox{\xymor}}    % mor  --->
\newcommand{\cel}{\usebox{\xycel}}    % cel  ===>
\theoremstyle{plain}
\newtheorem{theorem}{Theorem}[section]
\newtheorem{proposition}[theorem]{Proposition}
\newtheorem{corollary}[theorem]{Corollary}
\newtheorem{lemma}[theorem]{Lemma}
\theoremstyle{definition}
\newtheorem{definition}[theorem]{Definition}
\newtheorem*{remark}{Remark}
\newtheorem*{assumption}{Assumption}
\newtheorem*{conventions}{Conventions}
\theoremstyle{remark}
\numberwithin{equation}{section}
\begin{document}
%% Two authors
\title[Profinite many-sorted algebras and ultraproducts]
{When are profinite many-sorted algebras retracts of ultraproducts of finite many-sorted algebras?
}

\author[Climent]{J. Climent Vidal}
\address{Universitat de Val\`{e}ncia\\
         Departament de L\`{o}gica i Filosofia de la Ci\`{e}ncia\\
         Av. Blasco Ib\'{a}\~{n}ez, 30-$7^{\mathrm{a}}$, 46010 Val\`{e}ncia, Spain}
\email{Juan.B.Climent@uv.es}
%\thanks{The research of the first author}
\author[Cosme]{E. Cosme Ll\'{o}pez}
\address{Universitat de Val\`{e}ncia\\
         Departament d'\`{A}lgebra\\
         Dr. Moliner, 50, 46100 Burjassot, Val\`{e}ncia, Spain}
\email{Enric.Cosme@uv.es}
%\thanks{The research of the second author}

\subjclass[2000]{Primary: 03C20, 08A68; Secondary: 18A30.}
\keywords{Support of a many-sorted set, family of many-sorted algebras with constant support, profinite, retract, projective limit, inductive limit, ultraproduct.}
\date{\today}

\begin{abstract}
For a set of sorts $S$ and an $S$-sorted signature $\Sigma$ we prove that a profinite $\Sigma$-algebra, i.e., a projective limit of a projective system of finite $\Sigma$-algebras, is a retract of an ultraproduct of finite $\Sigma$-algebras if the family consisting of the finite $\Sigma$-algebras underlying the projective system is with constant support. In addition, we provide a categorial rendering of the above result. Specifically, after obtaining a category where the objects are the pairs formed by a nonempty upward directed preordered set and by an ultrafilter containing the filter of the final sections of it, we show that there exists a functor from the just mentioned category whose object mapping assigns to an object a natural transformation which is a retraction.
\end{abstract}
\maketitle
%\tableofcontents

%%%%%%%%%%%%%%%%%%%%%%%%%%%%%%%%%%%%%%%%%%%%%%%%%%%%%%%%%%%%

\section{Introduction.}

In their article ``Profinite structures are retracts of ultraproducts of finite structures''~\cite{mm07}, H. L. Mariano and F. Miraglia proved, for a single-sorted first order language with equality  $\mathcal{L}$, that the profinite $\mathcal{L}$-algebraic systems, i.e., the projective limits of finite $\mathcal{L}$-algebraic systems, are retracts of certain ultraproducts of finite $\mathcal{L}$-algebraic systems.

It is true that, broadly speaking, almost all fundamental statements from single-sorted algebra (or single-sorted equational logic), when suitably translated, are also valid for many-sorted algebra (or many-sorted equational logic). However, there are statements from single-sorted algebra which can not be generalized to many-sorted algebras without some type of qualification, which is ultimately grounded on the fact that many-sorted equational logic is not an inessential variation of single-sorted equational logic. (Some examples of theorems about single-sorted algebras which do not go through in their original form to the setting of many-sorted algebras can be found e.g., in~\cite{cs04}--\cite{cs16}, \cite{gm85}, \cite{m76}, and \cite{mrs13}.) In this connection, the aforementioned result of Mariano and Miraglia is no exception and in order to be adapted to many-sorted algebras, it will also require some adjustment. Accordingly, for an arbitrary set of sorts $S$ and an arbitrary $S$-sorted signature $\Sigma$, the main objective of this article is to establish a sufficient (and natural) condition for a profinite $\Sigma$-algebra to be a retract of an ultraproduct of finite $\Sigma$-algebras (let us notice that after having done that, the extension of this result to the case of a many-sorted first order language with equality $\mathcal{L}$ and  $\mathcal{L}$-algebraic systems is straightforward). We point out that the required adjustment is, ultimately, founded on the concept of support mapping for the set of sorts $S$ and on the notion of family of $\Sigma$-algebras with constant support (details will be found in the penultimate section of this article).

%Some algebraic constructions and results require adjustments in order to be adapted to
%many-sorted algebras. This paper considers the question of when there are straightforward
%analogues for many-sorted algebras of the standard characterizations for singlesorted
%algebras of directed colimits, reduced products and ultraproducts

We next proceed to succinctly summarize the contents of the subsequent sections of this article. The reader will find a more detailed explanation at the beginning of the succeeding sections.

In Section 2, for the convenience of the reader, we recall, mostly without proofs, for a set of sorts $S$ and an $S$-sorted signature $\Sigma$, those notions and constructions of the theories of $S$-sorted sets and of $\Sigma$-algebras which are indispensable to define in the following section those others which will allow us to achieve the above  mentioned  main results, thus making, so we hope, our exposition self-contained.

After having stated all of these auxiliary results we provide in Section 3 a solution to the problem posed in the title of this article. Concretely, we prove, for an $S$-sorted signature $\Sigma$, the following proposition:
{%\small
\begin{quotation}
\noindent If $\mathbf{A}$ is a profinite $\Sigma$-algebra, i.e., a projective limit of a projective system $\boldsymbol{\mathcal{A}}$ of finite $\Sigma$-algebras relative to a nonempty upward directed preordered set $\mathbf{I} = (I,\leq)$, and $(\mathbf{A}^{i})_{i\in I}$, the underlying family of finite $\Sigma$-algebras of $\boldsymbol{\mathcal{A}}$, is with constant support, then, for a suitable ultrafilter $\mathcal{F}$ on $I$, we have that $\mathbf{A}$ is a retract of $\prod_{i\in I}\mathbf{A}^{i}/\equiv^{\mathcal{F}}$, the ultraproduct of $(\mathbf{A}^{i})_{i\in I}$ relative to $\mathcal{F}$.
\end{quotation}
}

Finally, in Section 4, after obtaining, by means of the Grothendieck construction for a covariant functor from a convenient  category of nonempty upward directed preordered sets to the category of sets, a category in which the objects are the pairs formed by a nonempty upward directed preordered set and by an ultrafilter containing the filter of the final sections of it, we provide a categorial rendering of the aforementioned many-sorted version of Mariano-Miraglia theorem. Specifically, we show that there exists a functor from the just mentioned category whose object mapping assigns to an object a natural transformation, between two functors from a suitable category of projective systems of $\Sigma$-algebras to the category of $\Sigma$-algebras, which is a retraction.

Our underlying set theory is $\mathbf{ZFSk}$, Zermelo-Fraenkel-Skolem set theory (also known as $\mathbf{ZFC}$, i.e.,  Zermelo-Fraenkel set theory with the axiom of choice) plus the existence of a Grothendieck universe $\ensuremath{\boldsymbol{\mathcal{U}}}$, fixed once and for all (see~\cite{sM98}, pp.~21--24). We recall that the elements of $\ensuremath{\boldsymbol{\mathcal{U}}}$ are called $\ensuremath{\boldsymbol{\mathcal{U}}}$-small sets and the subsets of $\ensuremath{\boldsymbol{\mathcal{U}}}$ are called $\ensuremath{\boldsymbol{\mathcal{U}}}$-large sets or classes. Moreover, from now on $\mathbf{Set}$ stands for the category of sets, i.e., the category whose set of objects is $\ensuremath{\boldsymbol{\mathcal{U}}}$ and whose set of morphisms is $\bigcup_{A,B\in \boldsymbol{\mathcal{U}}}\mathrm{Hom}(A,B)$, the set of all mappings between $\ensuremath{\boldsymbol{\mathcal{U}}}$-small sets.
%(notice that $\mathrm{Mor}(\mathbf{Set})\subseteq \ensuremath{\boldsymbol{\mathcal{U}}}$ and that, for every $A$, $B\in \ensuremath{\boldsymbol{\mathcal{U}}}$, the hom-set $\mathrm{Hom}_{\mathbf{Set}}(A,B) = \mathrm{Hom}(A,B)\in \ensuremath{\boldsymbol{\mathcal{U}}}$).

In all that follows we use standard concepts and constructions from category theory, see~\cite{hs73}, \cite{sM98}, and \cite{em76}, and from many-sorted algebra, see~\cite{m76} and \cite{w92}. More specific notational and conceptual conventions will be included and explained in the following section.

\section{Preliminaries.}

%In this section we begin by defining for an arbitrary, but fixed, set of sorts $S$ and for an arbitrary, but fixed, $S$-sorted signature $\Sigma$ the concepts of support of an $S$-sorted set, of family of $S$-sorted sets with constant support, and of family of $\Sigma$-algebras with constant support. Then we state, for a many-sorted signature $\Sigma$ and a directed system of $\Sigma$-algebras, that the condition on the underlying family of $\Sigma$-algebras of the directed system of being a family with constant support is necessary and sufficient to guarantee that the underlying $\Sigma$-algebra of the classical directed colimit of the directed system is isomorphic to the standard $\Sigma$-algebra. Moreover, we prove that for a family of $\Sigma$-algebras and a filter (respectively, an ultrafilter) on the index set, the condition on the family of $\Sigma$-algebras of being a family with constant support is sufficient to ensure that the underlying $\Sigma$-algebra of the classical reduced product (respectively, ultraproduct) is isomorphic to the standard $\Sigma$-algebra. Therefore, for the aforementioned types of families of $\Sigma$-algebras the standard characterizations---that is, those made by means of the so-called eventually consistent choice functions (see below for the definition of this concept)---, of the directed colimits, reduced products, and ultraproducts of single-sorted algebras are preserved in the passage from single-sorted to many-sorted universal algebra.

In this section we introduce those basic notions and constructions which we shall need to obtain the aforementioned main result of this article. Specifically, for a set (of sorts) $S$ in $\ensuremath{\boldsymbol{\mathcal{U}}}$, we begin by recalling the concept of free monoid on $S$, which will be fundamental for defining the concept of $S$-sorted signature. Following this we define the concepts of $S$-sorted set, $S$-sorted mapping from an $S$-sorted set to another, and the corresponding category. Moreover, we define the subset relation between $S$-sorted sets, the notion of finiteness as applied to $S$-sorted sets, the concept of support of an $S$-sorted set, and its properties, the notion of $S$-sorted equivalence on an $S$-sorted set, the quotient $S$-sorted set of an $S$-sorted set by an $S$-sorted equivalence on it, the usual set-theoretic operations on the $S$-sorted sets, and the notion of family of $S$-sorted sets with constant support.

Afterwards, for a set (of sorts) $S$ in $\ensuremath{\boldsymbol{\mathcal{U}}}$, we define the notion of $S$-sorted signature. Next, for an $S$-sorted signature $\Sigma$, we define the concepts of $\Sigma$-algebra, $\Sigma$-homomorphism (or, to abbreviate, homomorphism) from a $\Sigma$-algebra to another, and the corresponding category. Moreover, we define the notions of support of a $\Sigma$-algebra, of finite $\Sigma$-algebra, of family of $\Sigma$-algebras with constant support, and of subalgebra of a $\Sigma$-algebra, the construction of the product of a family of $\Sigma$-algebras, the concept of congruence on a $\Sigma$-algebra, and the construction of the quotient $\Sigma$-algebra of a $\Sigma$-algebra by a congruence on it.

From now on we make the following assumption: $S$ is a set of sorts in $\ensuremath{\boldsymbol{\mathcal{U}}}$, fixed once and for all.

\begin{definition}
The \emph{free monoid on} $S$, denoted by $\mathbf{S}^{\star}$, is $(S^{\star},\curlywedge,\lambda)$, where $S^{\star}$, the set of all \emph{words on} $S$, is $\bigcup_{n\in\mathbb{N}}\mathrm{Hom}(n,S)$, $\curlywedge$, the \emph{concatenation} of words on $S$, is the binary operation on $S^{\star}$ which sends a pair of words $(w,v)$ on $S$ to the mapping $w\curlywedge v$ from $\lvert w \rvert+\lvert v \rvert$ to $S$, where $\lvert w \rvert$ and $\lvert v \rvert$ are the lengths ($\equiv$ domains) of the mappings $w$ and $v$, respectively, defined as follows: $w\curlywedge v(i) = w_{i}$, if $0\leq i < \lvert w \rvert$; $w\curlywedge v(i) = v_{i-\lvert w \rvert}$, if $\lvert w \rvert\leq i < \lvert w \rvert+\lvert v \rvert$,
%$$
%w\curlywedge v
%\nfunction
%{\lvert w \rvert+\lvert v \rvert}{S}
%{i}{
%\begin{cases}
%w_{i}, & \text{if $0\leq i < \lvert w \rvert$;}\\
%v_{i-\lvert w \rvert}, & \text{if $\lvert w \rvert\leq i < \lvert w \rvert+\lvert v \rvert$,}
%\end{cases}
%}
%$$
and $\lambda$, the \emph{empty word on} $S$, is the unique mapping from $0 = \varnothing$ to $S$.
\end{definition}

\begin{definition}
An $S$-\emph{sorted set} is a function $A = (A_{s})_{s\in S}$ from $S$ to $\ensuremath{\boldsymbol{\mathcal{U}}}$. If $A$ and $B$ are $S$-sorted sets, an $S$-\emph{sorted mapping from} $A$ \emph{to} $B$ is an $S$-indexed family $f = (f_{s})_{s\in S}$, where, for every $s$ in $S$, $f_{s}$ is a mapping from $A_{s}$ to  $B_{s}$. Thus, an $S$-sorted mapping from $A$ to $B$ is an element of $\prod_{s\in S}\mathrm{Hom}(A_{s}, B_{s})$. We denote by $\mathrm{Hom}(A,B)$ the set of all $S$-sorted mappings from $A$ to $B$. From now on, $\mathbf{Set}^{S}$ stands for the category of $S$-sorted sets and $S$-sorted mappings.
\end{definition}

\begin{definition}
Let $I$ be a set in $\ensuremath{\boldsymbol{\mathcal{U}}}$ and $(A^{i})_{i\in I}$ an $I$-indexed family of $S$-sorted sets. Then the \emph{product} of $(A^{i})_{i\in I}$, denoted by $\prod_{i\in I}A^{i}$, is the $S$-sorted set defined, for every $s\in S$, as $\left(\prod\nolimits_{i\in I}A^{i}\right)_{s} = \prod\nolimits_{i\in I}A^{i}_{s}$. Moreover, for every $i\in I$, the \emph{i-th canonical projection}, $\mathrm{pr}^{I,i} = (\mathrm{pr}^{I,i}_{s})_{s\in S}$, abbreviated to $\mathrm{pr}^{i} = (\mathrm{pr}^{i}_{s})_{s\in S}$ when this is unlikely to cause confusion, is the $S$-sorted mapping from  $\prod_{i\in I}A^{i}$ to $A^{i}$ which, for every $s\in S$, sends $(a_{i})_{i\in I}$ in $\prod_{i\in I}A^{i}_{s}$ to $a_{i}$ in $A^{i}_{s}$.
%defined, for every $s\in S$, as follows:
%$$
%\mathrm{pr}^{i}_{s}
%\nfunction
%{\prod_{i\in I}A^{i}_{s}}
%{A^{i}_{s}}
%{(a_{i})_{i\in I}}
%{a_{i}}
%$$
On the other hand, if $B$ is an $S$-sorted set and $(f^{i})_{i\in I}$ an $I$-indexed family of $S$-sorted mappings, where, for every $i\in I$, $f^{i}$ is an $S$-sorted mapping from $B$ to $A^{i}$, then we denote by $\left<f^{i}\right>_{i\in I}$ the unique $S$-sorted mapping $f$ from $B$ to $\prod_{i\in I}A^{i}$ such that, for every $i\in I$, $\mathrm{pr}^{i}\circ f = f^{i}$.

The remaining set-theoretic operations on $S$-sorted sets are defined in a similar way, i.e., componentwise.
\end{definition}

\begin{remark}
For a set $I$ in $\ensuremath{\boldsymbol{\mathcal{U}}}$ and an $I$-indexed family of $S$-sorted sets $(A^{i})_{i\in I}$, the ordered pair $(\prod_{i\in I}A^{i},(\mathrm{pr}^{i})_{i\in I})$ is a product of $(A^{i})_{i\in I}$ in $\mathbf{Set}^{S}$.
\end{remark}

\begin{definition}
We denote by $1^{S}$ or, to abbreviate, by $1$, the (standard) final $S$-sorted set of $\mathbf{Set}^{S}$, which is $1^{S} = (1)_{s\in S}$, and by $\varnothing^{S}$ the initial $S$-sorted set, which is $\varnothing^{S} = (\varnothing)_{s\in S}$.
\end{definition}

\begin{definition}
If $A$ and $B$ are $S$-sorted sets, then we will say that $A$ is a \emph{subset} of $B$, denoted by $A\subseteq B$, if, for every $s\in S$, $A_{s}\subseteq B_{s}$.
%We denote by $\mathrm{Sub}(A)$ the set of all $S$-sorted sets $X$ such that $X\subseteq A$.
\end{definition}

\begin{definition}
Let $f,g\colon A\mor B$ be two $S$-sorted mappings. Then the \emph{equalizer} of $f$ and $g$, denoted by $\mathrm{Eq}(f,g)$, is the subset of $A$ defined, for every $s\in S$, as $\mathrm{Eq}(f,g)_{s} = \{a\in A_{s}\mid f_{s}(a)=g_{s}(a)\}$. Moreover, $\mathrm{eq}(f,g)$ is the canonical embedding of $\mathrm{Eq}(f,g)$ into $A$.
\end{definition}

\begin{remark}
For a parallel pair $f,g\colon A\mor B$ of $S$-sorted mappings, the ordered pair $(\mathrm{Eq}(f,g),\mathrm{eq}(f,g))$ is an equalizer of $f$ and $g$ in $\mathbf{Set}^{S}$.
\end{remark}

\begin{definition}
An $S$-sorted set $A$ is \emph{finite} if $\coprod A = \bigcup_{s\in S}(A_{s}\times \{s\})$ is finite. We say that $A$ is a \emph{finite} subset of $B$ if $A$ is finite and $A\subseteq B$.
%We denote by $\mathrm{Sub}_{\mathrm{f}}(B)$ the set of all $S$-sorted sets $A$ in $\mathrm{Sub}(B)$ which are finite.
\end{definition}

\begin{remark}
For an object $A$ of the topos $\mathbf{Set}^{S}$, are equivalent: (1) $A$ is finite, (2) $A$ is a finitary object of $\mathbf{Set}^{S}$, and (3) $A$ is a strongly finitary object of $\mathbf{Set}^{S}$.
%For an object $A$ in the topos $\mathbf{Set}^{S}$, are equivalent: (1) $A$ is finite, (2) $A$ is a finitary object of $\mathbf{Set}^{S}$, and (3) $A$ is a strongly finitary object of $\mathbf{Set}^{S}$. We recall that an $S$-sorted set $A$ is finitary if, and only if, the covariant hom-functor $\mathrm{H}(A,\cdot)\colon \mathbf{Set}^{S}\mor \mathbf{Set}$ is finitary, i.e., if, and only if, for every $\ensuremath{\boldsymbol{\mathcal{U}}}$-small upward-directed preordered set $\mathbf{I}$ and every functor $D$ from $\mathbf{C}(\mathbf{\mathbf{I}})$, the category canonically associated to $\mathbf{I}$, to $\mathbf{Set}^{S}$, if $((f^{i})_{i\in \mathrm{Ob}(\mathbf{C}(\mathbf{\mathbf{I}}))},L)$ is an inductive limit of $D$, then $((\mathrm{H}(A,f^{i}))_{i\in \mathrm{Ob}(\mathbf{C}(\mathbf{\mathbf{I}}))},\mathrm{H}(A,L))$ is an epi-sink. Moreover, an  $S$-sorted set $A$ is strongly finitary if, and only if, the functor $\mathrm{H}(A,\cdot)$ is strongly finitary, i.e., if, and only if, for every $\ensuremath{\boldsymbol{\mathcal{U}}}$-small upward-directed preordered set $\mathbf{I}$ and every functor $D$ from $\mathbf{C}(\mathbf{\mathbf{I}})$ to $\mathbf{Set}^{S}$, if $((f^{i})_{i\in \mathrm{Ob}(\mathbf{C}(\mathbf{\mathbf{I}}))},L)$ is an inductive limit of $D$, then $((\mathrm{H}(A,f^{i}))_{i\in \mathrm{Ob}(\mathbf{C}(\mathbf{\mathbf{I}}))},\mathrm{H}(A,L))$ is an inductive limit.
In $\mathbf{Set}^{S}$ there is another notion of finiteness: An $S$-sorted set $A$ is $S$-finite if, and only if, for every $s\in S$, $A_{s}$ is finite. However, unless $S$ is finite, this notion of finiteness is not categorial.
\end{remark}

\begin{definition}
Let $A$ be an $S$-sorted set. Then the \emph{support of} $A$, denoted by $\mathrm{supp}_{S}(A)$, is the set $\{\,s\in S\mid A_{s}\neq \varnothing\,\}$.
\end{definition}

\begin{remark}
An $S$-sorted set $A$ is finite if, and only if, $\mathrm{supp}_{S}(A)$ is finite and, for every $s\in \mathrm{supp}_{S}(A)$, $A_{s}$ is finite.
\end{remark}

%In the following proposition, for a set of sorts $S$, we gather together only those properties of the support mapping for $S$
%which will actually be used afterwards.

In the following proposition we gather together only those properties of the mapping   $\mathrm{supp}_{S}\colon\ensuremath{\boldsymbol{\mathcal{U}}}^{S}\mor \mathrm{Sub}(S)$, the support mapping for $S$, which sends an $S$-sorted set $A$ to $\mathrm{supp}_{S}(A)$, which will actually be used afterwards.

\begin{proposition}\label{propssupport}
Let $A$ and $B$ be two $S$-sorted sets, $I$ a set in $\ensuremath{\boldsymbol{\mathcal{U}}}$, and $(A^{i})_{i\in I}$ an $I$-indexed family of $S$-sorted sets. Then the following properties hold:
\begin{enumerate}
\item $\mathrm{Hom}(A,B)\neq \varnothing$ if, and only if,
      $\mathrm{supp}_{S}(A)\subseteq\mathrm{supp}_{S}(B)$. Therefore, if $A\subseteq B$, then $\mathrm{supp}_{S}(A)\subseteq\mathrm{supp}_{S}(B)$.

\item If from $A$ to $B$ there exists a surjective $S$-sorted mapping $f$, then we have that
      $\mathrm{supp}_{S}(A) = \mathrm{supp}_{S}(B)$.

\item $\mathrm{supp}_{S}(\prod_{i\in I}A^{i}) = \bigcap\nolimits_{i\in I}\mathrm{supp}_{S}(A^{i})$ (if $I = \varnothing$, we adopt the convention that $\bigcap\nolimits_{i\in I}\mathrm{supp}_{S}(A^{i}) = S$, since $\prod_{i\in \varnothing} A^{i}$ is $1 = (1)_{s\in S}$, the final object of $\mathbf{Set}^{S}$).
\end{enumerate}
\end{proposition}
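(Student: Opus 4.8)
The plan is to reduce each of the three statements to an elementary fact about ordinary (unsorted) sets, applied sortwise, using the descriptions recalled above of an $S$-sorted mapping as an element of $\prod_{s\in S}\mathrm{Hom}(A_{s},B_{s})$ and of the product $\prod_{i\in I}A^{i}$ as the $S$-sorted set with $(\prod_{i\in I}A^{i})_{s}=\prod_{i\in I}A^{i}_{s}$. The two underlying unsorted facts are: (a) a product of a family of sets is nonempty if, and only if, every member of the family is nonempty; and (b) for sets $X$, $Y$ one has $\mathrm{Hom}(X,Y)\neq\varnothing$ if, and only if, $X=\varnothing$ or $Y\neq\varnothing$, i.e.\ if, and only if, $X\neq\varnothing$ implies $Y\neq\varnothing$, while a surjection $X\twoheadrightarrow Y$ forces $X\neq\varnothing$ to be equivalent to $Y\neq\varnothing$.

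For item~(1), by fact~(a) the set $\mathrm{Hom}(A,B)=\prod_{s\in S}\mathrm{Hom}(A_{s},B_{s})$ is nonempty exactly when $\mathrm{Hom}(A_{s},B_{s})\neq\varnothing$ for every $s\in S$, and by fact~(b) this holds exactly when, for every $s\in S$, $A_{s}\neq\varnothing$ implies $B_{s}\neq\varnothing$, which is precisely $\mathrm{supp}_{S}(A)\subseteq\mathrm{supp}_{S}(B)$. The ``therefore'' clause then follows because, when $A\subseteq B$, the family of canonical inclusions $A_{s}\hookrightarrow B_{s}$ is an $S$-sorted mapping from $A$ to $B$, so $\mathrm{Hom}(A,B)\neq\varnothing$ (or, directly, $A_{s}\subseteq B_{s}$ and $A_{s}\neq\varnothing$ force $B_{s}\neq\varnothing$). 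For item~(2), the inclusion $\mathrm{supp}_{S}(A)\subseteq\mathrm{supp}_{S}(B)$ is immediate from item~(1), since the surjective $S$-sorted mapping $f$ witnesses $\mathrm{Hom}(A,B)\neq\varnothing$; for the reverse inclusion, given $s\in\mathrm{supp}_{S}(B)$ pick $b\in B_{s}$, and surjectivity of $f_{s}\colon A_{s}\to B_{s}$ yields $a\in A_{s}$ with $f_{s}(a)=b$, whence $A_{s}\neq\varnothing$ and $s\in\mathrm{supp}_{S}(A)$.

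For item~(3), for each $s\in S$ fact~(a) gives that $\left(\prod_{i\in I}A^{i}\right)_{s}=\prod_{i\in I}A^{i}_{s}$ is nonempty if, and only if, $A^{i}_{s}\neq\varnothing$ for every $i\in I$, i.e.\ if, and only if, $s\in\bigcap_{i\in I}\mathrm{supp}_{S}(A^{i})$; quantifying over $s$ gives the claimed equality. In the degenerate case $I=\varnothing$ the empty product $\prod_{i\in\varnothing}A^{i}$ is the final object $1=(1)_{s\in S}$, whose support is all of $S$, matching the stated convention $\bigcap_{i\in\varnothing}\mathrm{supp}_{S}(A^{i})=S$.

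I do not expect a genuine obstacle: all three items are bookkeeping consequences of the sortwise definitions. The only points deserving a word of care are the systematic appeal to the axiom of choice (available in $\mathbf{ZFSk}$) in the two places where one passes from ``every factor is nonempty'' to ``the product is nonempty'', and the handling of the edge cases $I=\varnothing$ in item~(3) (and, implicitly, $S=\varnothing$, in which case every assertion is vacuously true).
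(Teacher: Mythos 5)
Your proof is correct; the paper states this proposition in its preliminaries without proof, and your sortwise reduction to the elementary facts about nonemptiness of hom-sets and products (with the explicit acknowledgement of where the axiom of choice enters) is exactly the standard argument one would supply.
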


\begin{remark}
The concept of support does not play any significant role in the case of the single-sorted algebras. Nevertheless, it (together with, among others, the notions of uniform algebraic closure operator on an $S$-sorted set, delta of Kronecker, subfinal $S$-sorted set, finite $S$-sorted set, and family of $S$-sorted sets with constant support) has turned to be essential to accomplish some investigations in the field of many-sorted algebras, e.g., those carried out in~\cite{cs04}--\cite{cs16}.
\end{remark}

In the following definition of the concept of family of $S$-sorted sets with constant support use will be made of the concept of support defined above.

\begin{definition}
Let $I$ be a set and $(A^{i})_{i\in I}$ an $I$-indexed family of $S$-sorted sets. We say that $(A^{i})_{i\in I}$ is a family of $S$-sorted sets with \emph{constant support} if, for every $i, j\in I$, $\mathrm{supp}_{S}(A^{i}) = \mathrm{supp}_{S}(A^{j})$.
\end{definition}

\begin{definition}
An $S$-\emph{sorted equivalence relation on} (or, to abbreviate, an $S$-\emph{sorted equivalence on}) an $S$-sorted set $A$ is an $S$-sorted relation $\Phi$ on $A$, i.e., a subset $\Phi = (\Phi_{s})_{s\in S}$ of the cartesian product $A\times A = (A_{s}\times A_{s})_{s\in S}$ such that, for every $s\in S$, $\Phi_{s}$ is an equivalence relation on $A_{s}$.
%We denote by $\mathrm{Eqv}(A)$ the set of all $S$-sorted equivalences on $A$ (which is an algebraic closure system on $A\times A$), by $\mathbf{Eqv}(A)$ the algebraic lattice  $(\mathrm{Eqv}(A),\subseteq)$, by $\nabla^{A}$ the greatest element of $\mathbf{Eqv}(A)$, and by $\Delta^{A}$ the least element of $\mathbf{Eqv}(A)$.

For an $S$-sorted equivalence relation $\Phi$ on $A$, $A/\Phi$, the $S$-\emph{sorted quotient set of} $A$ \emph{by} $\Phi$, is $(A_{s}/\Phi_{s})_{s\in S}$, and $\mathrm{pr}^{\Phi}\colon A\mor A/\Phi$, the \emph{canonical projection from} $A$ \emph{to} $A/\Phi$, is the $S$-sorted mapping $(\mathrm{pr}^{\Phi_{s}})_{s\in S}$, where, for every $s\in S$, $\mathrm{pr}^{\Phi_{s}}$ is the canonical projection from $A_{s}$ to $A_{s}/\Phi_{s}$ (which sends $x$ in $A_{s}$ to $\mathrm{pr}^{\Phi_{s}}(x) = [x]_{\Phi_{s}}$, the $\Phi_{s}$-equivalence class of $x$, in $A_{s}/\Phi_{s}$).

%Let $\Phi$ and $\Psi$ be $S$-sorted equivalence on $A$ such that $\Phi\subseteq\Psi$. Then the \emph{quotient of} $\Psi$ \emph{by} $\Phi$, denoted by $\Psi/\Phi$, is the $S$-sorted equivalence $(\Psi_{s}/\Phi_{s})_{s\in S}$ on $A/\Phi$ defined, for every $s\in S$, as follows:
%$$
%\Psi_{s}/\Phi_{s}=\{([a]_{\Phi_{s}},
%[b]_{\Phi_{s}})\in (A_{s}/\Phi_{s})\mid (a,b)\in\Psi_{s}\}.
%$$

%Let $X$ be a subset of $A$ and $\Phi\in\mathrm{Eqv}(A)$. Then the $\Phi$-\emph{saturation of} $X$ (or, the \emph{saturation of} $X$ \emph{with respect to} $\Phi$), denoted by $[X]^{\Phi}$, is the $S$-sorted set defined, for every $s\in S$, as follows:
%$$
%\textstyle[X]^{\Phi}_{s} = \{a\in A_{s}\mid
%X_{s}\cap [a]_{\Phi_{s}}\neq\varnothing\}= \bigcup_{x\in X_{s}}[x]_{\Phi_{s}} = [X_{s}]^{\Phi_{s}}.
%$$
%Let $X$ be a subset of $A$ and $\Phi\in\mathrm{Eqv}(A)$. Then we say that $X$ is $\Phi$-\emph{saturated} if, and only if, $X = [X]^{\Phi}$. We will denote by $\Phi\!-\!\mathrm{Sat}(A)$ the subset of $\mathrm{Sub}(A)$ defined as $\Phi\!-\!\mathrm{Sat}(A) = \{X\in \mathrm{Sub}(A)\mid X = [X]^{\Phi}\}$.
\end{definition}

\begin{remark}
Let $A$ be an $S$-sorted set and $\Phi\in\mathrm{Eqv}(A)$. Then, by Proposition~\ref{propssupport}, $\mathrm{supp}_{S}(A) = \mathrm{supp}_{S}(A/\Phi)$.
\end{remark}

We next recall the concept of kernel of an $S$-sorted mapping and the universal property of the $S$-sorted quotient set of an $S$-sorted set by an $S$-sorted equivalence on it
%(which are at the basis of those of kernel of a $\Sigma$-homomorphism and of the universal property of the quotient $\Sigma$-algebra of a $\Sigma$-algebra by a congruence on it, respectively).

\begin{definition}
Let $f\colon A\mor B$ be an $S$-sorted mapping. Then the \emph{kernel} of $f$, denoted by  $\mathrm{Ker}(f)$, is the $S$-sorted relation defined, for every $s\in S$, as $\mathrm{Ker}(f)_{s} = \mathrm{Ker}(f_{s})$ (i.e., as the kernel pair of $f_{s}$).
\end{definition}

\begin{proposition}
If $f$ is an $S$-sorted mapping from $A$ to $B$, then we have that $\mathrm{Ker}(f)\in\mathrm{Eqv}(A)$. Moreover, given an $S$-sorted set $A$ and an $S$-sorted equivalence $\Phi$ on $A$, the pair $(\mathrm{pr}^{\Phi},A/\Phi)$ is such that (1) $\mathrm{Ker}(\mathrm{pr}^{\Phi}) = \Phi$, and (2) \emph{(universal property)} for every $S$-sorted mapping $f\colon A\mor B$, if $\Phi\subseteq \mathrm{Ker}(f)$, then there exists a unique $S$-sorted mapping $\mathrm{p}^{\Phi,\mathrm{Ker}(f)}$ from $A/\Phi$ to $B$ such that $f = \mathrm{p}^{\Phi,\mathrm{Ker}(f)}\circ \mathrm{pr}^{\Phi}$.
\end{proposition}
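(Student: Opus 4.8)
The plan is to reduce everything to the corresponding single-sorted facts, exploiting that $\mathrm{Ker}$, the quotient $A/\Phi$ and the canonical projection $\mathrm{pr}^{\Phi}$ were all defined sort by sort, so that the statement is essentially the $S$-fold product of a standard fact about sets, quotients and kernel pairs. First I would fix $s\in S$ and note that $\mathrm{Ker}(f)_{s} = \mathrm{Ker}(f_{s})$ is, by definition, the kernel pair of the ordinary mapping $f_{s}\colon A_{s}\to B_{s}$, hence an equivalence relation on $A_{s}$; since this holds for every $s\in S$, the definition of $S$-sorted equivalence gives $\mathrm{Ker}(f)\in\mathrm{Eqv}(A)$.

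For claim (1) I would again argue componentwise: $\mathrm{Ker}(\mathrm{pr}^{\Phi})_{s} = \mathrm{Ker}(\mathrm{pr}^{\Phi_{s}})$, and for the ordinary canonical projection $\mathrm{pr}^{\Phi_{s}}\colon A_{s}\to A_{s}/\Phi_{s}$ one has $\mathrm{pr}^{\Phi_{s}}(x)=\mathrm{pr}^{\Phi_{s}}(y)$, i.e.\ $[x]_{\Phi_{s}}=[y]_{\Phi_{s}}$, if and only if $(x,y)\in\Phi_{s}$; hence $\mathrm{Ker}(\mathrm{pr}^{\Phi_{s}})=\Phi_{s}$. Taking this for all $s\in S$ yields $\mathrm{Ker}(\mathrm{pr}^{\Phi})=\Phi$.

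For the universal property (2), the hypothesis $\Phi\subseteq\mathrm{Ker}(f)$ unwinds, by the definition of the subset relation between $S$-sorted sets, to $\Phi_{s}\subseteq\mathrm{Ker}(f_{s})$ for every $s\in S$. I would then invoke the classical single-sorted universal property of the quotient set: for each $s$ there is a unique mapping $\mathrm{p}^{\Phi_{s},\mathrm{Ker}(f_{s})}\colon A_{s}/\Phi_{s}\to B_{s}$, namely $[x]_{\Phi_{s}}\mapsto f_{s}(x)$ --- well defined precisely because $\Phi_{s}\subseteq\mathrm{Ker}(f_{s})$ --- satisfying $f_{s}=\mathrm{p}^{\Phi_{s},\mathrm{Ker}(f_{s})}\circ\mathrm{pr}^{\Phi_{s}}$. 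I would then set $\mathrm{p}^{\Phi,\mathrm{Ker}(f)}=(\mathrm{p}^{\Phi_{s},\mathrm{Ker}(f_{s})})_{s\in S}$, which is an element of $\prod_{s\in S}\mathrm{Hom}((A/\Phi)_{s},B_{s})$ and hence an $S$-sorted mapping $A/\Phi\mor B$; the identity $f=\mathrm{p}^{\Phi,\mathrm{Ker}(f)}\circ\mathrm{pr}^{\Phi}$ holds because it holds in each sort. Uniqueness descends likewise: if $g\colon A/\Phi\mor B$ satisfies $f=g\circ\mathrm{pr}^{\Phi}$, then each $g_{s}$ satisfies $f_{s}=g_{s}\circ\mathrm{pr}^{\Phi_{s}}$, so $g_{s}=\mathrm{p}^{\Phi_{s},\mathrm{Ker}(f_{s})}$ by the single-sorted uniqueness, whence $g=\mathrm{p}^{\Phi,\mathrm{Ker}(f)}$.

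I do not anticipate any genuine obstacle: the entire content is that the relevant $S$-sorted constructions are computed sortwise, so the result follows from the familiar facts about ordinary kernel pairs and quotients applied in parallel over all $s\in S$. The only point deserving a line of care is making explicit that ``componentwise'' preserves both the existence and the uniqueness halves of the universal property, which is immediate from the description of $S$-sorted mappings as families indexed by $S$.
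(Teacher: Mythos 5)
Your proof is correct and is exactly the argument the paper has in mind: the proposition is stated without proof in the preliminaries precisely because $\mathrm{Ker}$, $A/\Phi$, $\mathrm{pr}^{\Phi}$, and the subset relation are all defined componentwise, so the claim reduces sort by sort to the classical single-sorted facts about kernel pairs and quotient sets, as you carry out. Your closing remark about checking that both existence and uniqueness pass through the componentwise assembly is the only point of substance, and you handle it correctly.
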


%\begin{proposition}
%Let $f$ be an $S$-sorted mapping from $A$ to $B$. Then $\mathrm{Ker}(f)$ is an $S$-sorted equivalence on $A$.
%\end{proposition}

%\begin{proposition}
%Let $A$ be an $S$-sorted set and $\Phi\in \mathrm{Eqv}(A)$. Then the pair $(\mathrm{pr}^{\Phi},A/\Phi)$ is such that:
%\begin{enumerate}
%\item $\mathrm{Ker}(\mathrm{pr}^{\Phi}) = \Phi$.
%\item \emph{(Universal property)} For every $S$-sorted mapping $f\colon A\mor B$, if $\Phi\subseteq \mathrm{Ker}(f)$, then there exists a unique $S$-sorted mapping $\mathrm{p}^{\Phi,\mathrm{Ker}(f)}$ from $A/\Phi$ to $B$ such that $f = \mathrm{p}^{\Phi,\mathrm{Ker}(f)}\circ \mathrm{pr}^{\Phi}$.
%\end{enumerate}
%\end{proposition}

Following this we define, for the set of sorts $S$, the category of $S$-sorted signatures.

\begin{definition}\label{$S$-sorted signature}
An $S$-\emph{sorted signature} is a function $\Sigma$ from $S^{\star}\times S$ to $\ensuremath{\boldsymbol{\mathcal{U}}}$ which sends a pair $(w,s)\in S^{\star}\times S$ to the set $\Sigma_{w,s}$ of the
\emph{formal operations} of \emph{arity} $w$, \emph{sort} (or \emph{coarity}) $s$, and \emph{rank} (or \emph{biarity}) $(w,s)$.  Sometimes we will write $\sigma\colon w\mor s$ to indicate that the formal operation $\sigma$ belongs to
$\Sigma_{w,s}$.
\end{definition}

From now on we make the following assumption: $\Sigma$ stands for an $S$-sorted signature, fixed once and for all.

We next define the category of $\Sigma$-algebras.

\begin{definition}
The $S^{\star}\times S$-sorted set of the \emph{finitary operations on} an $S$-sorted set $A$ is $(\mathrm{Hom}(A_{w},A_{s}))_{(w,s)\in S^{\star}\times S}$, where, for every $w\in S^{\star}$, $A_{w} = \prod_{i\in \lvert w\rvert}A_{w_{i}}$, with $\lvert w\rvert$ denoting the length of the word $w$.
A \emph{structure of} $\Sigma$-\emph{algebra on} an $S$-sorted set  $A$ is a family $(F_{w,s})_{(w,s)\in S^{\star}\times S}$, denoted by $F$, where, for $(w,s)\in S^{\star}\times S$, $F_{w,s}$ is a mapping from $\Sigma_{w,s}$ to $\mathrm{Hom}(A_{w},A_{s})$. For a pair $(w,s)\in S^{\star}\times S$ and a formal operation $\sigma\in \Sigma_{w,s}$, in order to simplify the notation, the operation from $A_{w}$ to $A_{s}$ corresponding to $\sigma$ under $F_{w,s}$ will be written as $F_{\sigma}$ instead of $F_{w,s}(\sigma)$. A $\Sigma$-\emph{algebra} is a pair $(A,F)$, abbreviated to $\mathbf{A}$, where $A$ is an $S$-sorted set and $F$ a structure of $\Sigma$-algebra on $A$. A $\Sigma$-\emph{homomorphism} from $\mathbf{A}$ to $\mathbf{B}$, where $\mathbf{B} = (B,G)$, is a triple $(\mathbf{A},f,\mathbf{B})$, abbreviated to $f\colon \mathbf{A}\mor \mathbf{B}$, where $f$ is an $S$-sorted mapping from $A$ to $B$ such that, for every $(w,s)\in S^{\star}\times S$, every  $\sigma\in \Sigma_{w,s}$, and every $(a_{i})_{i\in \lvert w\rvert}\in A_{w}$, we have that
$
f_{s}(F_{\sigma}((a_{i})_{i\in \lvert w\rvert})) = G_{\sigma}(f_{w}((a_{i})_{i\in \lvert w\rvert})),
$
where $f_{w}$ is the mapping $\prod_{i\in \lvert w\rvert}f_{w_{i}}$ from $A_{w}$ to $B_{w}$ which sends $(a_{i})_{i\in \lvert w\rvert}$ in $A_{w}$ to $(f_{w_{i}}(a_{i}))_{i\in \lvert w\rvert}$ in $B_{w}$. We denote by $\mathbf{Alg}(\Sigma)$ the category of $\Sigma$-algebras and $\Sigma$-homomorphisms (or, to abbreviate, homomorphisms) and by $\mathrm{Alg}(\Sigma)$ the set of objects  of $\mathbf{Alg}(\Sigma)$.
\end{definition}

%\begin{remark}
%With regard to the category $\mathbf{Alg}(\Sigma)$ let us point out the following. (1) That $\mathrm{Alg}(\Sigma)\subseteq\ensuremath{\boldsymbol{\mathcal{U}}}$. And (2) that, for every $\mathbf{A}$, $\mathbf{B}\in \mathrm{Alg}(\Sigma)$, $\mathrm{Hom}_{\mathbf{Alg}(\Sigma)}(\mathbf{A},\mathbf{B})\in\ensuremath{\boldsymbol{\mathcal{U}}}$. Thus $\mathbf{Alg}(\Sigma)$ is a $\ensuremath{\boldsymbol{\mathcal{U}}}$-category.
%\end{remark}

\begin{definition}
Let $\mathbf{A}$ be a $\Sigma$-algebra. Then the \emph{support of} $\mathbf{A}$, denoted by $\mathrm{supp}_{S}(\mathbf{A})$, is $\mathrm{supp}_{S}(A)$, i.e., the support of the underlying $S$-sorted set $A$ of $\mathbf{A}$.
\end{definition}

\begin{remark}
The set $\{\mathrm{supp}_{S}(\mathbf{A})\mid \mathbf{A}\in \mathrm{Alg}(\Sigma)\}$ is a closure system on $S$.
\end{remark}

\begin{definition}
Let $\mathbf{A}$ be a $\Sigma$-algebra. We say that $\mathbf{A}$ is \emph{finite} if $A$, the underlying $S$-sorted set of $\mathbf{A}$, is finite.
\end{definition}

We next define when a subset $X$ of the underlying $S$-sorted set $A$ of a $\Sigma$-algebra $\mathbf{A}$ is closed under an operation of $\mathbf{A}$, as well as when $X$ is a subalgebra of $\mathbf{A}$.

\begin{definition}\label{Subalg}
Let $\mathbf{A}$ be a $\Sigma$-algebra and $X\subseteq A$. Let $\sigma$ be such that $\sigma\colon w\mor s$, i.e., a formal operation in $\Sigma_{w,s}$. We say that $X$ is \emph{closed under the operation} $F_{\sigma}\colon A_{w}\mor A_{s}$ if, for every $a\in X_{w}$, $F_{\sigma}(a)\in X_{s}$. We say that $X$ is a \emph{subalgebra} of $\mathbf{A}$ if $X$ is closed under the operations of $\mathbf{A}$.
%We denote by $\mathrm{Sub}(\mathbf{A})$ the set of all subalgebras of $\mathbf{A}$ (which is an algebraic closure system on $A$) and by  $\mathbf{Sub}(\mathbf{A})$ the algebraic lattice $(\mathrm{Sub}(\mathbf{A}),\subseteq)$.
We also say, equivalently, that a $\Sigma$-algebra $\mathbf{X}$ is a \emph{subalgebra} of $\mathbf{A}$ if $X\subseteq A$ and the canonical embedding of $X$ into $A$ determines an embedding of $\mathbf{X}$ into $\mathbf{A}$.
\end{definition}

We now recall the concept of product of a family of $\Sigma$-algebras.

\begin{definition}
Let $I$ be a set in $\ensuremath{\boldsymbol{\mathcal{U}}}$ and $(\mathbf{A}^{i})_{i\in I}$ an $I$-indexed family of $\Sigma$-algebras, where, for every $i\in I$, $\mathbf{A}^{i} = (A^{i},F^{i})$. The \emph{product} of $(\mathbf{A}^{i})_{i\in I}$, denoted by $\prod_{i\in I}\mathbf{A}^{i}$, is the $\Sigma$-algebra $(\prod_{i\in I}A^{i},F)$ where, for every $\sigma\colon w\mor s$ in $\Sigma$, $F_{\sigma}$ sends $(a_{\alpha})_{\alpha\in \lvert w\rvert}$ in $(\prod_{i\in I}A^{i})_{w}$ to $(F^{i}_{\sigma}((a_{\alpha}(i))_{\alpha\in \lvert w\rvert}))_{i\in I}$ in $\prod_{i\in I}A^{i}_{s}$.
%is defined as follows:%
%$$
%F_{\sigma}
%\nfunction
%{(\prod_{i\in I}A^{i})_{w}}
%{\prod_{i\in I}A^{i}_{s}}
%{(a_{\alpha})_{\alpha\in \lvert w\rvert}}
%{(F^{i}_{\sigma}((a_{\alpha}(i))_{\alpha\in \lvert w\rvert}))_{i\in I}}
%$$
For every $i\in I$, the \emph{$i$-th canonical projection}, $\mathrm{pr}^{i} = (\mathrm{pr}^{i}_{s})_{s\in S}$, is the homomorphism from $\prod_{i\in I}\mathbf{A}^{i}$ to $\mathbf{A}^{i}$ which, for every $s\in S$, sends $(a_{i})_{i\in I}$ in $\prod_{i\in I}A^{i}_{s}$ to $a_{i}$ in $A^{i}_{s}$.
%as follows:
%$$
%\mathrm{pr}^{i}_{s}
%\nfunction
%{\prod_{i\in I}A^{i}_{s}}
%{A^{i}_{s}}
%{(a_{i})_{i\in I}}
%{a_{i}}
%$$
On the other hand, if $\mathbf{B}$ is a $\Sigma$-algebra and $(f^{i})_{i\in I}$ an $I$-indexed family of homomorphisms, where, for every $i\in I$, $f^{i}$ is a homomorphism from $\mathbf{B}$ to $\mathbf{A}^{i}$, then we denote by $\left<f^{i}\right>_{i\in I}$ the unique homomorphism $f$ from $\mathbf{B}$ to $\prod_{i\in I}\mathbf{A}^{i}$ such that, for every $i\in I$, $\mathrm{pr}^{i}\circ f = f^{i}$.
\end{definition}

In the following definition of the concept of family of $\Sigma$-algebras with constant support use will be made of the concept of an $I$-indexed family of $S$-sorted sets with constant support.

\begin{definition}
Let $I$ be a set and $(\mathbf{A}^{i})_{i\in I}$ an $I$-indexed family of $\Sigma$-algebras. We say that $(\mathbf{A}^{i})_{i\in I}$ is a family of $\Sigma$-algebras with \emph{constant support} if $(A^{i})_{i\in I}$, the underlying family of $S$-sorted sets of $(\mathbf{A}^{i})_{i\in I}$, is a family of $S$-sorted sets with constant support.
\end{definition}

Our next goal is to define the concepts of congruence on a $\Sigma$-algebra and of quotient of a $\Sigma$-algebra by a congruence on it. Moreover, we recall the notion of kernel of a homomorphism between $\Sigma$-algebras and the universal property of the quotient of a $\Sigma$-algebra by a congruence on it.

\begin{definition}
Let $\mathbf{A}$ be a $\Sigma$-algebra and $\Phi$ an $S$-sorted equivalence on $A$. We say that $\Phi$ is an
$S$-\emph{sorted congruence on} (or, to abbreviate, a \emph{congruence on}) $\mathbf{A}$ if, for every $(w,s)\in (S^{\star}-\{\lambda\})\times S$, every $\sigma\colon w\mor s$,
and every $a,b\in A_{w}$, if, for every $i\in \lvert w\rvert$, $(a_{i}, b_{i})\in\Phi_{w_{i}}$, then $(F_{\sigma}(a), F_{\sigma}(b))\in \Phi_{s}$.
%we have that
%$$
%\frac
%{\forall i\in \lvert w\rvert\mathrm{, }\,\, (a_{i}, b_{i})\in\Phi_{w_{i}} }
%{(F_{\sigma}(a), F_{\sigma}(b))\in \Phi_{s}}\cdot
%$$
%We denote by $\mathrm{Cgr}(\mathbf{A})$ the set of all $S$-sorted congruences on $\mathbf{A}$ (which is an algebraic closure system on $A\times A$), by $\mathbf{Cgr}(\mathbf{A})$ the algebraic lattice  $(\mathrm{Cgr}(\mathbf{A}),\subseteq)$, by $\nabla^{\mathbf{A}}$ the greatest element of $\mathbf{Cgr}(\mathbf{A})$, and by $\Delta^{\mathbf{A}}$ the least element of $\mathbf{Cgr}(\mathbf{A})$.
\end{definition}

\begin{definition}
Let $\mathbf{A}$ be a $\Sigma$-algebra and $\Phi\in\mathrm{Cgr}(\mathbf{A})$. Then $\mathbf{A}/\Phi$, the \emph{quotient $\Sigma$-algebra} of $\mathbf{A}$ \emph{by} $\Phi$, is the $\Sigma$-algebra $(A/\Phi,F^{\mathbf{A}/\Phi})$, where, for every $\sigma\colon w\mor s$, the operation $F_{\sigma}^{\mathbf{A}/\Phi}\colon (A/\Phi)_{w}\mor A_{s}/\Phi_{s}$, also denoted, to simplify, by $F_{\sigma}$, sends $([a_{i}]_{\Phi_{w_{i}}})_{i\in\lvert w\rvert}$ in $(A/\Phi)_{w}$ to $[F_{\sigma}((a_{i})_{i\in \lvert w\rvert})]_{\Phi_{s}}$ in $A_{s}/\Phi_{s}$.  %
%$$
%F_{\sigma}
%\nfunction
%{(A/\Phi)_{w}} {A_{s}/\Phi_{s}}
%{([a_{i}]_{\Phi_{w_{i}}})_{i\in\lvert w\rvert}}
%{[F_{\sigma}((a_{i})_{i\in \lvert w\rvert})]_{\Phi_{s}}}
%$$
And $\mathrm{pr}^{\Phi}\colon \mathbf{A}\mor \mathbf{A}/\Phi$, the \emph{canonical projection from} $\mathbf{A}$ \emph{to} $\mathbf{A}/\Phi$, is the homomorphism determined by the $S$-sorted mapping $\mathrm{pr}^{\Phi}$ from $A$ to $A/\Phi$.
\end{definition}

\begin{proposition}
If $f$ is a homomorphism from $\mathbf{A}$ to $\mathbf{B}$, then $\mathrm{Ker}(f)\in\mathrm{Cgr}(\mathbf{A})$. Moreover, given a $\Sigma$-algebra $\mathbf{A}$ and a congruence $\Phi$ on $\mathbf{A}$, the pair $(\mathrm{pr}^{\Phi},\mathbf{A}/\Phi)$ is such that (1) $\mathrm{Ker}(\mathrm{pr}^{\Phi}) = \Phi$, and (2) \emph{(universal property)} for every homomorphism $f\colon\mathbf{A}\mor \mathbf{B}$, if $\Phi\subseteq \mathrm{Ker}(f)$, then there exists a unique homomorphism $\mathrm{p}^{\Phi,\mathrm{Ker}(f)}$ from $\mathbf{A}/\Phi$ to $\mathbf{B}$ such that $f = \mathrm{p}^{\Phi,\mathrm{Ker}(f)}\circ \mathrm{pr}^{\Phi}$.
\end{proposition}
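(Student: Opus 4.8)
The plan is to reduce both parts to the corresponding statements already recorded for the topos $\mathbf{Set}^{S}$, and then to verify the extra $\Sigma$-algebraic clauses by a straightforward componentwise computation.

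For the first assertion I would invoke the proposition on $S$-sorted mappings to get that $\mathrm{Ker}(f)$ is an $S$-sorted equivalence on $A$; it then remains only to check the congruence clause. So fix $(w,s)\in(S^{\star}-\{\lambda\})\times S$, a formal operation $\sigma\colon w\mor s$, and $a,b\in A_{w}$ with $(a_{i},b_{i})\in\mathrm{Ker}(f)_{w_{i}}$ for every $i\in\lvert w\rvert$, i.e.\ $f_{w_{i}}(a_{i})=f_{w_{i}}(b_{i})$. Then $f_{w}(a)=(f_{w_{i}}(a_{i}))_{i\in\lvert w\rvert}=(f_{w_{i}}(b_{i}))_{i\in\lvert w\rvert}=f_{w}(b)$, and, since $f$ is a homomorphism, $f_{s}(F_{\sigma}(a))=G_{\sigma}(f_{w}(a))=G_{\sigma}(f_{w}(b))=f_{s}(F_{\sigma}(b))$, i.e.\ $(F_{\sigma}(a),F_{\sigma}(b))\in\mathrm{Ker}(f)_{s}$. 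Hence $\mathrm{Ker}(f)\in\mathrm{Cgr}(\mathbf{A})$.

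For the ``moreover'' part, item~(1) is immediate from the analogous statement for the $S$-sorted quotient set, because the underlying $S$-sorted mapping of $\mathrm{pr}^{\Phi}\colon\mathbf{A}\mor\mathbf{A}/\Phi$ is precisely the canonical projection $A\mor A/\Phi$, whose kernel is $\Phi$. For item~(2), given a homomorphism $f\colon\mathbf{A}\mor\mathbf{B}$ with $\Phi\subseteq\mathrm{Ker}(f)$, I would first apply the universal property at the level of $S$-sorted sets to obtain the unique $S$-sorted mapping $\mathrm{p}^{\Phi,\mathrm{Ker}(f)}\colon A/\Phi\mor B$ with $f=\mathrm{p}^{\Phi,\mathrm{Ker}(f)}\circ\mathrm{pr}^{\Phi}$, and then show that it underlies a $\Sigma$-homomorphism. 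Writing $p=\mathrm{p}^{\Phi,\mathrm{Ker}(f)}$ and fixing $\sigma\colon w\mor s$ together with $([a_{i}]_{\Phi_{w_{i}}})_{i\in\lvert w\rvert}\in(A/\Phi)_{w}$, the verification amounts to the chain
\begin{align*}
p_{s}\bigl(F_{\sigma}^{\mathbf{A}/\Phi}(([a_{i}]_{\Phi_{w_{i}}})_{i\in\lvert w\rvert})\bigr)
&=p_{s}\bigl([F_{\sigma}((a_{i})_{i\in\lvert w\rvert})]_{\Phi_{s}}\bigr)
=f_{s}\bigl(F_{\sigma}((a_{i})_{i\in\lvert w\rvert})\bigr)\\
&=G_{\sigma}\bigl(f_{w}((a_{i})_{i\in\lvert w\rvert})\bigr)
=G_{\sigma}\bigl((p_{w_{i}}([a_{i}]_{\Phi_{w_{i}}}))_{i\in\lvert w\rvert}\bigr)
=G_{\sigma}\bigl(p_{w}(([a_{i}]_{\Phi_{w_{i}}})_{i\in\lvert w\rvert})\bigr),
\end{align*}
where the first equality is the definition of $F_{\sigma}^{\mathbf{A}/\Phi}$, the second and the fourth use the componentwise identity $f=p\circ\mathrm{pr}^{\Phi}$, and the third uses that $f$ is a homomorphism; the case $w=\lambda$ is the specialization of this to the empty tuple. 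Finally, uniqueness of $p$ as a homomorphism follows at once from its uniqueness as an $S$-sorted mapping, since a homomorphism is completely determined by its underlying $S$-sorted mapping.

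There is no genuine obstacle: the proposition is the many-sorted transcription of a standard fact, and the only points requiring care are keeping the bookkeeping of arities straight — the products $A_{w}$, the induced mapping $f_{w}$, the componentwise projection $\mathrm{pr}^{\Phi}$ — and isolating the empty word $\lambda$, for which the quotient operation is defined on the one-element set $(A/\Phi)_{\lambda}=1$ and all the identities above hold trivially.
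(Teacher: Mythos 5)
Your proof is correct and is exactly the routine componentwise verification that the paper leaves implicit (this proposition is stated without proof among the recalled preliminaries). The reduction to the $S$-sorted-set level facts followed by the check of the $\Sigma$-algebraic clauses, including the treatment of the empty word $\lambda$, is the intended argument.
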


%\begin{proposition}
%Let $f$ be a homomorphism from $\mathbf{A}$ to $\mathbf{B}$. Then $\mathrm{Ker}(f)$ is a congruence on $\mathbf{A}$.
%\end{proposition}

%\begin{proposition}
%Let $\mathbf{A}$ be a $\Sigma$-algebra and $\Phi\in \mathrm{Cgr}(\mathbf{A})$. Then the pair $(\mathrm{pr}^{\Phi},\mathbf{A}/\Phi)$ is such that:
%\begin{enumerate}
%\item $\mathrm{Ker}(\mathrm{pr}^{\Phi}) = \Phi$.
%\item \emph{(Universal property)} For every homomorphism $f\colon\mathbf{A}\mor \mathbf{B}$, if $\Phi\subseteq \mathrm{Ker}(f)$, then there exists a unique homomorphism $\mathrm{p}^{\Phi,\mathrm{Ker}(f)}$ from $\mathbf{A}/\Phi$ to $\mathbf{B}$ such that $f = \mathrm{p}^{\Phi,\mathrm{Ker}(f)}\circ \mathrm{pr}^{\Phi}$.
%\end{enumerate}
%\end{proposition}

\begin{proposition}%[Equalizers]
Let $f,g\colon\mathbf{A}\mor \mathbf{B}$ be two homomorphisms of $\Sigma$-algebras. Then the pair $(\mathbf{Eq}(f,g),\mathrm{eq}(f,g))$, with $\mathbf{Eq}(f,g)$ the subalgebra of $\mathbf{A}$ determined by the $S$-sorted set
$\mathrm{Eq}(f,g)=(\{a\in A_{s}\mid f_{s}(a)=g_{s}(a)\})_{s\in S}$, and $\mathrm{eq}(f,g)$ the canonical embedding in $\mathbf{A}$, is an equalizer of $f$ and $g$ in $\mathbf{Alg}(\Sigma)$.
\end{proposition}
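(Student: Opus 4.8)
The plan is to check directly the three things required of an equalizer in $\mathbf{Alg}(\Sigma)$: that $\mathbf{Eq}(f,g)$ really is a $\Sigma$-algebra for which $\mathrm{eq}(f,g)$ is a homomorphism, that $f\circ\mathrm{eq}(f,g)=g\circ\mathrm{eq}(f,g)$, and that the pair $(\mathbf{Eq}(f,g),\mathrm{eq}(f,g))$ is universal among homomorphisms into $\mathbf{A}$ that coequalize... that \emph{equalize} $f$ and $g$. Write $\mathbf{A}=(A,F)$, $\mathbf{B}=(B,G)$. Throughout I would lean on the fact, recorded in the remark following the definition of equalizer, that $(\mathrm{Eq}(f,g),\mathrm{eq}(f,g))$ is already an equalizer of the underlying $S$-sorted mappings of $f$ and $g$ in $\mathbf{Set}^{S}$; the whole point is to transport this along the forgetful functor $\mathbf{Alg}(\Sigma)\mor\mathbf{Set}^{S}$.

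First I would verify that the $S$-sorted set $\mathrm{Eq}(f,g)$ is a subalgebra of $\mathbf{A}$ in the sense of Definition~\ref{Subalg}. Fix $\sigma\colon w\mor s$ and $a\in \mathrm{Eq}(f,g)_{w}$, so $f_{w_{i}}(a_{i})=g_{w_{i}}(a_{i})$ for every $i\in\lvert w\rvert$, whence $f_{w}(a)=g_{w}(a)$. The homomorphism condition for $f$ and then for $g$ yields $f_{s}(F_{\sigma}(a))=G_{\sigma}(f_{w}(a))=G_{\sigma}(g_{w}(a))=g_{s}(F_{\sigma}(a))$, so $F_{\sigma}(a)\in \mathrm{Eq}(f,g)_{s}$. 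Hence $\mathbf{Eq}(f,g)$ carries the (unique) structure of $\Sigma$-algebra whose operations are the restrictions of those of $\mathbf{A}$ and for which $\mathrm{eq}(f,g)$ is a homomorphism. The identity $f\circ\mathrm{eq}(f,g)=g\circ\mathrm{eq}(f,g)$ is inherited, componentwise, from the corresponding identity for the underlying $S$-sorted mappings.

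For the universal property, let $h\colon\mathbf{C}\mor\mathbf{A}$, with $\mathbf{C}=(C,H)$, be a homomorphism satisfying $f\circ h=g\circ h$. By the equalizer property in $\mathbf{Set}^{S}$ there is a unique $S$-sorted mapping $u$ from $C$ to $\mathrm{Eq}(f,g)$ with $\mathrm{eq}(f,g)\circ u=h$; since $\mathrm{eq}(f,g)$ is componentwise injective, hence a monomorphism in $\mathbf{Alg}(\Sigma)$, uniqueness of $u$ \emph{as a homomorphism} is automatic, and it only remains to see that $u$ is a homomorphism at all. For $\sigma\colon w\mor s$ and $c\in C_{w}$, using in turn injectivity of $\mathrm{eq}(f,g)_{s}$, the homomorphism property of $h$, the relation $\mathrm{eq}(f,g)\circ u=h$, and that $\mathrm{eq}(f,g)$ is a homomorphism whose source has the restricted operations, I would compute
\[
\mathrm{eq}(f,g)_{s}(u_{s}(H_{\sigma}(c)))
  = h_{s}(H_{\sigma}(c))
  = F_{\sigma}(h_{w}(c))
  = F_{\sigma}(\mathrm{eq}(f,g)_{w}(u_{w}(c)))
  = \mathrm{eq}(f,g)_{s}\bigl(F_{\sigma}^{\mathbf{Eq}(f,g)}(u_{w}(c))\bigr),
\]
and then cancel $\mathrm{eq}(f,g)_{s}$ to get $u_{s}(H_{\sigma}(c))=F_{\sigma}^{\mathbf{Eq}(f,g)}(u_{w}(c))$, which is exactly the homomorphism condition for $u$.

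I do not anticipate any real obstacle: this is the standard pattern by which the forgetful functor $\mathbf{Alg}(\Sigma)\mor\mathbf{Set}^{S}$ creates (small) limits. The two points that genuinely do the work are that closure of $\mathrm{Eq}(f,g)$ under the operations of $\mathbf{A}$ is precisely what is needed to turn it into a $\Sigma$-algebra, and that componentwise injectivity of the embedding both upgrades the set-level mediating morphism to a homomorphism and forces its uniqueness; everything else is bookkeeping.
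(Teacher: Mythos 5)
Your proof is correct and is exactly the standard argument (closure of $\mathrm{Eq}(f,g)$ under the operations, restriction of the structure, and lifting of the set-level mediating morphism along the componentwise-injective embedding); the paper states this proposition without proof, as one of the facts recalled in its preliminaries, so there is no divergence to report. The only cosmetic remark is that the corestriction $u$ can be seen to be a homomorphism directly, since $u_{s}(H_{\sigma}(c))=h_{s}(H_{\sigma}(c))=F_{\sigma}(h_{w}(c))=F_{\sigma}^{\mathbf{Eq}(f,g)}(u_{w}(c))$ by definition of the restricted operations, but your cancellation argument is equally valid.
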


We next define the concept of projective system of $\Sigma$-algebras and state the existence of the projective limit of a projective system of $\Sigma$-algebras. But before we start doing all that we recall that every preordered set $\mathbf{I} = (I,\leq)$ has a canonically associated category, also denoted by $\mathbf{I}$, whose set of objects is $I$ and whose set of morphisms is $\leq$, thus, for every $i,j\in I$, $\mathrm{Hom}(i,j) = \{(i,j)\}$, if $(i,j)\in \leq$, and $\mathrm{Hom}(i,j) = \varnothing$, otherwise.

\begin{definition}
Let $\mathbf{I}$ be a preordered set. A \emph{projective system} of $\Sigma$-algebras relative to $\mathbf{I}$ is a contravariant functor from (the category canonically associated to) $\mathbf{I}$ to $\mathbf{Alg}(\Sigma)$, i.e.,
an ordered pair $\boldsymbol{\mathcal{A}} = ((\mathbf{A}^{i})_{i\in I},(f^{j,i})_{(i,j)\in \leq})$ such that:
\begin{enumerate}
\item For every $i\in I$, $\mathbf{A}^{i}$ is a $\Sigma$-algebra.
\item For every $(i,j)\in\leq$, $f^{j,i}\colon\mathbf{A}^{j}\mor \mathbf{A}^{i}$.
\item For every $i\in I$, $f^{i,i} = \mathrm{id}_{\mathbf{A}^{i}}$.
\item For every $i,j,k\in I$, if $(i,j)\in \leq$ and $(j,k)\in
\leq$, then the following diagram commutes
$$\xymatrix@C=40pt@R=40pt{
\mathbf{A}^{k} \ar[r]^{f^{k,j}}
\ar[dr]_{f^{k,i}}
& \mathbf{A}^{j} \ar[d]^{f^{j,i}} \\
& \mathbf{A}^{i}
}
$$
\end{enumerate}
The homomorphisms $f^{j,i}\colon A^{j}\mor A^{i}$ are called the \emph{transition homomorphisms} of the projective system of $\Sigma$-algebras $\boldsymbol{\mathcal{A}}$ relative to $\mathbf{I}$.

A \emph{projective cone to} $\boldsymbol{\mathcal{A}}$ is an ordered pair $(\mathbf{L},(f^{i})_{i\in I})$ where $\mathbf{L}$ is a $\Sigma$-algebra and, for every $i\in I$, $f^{i}\colon \mathbf{L}\mor \mathbf{A}^{i}$, such that, for every $(i,j)\in \leq$, $f^{i} = f^{j,i}\circ f^{j}$. On the other hand, if $(\mathbf{L},(f^{i})_{i\in I})$ and $(\mathbf{M},(g^{i})_{i\in I})$ are two projective cones to $\boldsymbol{\mathcal{A}}$, then a \emph{morphism} from $(\mathbf{L},(f^{i})_{i\in I})$ to $(\mathbf{M},(g^{i})_{i\in I})$ is a homomorphism $h$ from $\mathbf{L}$ to $\mathbf{M}$ such that, for every $i\in I$, $f^{i} = g^{i}\circ h$.
A \emph{projective limit} of $\boldsymbol{\mathcal{A}}$ is a projective cone $(\mathbf{L},(f^{i})_{i\in I})$ to  $\boldsymbol{\mathcal{A}}$ such that, for every projective cone $(\mathbf{M},(g^{i})_{i\in I})$ to $\boldsymbol{\mathcal{A}}$, there exits a unique morphism from $(\mathbf{M},(g^{i})_{i\in I})$ to $(\mathbf{L},(f^{i})_{i\in I})$.
\end{definition}

%\begin{proposition}
%Let $(\mathbf{I},\mathcal{A})$ be a projective system of $S$-sorted sets. Then we denote by
%$\varprojlim(\mathbf{I},\mathcal{A})$ the $S$-sorted set defined, for every $s\in S$, as:
%$$
%\varprojlim(\mathbf{I},\mathcal{A})_{s} =
%  \{x\in \textstyle\prod_{i\in I}A^{i}_{s}\mid \forall\, (i,j)\in \preccurlyeq\,
%  (f^{j,i}(\mathrm{pr}^{j}_{s}(x)) = \mathrm{pr}^{i}_{s}(x))\}.
%$$
%
%On the other hand, for every $i\in I$, let $f^{i}$ be the composition
%$\mathrm{pr}^{i}\circ\mathrm{inc}^{\varprojlim(\mathbf{I},\mathcal{A})}$, of the canonical embedding $\mathrm{inc}^{\varprojlim(\mathbf{I},\mathcal{A})}$ of $\varprojlim(\mathbf{I},\mathcal{A})$ into $\prod_{i\in I}A^{i}$ and the canonical projection $\mathrm{pr}^{i}$ from $\prod_{i\in I}A^{i}$ to $A^{i}$.
%
%Then the pair $(\varprojlim(\mathbf{I},\mathcal{A}),(f^{i})_{i\in I})$ is a projective limit of $(\mathbf{I},\mathcal{A})$.
%\end{proposition}

\begin{proposition}
Let $\boldsymbol{\mathcal{A}}$ be a projective system of $\Sigma$-algebras relative to $\mathbf{I}$. Then we denote by
$\varprojlim_{\mathbf{I}}\boldsymbol{\mathcal{A}}$, the $\Sigma$-algebra determined by the subalgebra $\varprojlim_{\mathbf{I}}\mathcal{A}$ of $\prod_{i\in I}\mathbf{A}^{i}$, where  $\varprojlim_{\mathbf{I}}\mathcal{A}$ is defined as:
$$
  (\{x\in \textstyle\prod_{i\in I}A^{i}_{s}\mid \forall\, (i,j)\in \leq\,
  (f^{j,i}(\mathrm{pr}^{j}_{s}(x)) = \mathrm{pr}^{i}_{s}(x))\})_{s\in S}.
$$
On the other hand, for every $i\in I$, let $f^{i}$ be the composition
$\mathrm{pr}^{i}\circ\mathrm{inc}^{\varprojlim_{\mathbf{I}}\mathcal{A}}$, of the canonical embedding $\mathrm{inc}^{\varprojlim_{\mathbf{I}}\mathcal{A}}$ of $\varprojlim_{\mathbf{I}}\mathcal{A}$ into $\prod_{i\in I}A^{i}$ and the canonical projection $\mathrm{pr}^{i}$ from $\prod_{i\in I}A^{i}$ to $A^{i}$.
Then, for every $i\in I$, $f^{i}$ is a homomorphism from $\varprojlim_{\mathbf{I}}\boldsymbol{\mathcal{A}}$ to $\mathbf{A}^{i}$
and the pair $(\varprojlim_{\mathbf{I}}\boldsymbol{\mathcal{A}},(f^{i})_{i\in I})$ is a projective limit of $\boldsymbol{\mathcal{A}}$.
\end{proposition}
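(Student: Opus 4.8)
The plan is to follow the four obligations implicit in the statement: (i) show that $\varprojlim_{\mathbf{I}}\mathcal{A}$ is closed under the operations of $\prod_{i\in I}\mathbf{A}^{i}$, so that it really does determine a subalgebra, denoted $\varprojlim_{\mathbf{I}}\boldsymbol{\mathcal{A}}$; (ii) check that each $f^{i}$ is a homomorphism; (iii) check that $(\varprojlim_{\mathbf{I}}\boldsymbol{\mathcal{A}},(f^{i})_{i\in I})$ is a projective cone to $\boldsymbol{\mathcal{A}}$; and (iv) establish the universal property.

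For (i) the quickest route is to note that, for each $(i,j)\in{\leq}$, the $S$-sorted set $\mathrm{Eq}(f^{j,i}\circ\mathrm{pr}^{j},\mathrm{pr}^{i})$ underlies, by the preceding proposition on equalizers in $\mathbf{Alg}(\Sigma)$, a subalgebra of $\prod_{i\in I}\mathbf{A}^{i}$, and that $\varprojlim_{\mathbf{I}}\mathcal{A} = \bigcap_{(i,j)\in{\leq}}\mathrm{Eq}(f^{j,i}\circ\mathrm{pr}^{j},\mathrm{pr}^{i})$; since an arbitrary intersection of subalgebras of a $\Sigma$-algebra is again a subalgebra, $\varprojlim_{\mathbf{I}}\mathcal{A}$ is a subalgebra. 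Alternatively one argues directly: given $\sigma\colon w\mor s$ and $(a_{\alpha})_{\alpha\in\lvert w\rvert}$ with $a_{\alpha}\in(\varprojlim_{\mathbf{I}}\mathcal{A})_{w_{\alpha}}$, one computes, for every $(i,j)\in{\leq}$, that $f^{j,i}_{s}(\mathrm{pr}^{j}_{s}(F_{\sigma}((a_{\alpha})_{\alpha}))) = f^{j,i}_{s}(F^{j}_{\sigma}((a_{\alpha}(j))_{\alpha}))$ by the definition of the product operation, that this equals $F^{i}_{\sigma}((f^{j,i}_{w_{\alpha}}(a_{\alpha}(j)))_{\alpha})$ because $f^{j,i}$ is a homomorphism, and that the latter equals $F^{i}_{\sigma}((a_{\alpha}(i))_{\alpha}) = \mathrm{pr}^{i}_{s}(F_{\sigma}((a_{\alpha})_{\alpha}))$ because each $a_{\alpha}$ lies in the limit; hence $F_{\sigma}((a_{\alpha})_{\alpha})\in(\varprojlim_{\mathbf{I}}\mathcal{A})_{s}$. (The case $w=\lambda$ is subsumed, since homomorphisms preserve the operations of rank $(\lambda,s)$.)

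Part (ii) is immediate: $f^{i} = \mathrm{pr}^{i}\circ\mathrm{inc}^{\varprojlim_{\mathbf{I}}\mathcal{A}}$ is a composite of the embedding of the subalgebra, which is a homomorphism by the definition of subalgebra, and the canonical projection $\mathrm{pr}^{i}$, which is a homomorphism by the definition of the product; hence $f^{i}$ is a homomorphism. For (iii), given $(i,j)\in{\leq}$ and $x\in(\varprojlim_{\mathbf{I}}\mathcal{A})_{s}$, the defining condition of $\varprojlim_{\mathbf{I}}\mathcal{A}$ reads $f^{j,i}_{s}(\mathrm{pr}^{j}_{s}(x)) = \mathrm{pr}^{i}_{s}(x)$, i.e. $(f^{j,i}\circ f^{j})_{s}(x) = f^{i}_{s}(x)$, so $f^{i} = f^{j,i}\circ f^{j}$ and the pair is a projective cone to $\boldsymbol{\mathcal{A}}$.

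For the universal property (iv), start from an arbitrary projective cone $(\mathbf{M},(g^{i})_{i\in I})$ to $\boldsymbol{\mathcal{A}}$ and form $\langle g^{i}\rangle_{i\in I}\colon\mathbf{M}\mor\prod_{i\in I}\mathbf{A}^{i}$, the unique homomorphism with $\mathrm{pr}^{i}\circ\langle g^{i}\rangle_{i\in I} = g^{i}$ for every $i\in I$. Its image is contained in $\varprojlim_{\mathbf{I}}\mathcal{A}$: for $m\in M_{s}$ and $(i,j)\in{\leq}$ one has $f^{j,i}_{s}(\mathrm{pr}^{j}_{s}(\langle g^{i}\rangle_{s}(m))) = f^{j,i}_{s}(g^{j}_{s}(m)) = g^{i}_{s}(m) = \mathrm{pr}^{i}_{s}(\langle g^{i}\rangle_{s}(m))$, using the cone condition for $(\mathbf{M},(g^{i})_{i\in I})$. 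Hence $\langle g^{i}\rangle_{i\in I}$ corestricts to a homomorphism $h\colon\mathbf{M}\mor\varprojlim_{\mathbf{I}}\boldsymbol{\mathcal{A}}$ with $\mathrm{inc}^{\varprojlim_{\mathbf{I}}\mathcal{A}}\circ h = \langle g^{i}\rangle_{i\in I}$, and then $f^{i}\circ h = \mathrm{pr}^{i}\circ\mathrm{inc}^{\varprojlim_{\mathbf{I}}\mathcal{A}}\circ h = \mathrm{pr}^{i}\circ\langle g^{i}\rangle_{i\in I} = g^{i}$, so $h$ is a morphism of cones; for uniqueness, if $h'$ also satisfies $f^{i}\circ h' = g^{i}$ for all $i$, then $\mathrm{pr}^{i}\circ(\mathrm{inc}^{\varprojlim_{\mathbf{I}}\mathcal{A}}\circ h') = g^{i}$ for all $i$, whence $\mathrm{inc}^{\varprojlim_{\mathbf{I}}\mathcal{A}}\circ h' = \langle g^{i}\rangle_{i\in I} = \mathrm{inc}^{\varprojlim_{\mathbf{I}}\mathcal{A}}\circ h$, and since $\mathrm{inc}^{\varprojlim_{\mathbf{I}}\mathcal{A}}$ is a monomorphism, $h' = h$. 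The only step that requires genuine care is (i), the closure of the limit set under the structure; everything else reduces to the universal properties of products and equalizers already available in $\mathbf{Alg}(\Sigma)$. No many-sorted subtlety intervenes here, and in particular the support and constant-support hypotheses play no role in this proposition.
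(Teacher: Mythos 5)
Your proof is correct. The paper states this proposition without proof (it belongs to the recalled preliminaries of Section 2), and your verification --- closure of $\varprojlim_{\mathbf{I}}\mathcal{A}$ under the operations of the product via the homomorphism property of the transition maps (equivalently, as an intersection of equalizer subalgebras), followed by the cone condition and the universal property obtained by corestricting $\langle g^{i}\rangle_{i\in I}$ --- is exactly the standard construction of projective limits in $\mathbf{Alg}(\Sigma)$ from products and equalizers, with all details in order; you are also right that no support hypothesis is needed here.
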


%The projective limit $\varprojlim_{\mathbf{I}}\boldsymbol{\mathcal{A}}$---along with the homomorphisms $f^{i}$ from  $\varprojlim_{\mathbf{I}}\boldsymbol{\mathcal{A}}$ to $\mathbf{A}^{i}$---of a projective system $\boldsymbol{\mathcal{A}}$ of $\Sigma$-algebras relative to $\mathbf{I}$ can be interpreted as the optimal way of obtaining the equalizer of an equation (represented by a parallel pair of mappings between two adequate Cartesian products) obtained from the transition homomorphisms  $f^{j,i}\colon \mathbf{A}^{j}\mor \mathbf{A}^{i}$ of the projective system $\boldsymbol{\mathcal{A}}$.

We next define the concept of inductive system of $\Sigma$-algebras and state the existence of the inductive limit of an inductive system of $\Sigma$-algebras.

\begin{definition}
Let $\mathbf{I}$ be an upward directed preordered set. An \emph{inductive system} of $\Sigma$-algebras relative to $\mathbf{I}$ is a covariant functor from (the category canonically associated to) $\mathbf{I}$ to $\mathbf{Alg}(\Sigma)$, i.e., an ordered pair $\boldsymbol{\mathcal{A}} = ((\mathbf{A}^{i})_{i\in I},(f^{i,j})_{(i,j)\in\leq})$ such that
\begin{enumerate}
\item For every $i\in I$, $\mathbf{A}^{i}$ is a $\Sigma$-algebra.

\item For every $(i,j)\in\leq$, $f^{i,j}\colon\mathbf{A}^{i}\mor \mathbf{A}^{j}$.

\item For every $i\in I$, $f^{i,i}=\mathrm{id}_{\mathbf{A}^{i}}$.

\item For every $i,j,k\in I$, if $i\leq j\leq k$, then the
following diagram commutes
$$\xymatrix@C=40pt@R=40pt{
\mathbf{A}^{i}
\ar[r]^{f^{i,j}}
\ar[rd]_{f^{i,k}} &
\mathbf{A}^{j}
\ar[d]^{f^{j,k}} \\
&
\mathbf{A}^{k}
}
$$
\end{enumerate}
The homomorphisms $f^{i,j}$ are called \emph{transition homomorphisms} of the inductive system of $\Sigma$-algebras $\boldsymbol{\mathcal{A}}$ relative to $\mathbf{I}$.

An \emph{inductive cone from} $\boldsymbol{\mathcal{A}}$ is an ordered pair $(\mathbf{L},(f^{i})_{i\in I})$ where $\mathbf{L}$ is a $\Sigma$-algebra and, for every $i\in I$, $f^{i}\colon \mathbf{A}^{i}\mor \mathbf{L}$, such that, for every $(i,j)\in \leq$, $f^{i} = f^{j}\circ f^{i,j}$. On the other hand, if $(\mathbf{L},(f^{i})_{i\in I})$ and $(\mathbf{M},(g_{i})_{i\in I})$ are two inductive cones from $\boldsymbol{\mathcal{A}}$, then a \emph{morphism} from $(\mathbf{L},(f^{i})_{i\in I})$ to  $(\mathbf{M},(g_{i})_{i\in I})$ is a homomorphism $h$ from $\mathbf{L}$ to $\mathbf{M}$ such that, for every $I\in I$, $g^{i} = h\circ f^{i}$.
An \emph{inductive limit} of $\boldsymbol{\mathcal{A}}$ is an inductive cone $(\mathbf{L},(f^{i})_{i\in I})$ from  $\boldsymbol{\mathcal{A}}$ such that, for every inductive cone $(\mathbf{M},(g^{i})_{i\in I})$ from $\boldsymbol{\mathcal{A}}$, there exits a unique morphism from $(\mathbf{L},(f^{i})_{i\in I})$ to $(\mathbf{M},(g^{i})_{i\in I})$.
\end{definition}

\begin{proposition}
Let $\boldsymbol{\mathcal{A}}$ be an inductive system of $\Sigma$-algebras relative to $\mathbf{I}$. Then we denote by
$\varinjlim_{\mathbf{I}}\boldsymbol{\mathcal{A}}$ the $\Sigma$-algebra which has as underlying $S$-sorted set
$\coprod_{i\in I}A^{i}/\Phi^{(\mathbf{I},\boldsymbol{\mathcal{A}})}$, where $\Phi^{(\mathbf{I},\boldsymbol{\mathcal{A}})}$ is the
$S$-equivalence on $\coprod_{i\in I}A^{i}$ defined as:
$$
\textstyle
\big(\{\,((a,i),(b,j))\in \big(\coprod_{i\in I}A^{i}_{s}\big)^{2}
\mid \exists k\in I ( k\geq i, j \And f^{i,k}_{s}(a) =
f^{j,k}_{s}(b)\,\}\big)_{s\in S},
$$
and, for every $(w,s)\in S^{\star}\times S$ and every $\sigma\in\Sigma_{w,s}$, as structural operation $F_{\sigma}$ from
$(\coprod_{i\in I}A^{i}/\Phi^{(\mathbf{I},\boldsymbol{\mathcal{A}})})_{w}$ to $\coprod_{i\in I}A^{i}_{s}/\Phi_{s}^{(\mathbf{I},\boldsymbol{\mathcal{A}})}$ corresponding to $\sigma$ that one defined by associating to an
$([(a_{\alpha},i_{\alpha})])_{ \alpha\in \lvert w \rvert}$ in $(\coprod_{i\in I}A^{i}/\Phi^{(\mathbf{I},\boldsymbol{\mathcal{A}})})_{w}$,
$[(F_{\sigma}^{k}(f^{i_{\alpha},k}(a_{\alpha})\mid {\alpha\in \lvert w \rvert}),k)]$ in $\coprod_{i\in
I}A^{i}_{s}/\Phi_{s}^{(\mathbf{I},\boldsymbol{\mathcal{A}})}$, where $k$ is an upper bound of $(i_{\alpha})_{ \alpha\in \lvert w  \rvert}$
in $\mathbf{I}$ and $F_{\sigma}^{k}$ the structural operation on $\mathbf{A}^{k}$ corresponding to $\sigma$.
On the other hand, for every $i\in I$, let $f^{i}$ be the composition
$\mathrm{pr}^{\Phi^{(\mathbf{I},\boldsymbol{\mathcal{A}})}}\circ\mathrm{inc}^{i}$, of the $S$-sorted mapping $\mathrm{inc}^{i}$ from $A^{i}$ to $\coprod_{i\in I}A^{i}_{s}$ and the $S$-sorted mapping $\mathrm{pr}^{\Phi^{(\mathbf{I},\boldsymbol{\mathcal{A}})}}$ from $\coprod_{i\in I}A^{i}_{s}$ to $\coprod_{i\in
I}A^{i}/\Phi^{(\mathbf{I},\boldsymbol{\mathcal{A}})}$.
Then, for every $i\in I$, $f^{i}$ is a homomorphism from $\mathbf{A}^{i}$ to $\varinjlim_{\mathbf{I}}\boldsymbol{\mathcal{A}}$
and the pair $(\varinjlim_{\mathbf{I}}\boldsymbol{\mathcal{A}},(f^{i})_{i\in I})$ is an inductive limit of $(\mathbf{I},\boldsymbol{\mathcal{A}})$.
\end{proposition}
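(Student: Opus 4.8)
The plan is to establish, in turn, that $\Phi^{(\mathbf{I},\boldsymbol{\mathcal{A}})}$ is an $S$-sorted equivalence on $\coprod_{i\in I}A^{i}$, that the structural operations of $\varinjlim_{\mathbf{I}}\boldsymbol{\mathcal{A}}$ are well defined, that each $f^{i}$ is a homomorphism forming an inductive cone, and finally that this cone satisfies the universal property. For the first point, reflexivity of $\Phi^{(\mathbf{I},\boldsymbol{\mathcal{A}})}_{s}$ is witnessed by $k=i$ together with $f^{i,i}_{s}=\mathrm{id}_{A^{i}_{s}}$, and symmetry is immediate; transitivity is the first place where upward directedness of $\mathbf{I}$ enters. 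Given $((a,i),(b,j))$ with witness $k$ and $((b,j),(c,l))$ with witness $m$, one chooses an upper bound $n$ of $\{k,m\}$ and applies clause (4) of the definition of inductive system: $f^{i,n}_{s}(a)=f^{k,n}_{s}(f^{i,k}_{s}(a))=f^{k,n}_{s}(f^{j,k}_{s}(b))=f^{j,n}_{s}(b)$ and similarly $f^{j,n}_{s}(b)=f^{l,n}_{s}(c)$, so $n$ witnesses $((a,i),(c,l))\in\Phi^{(\mathbf{I},\boldsymbol{\mathcal{A}})}_{s}$.

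The main obstacle is the well-definedness of the operations, and it is worth stressing why it is the crux: the $S$-sorted set $\coprod_{i\in I}A^{i}$ carries no canonical $\Sigma$-algebra structure, so $F_{\sigma}$ cannot be obtained by descent along $\mathrm{pr}^{\Phi^{(\mathbf{I},\boldsymbol{\mathcal{A}})}}$, and one must check directly that $[(F^{k}_{\sigma}((f^{i_{\alpha},k}_{w_{\alpha}}(a_{\alpha}))_{\alpha\in\lvert w\rvert}),k)]$ is independent of the chosen upper bound $k$ of $(i_{\alpha})_{\alpha\in\lvert w\rvert}$ and of the chosen representatives $(a_{\alpha},i_{\alpha})$. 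For independence of the upper bound, if $k,k'$ are two upper bounds, pick an upper bound $n$ of $\{k,k'\}$; since $f^{k,n}$ is a homomorphism and by clause (4), $f^{k,n}_{s}(F^{k}_{\sigma}((f^{i_{\alpha},k}_{w_{\alpha}}(a_{\alpha}))_{\alpha}))=F^{n}_{\sigma}((f^{i_{\alpha},n}_{w_{\alpha}}(a_{\alpha}))_{\alpha})$, and the same computation for $k'$ shows $n$ witnesses that the two candidate values are $\Phi^{(\mathbf{I},\boldsymbol{\mathcal{A}})}_{s}$-related. For independence of the representatives, if $(a'_{\alpha},i'_{\alpha})$ also represents $[(a_{\alpha},i_{\alpha})]$ with witness $m_{\alpha}$, choose an upper bound $n$ of $\{k,k'\}\cup\{m_{\alpha}\mid\alpha\in\lvert w\rvert\}$ (a finite set, so this is legitimate); clause (4) gives $f^{i_{\alpha},n}_{w_{\alpha}}(a_{\alpha})=f^{m_{\alpha},n}_{w_{\alpha}}(f^{i_{\alpha},m_{\alpha}}_{w_{\alpha}}(a_{\alpha}))=f^{m_{\alpha},n}_{w_{\alpha}}(f^{i'_{\alpha},m_{\alpha}}_{w_{\alpha}}(a'_{\alpha}))=f^{i'_{\alpha},n}_{w_{\alpha}}(a'_{\alpha})$ for each $\alpha$, and applying $F^{n}_{\sigma}$ together with the independence of the upper bound just proved finishes the argument. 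The nullary case $w=\lambda$ is covered as well: every element of $I$ (which is nonempty, by upward directedness) is an upper bound of the empty family, and the argument with an upper bound $n$ of $\{k,k'\}$ gives the required independence.

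Next, each $f^{i}=\mathrm{pr}^{\Phi^{(\mathbf{I},\boldsymbol{\mathcal{A}})}}\circ\mathrm{inc}^{i}$ is a homomorphism: for $\sigma\colon w\mor s$ and $(a_{\alpha})_{\alpha}\in A^{i}_{w}$, taking $k=i$ as an upper bound of the constant family $(i)_{\alpha}$ and using $f^{i,i}=\mathrm{id}$, both $f^{i}_{s}(F^{i}_{\sigma}((a_{\alpha})_{\alpha}))$ and $F_{\sigma}(([(a_{\alpha},i)])_{\alpha})$ equal $[(F^{i}_{\sigma}((a_{\alpha})_{\alpha}),i)]$. The cone equations $f^{i}=f^{j}\circ f^{i,j}$ for $(i,j)\in\leq$ hold because, for $a\in A^{i}_{s}$, the pair $((a,i),(f^{i,j}_{s}(a),j))$ lies in $\Phi^{(\mathbf{I},\boldsymbol{\mathcal{A}})}_{s}$ with witness $k=j$ (using $i\leq j$ and $f^{j,j}=\mathrm{id}$).

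Finally, for the universal property, given any inductive cone $(\mathbf{M},(g^{i})_{i\in I})$ from $\boldsymbol{\mathcal{A}}$, where $\mathbf{M}=(M,G)$, define $h\colon\varinjlim_{\mathbf{I}}\boldsymbol{\mathcal{A}}\mor\mathbf{M}$ by $h_{s}([(a,i)])=g^{i}_{s}(a)$. This is well defined: if $((a,i),(b,j))\in\Phi^{(\mathbf{I},\boldsymbol{\mathcal{A}})}_{s}$ with witness $k$, then $g^{i}_{s}(a)=g^{k}_{s}(f^{i,k}_{s}(a))=g^{k}_{s}(f^{j,k}_{s}(b))=g^{j}_{s}(b)$ by the cone equations $g^{i}=g^{k}\circ f^{i,k}$. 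It is a homomorphism: for $\sigma\colon w\mor s$ and $([(a_{\alpha},i_{\alpha})])_{\alpha}$, picking an upper bound $k$ of $(i_{\alpha})_{\alpha}$ and using that $g^{k}$ is a homomorphism and the cone equations once more yields $h_{s}(F_{\sigma}(([(a_{\alpha},i_{\alpha})])_{\alpha}))=g^{k}_{s}(F^{k}_{\sigma}((f^{i_{\alpha},k}_{w_{\alpha}}(a_{\alpha}))_{\alpha}))=G_{\sigma}((g^{i_{\alpha}}_{w_{\alpha}}(a_{\alpha}))_{\alpha})=G_{\sigma}((h_{w_{\alpha}}([(a_{\alpha},i_{\alpha})]))_{\alpha})$. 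It satisfies $h\circ f^{i}=g^{i}$ by construction, and it is the unique such homomorphism because $h'\circ f^{i}=g^{i}$ for all $i$ forces $h'_{s}([(a,i)])=h'_{s}(f^{i}_{s}(a))=g^{i}_{s}(a)$, as every element of $(\varinjlim_{\mathbf{I}}\boldsymbol{\mathcal{A}})_{s}$ has the form $[(a,i)]=f^{i}_{s}(a)$. Apart from the well-definedness of the structural operations, each of these verifications is a routine diagram chase.
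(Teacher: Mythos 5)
Your proof is correct. The paper states this proposition in its preliminaries without giving a proof, so there is nothing to compare against; your argument --- verifying that $\Phi^{(\mathbf{I},\boldsymbol{\mathcal{A}})}$ is an $S$-sorted equivalence via upward directedness, establishing well-definedness of the structural operations with respect to both the choice of upper bound and the choice of representatives, and then checking the cone equations and the universal property --- is exactly the standard argument the authors implicitly rely on, and each step is sound.
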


%The inductive limit $\varinjlim_{\mathbf{I}}\boldsymbol{\mathcal{A}}$---along with the homomorphisms $f^{i}$ from $\mathbf{A}^{i}$ to $\varinjlim_{\mathbf{I}}\boldsymbol{\mathcal{A}}$---of an inductive system $\boldsymbol{\mathcal{A}}$ is the optimal way of classifying after putting together but not mixing the $\Sigma$-algebras $\mathbf{A}^{i}$ occurring in the net of interactions between them given by the transition mappings $f^{i,j}\colon \mathbf{A}^{i}\mor \mathbf{A}^{j}$ of the inductive system $\boldsymbol{\mathcal{A}}$.

In the single-sorted case, as in the many-sorted case, to calculate the inductive limit of an inductive system of
$\Sigma$-algebras, we can suppress from the inductive system those $\Sigma$-algebras which are initial, i.e., which have $\varnothing$ as underlying set.

\begin{remark}
Let $\boldsymbol{\mathcal{A}}$ be an inductive system of $\Sigma$-algebras relative to $\mathbf{I}$ and let $J$ be the subset of $I$ defined as
$$
 J = \{\,i\in I\mid A^{i}\neq (\varnothing)_{s\in S} \,\}.
$$
Then $\mathbf{J} = (J,\leq)$ is a directed preordered set (if $i,j\in J$, then $A^{i}\neq (\varnothing)_{s\in S}$, $A^{j}\neq
(\varnothing)_{s\in S}$, and there exists a $k\in I$ such that $k\geq i, j$, hence we have the homomorphisms $f^{i,k}$ from
$\mathbf{A}^{i}$ to  $\mathbf{A}^{k}$ and $f^{j,k}$ from $\mathbf{A}^{j}$ to  $\mathbf{A}^{k}$, therefore $A^{k}\neq
(\varnothing)_{s\in S}$, and, consequently, $k\in J$). Moreover, by definition, it is easy to see that $\varinjlim_{\mathbf{I}}\boldsymbol{\mathcal{A}}$ is the same as  $\varinjlim_{\mathbf{J}}\boldsymbol{\mathcal{A}}\!\!\upharpoonright\!\!J$. Therefore, to calculate the inductive limit of an inductive system of $\Sigma$-algebras, we can suppress from the inductive system those $\Sigma$-algebras which are initial, i.e., which have as underlying $S$-sorted set $(\varnothing)_{s\in S}$.
\end{remark}

Moreover, as it is well-known, for \emph{single-sorted} algebras, the inductive limit of an inductive system of \emph{nonempty}
$\Sigma$-algebras $\boldsymbol{\mathcal{A}}$ relative to $\mathbf{I}$ can be obtained, alternative, but equivalently, as a quotient algebra
$\mathbf{C}/{\equiv}$, where $\mathbf{C}$ is the subalgebra of $\prod_{i\in I}\mathbf{A}_{i}$ determined by the set $C$ of all those choice functions for $(A_{i})_{i\in I}$ which are \emph{eventually consistent}, i.e., by
$$
\textstyle
C = \{x\in\prod_{i\in I}A_{i}\mid\exists k\in I\,\forall j\geq i\geq k\, (f_{i,j}(x_{i}) = x_{j})\}
$$
and $\equiv$ the congruence on $\mathbf{C}$ defined as
$$
x\equiv y\text{ if and only if }\exists k\in I\,\forall i\geq k\, (x_{i} = y_{i}).
$$

However, for a set of sorts  $S$ such that $\mathrm{card}(S)\geq 2$, one can easily find $S$-sorted signatures $\Sigma$ and
$\Sigma$-algebras $\mathbf{A}$ such that
\begin{enumerate}
\item $\mathbf{A}$ is non-initial, i.e., such that the underlying $S$-sorted set is different from $(\varnothing)_{s\in S}$, but
\item $\mathbf{A}$ is \emph{globally empty}, i.e., such that there is not any homomorphism from $\mathbf{1}$, the final $\Sigma$-algebra, to $\mathbf{A}$.
\end{enumerate}
This fact has as a consequence that the above mentioned alternative construction of the inductive limit can not be applied
without qualification in the many-sorted case, because the suppression of every occurrence of the initial $\Sigma$-algebra in
a direct system does not have any effect on the elimination of those $\Sigma$-algebra which are non-initial but globally empty.

\begin{proposition}$[\,$\cite{cs16}, Prop.~2.5$\,]$\label{isomorfismoAlgebrasSoporteConstante}
Let $\boldsymbol{\mathcal{A}}$ be an inductive system of $\Sigma$-algebras relative to $\mathbf{I}$, $\mathbf{C}$ the subalgebra of $\prod_{i\in I}\mathbf{A}^{i}$ determined by the $S$-sorted set $C$ of $\prod_{i\in I}\mathbf{A}^{i}$ defined, for every $s\in S$, as follows
$$
\textstyle
C_{s}=\{x\in\prod_{i\in I}A^{i}_{s} \mid \exists\, k\in I,\;
\forall\, j\geq i\geq k,\; f^{i,j}_{s}(x_{i}) = x_{j}   \},
$$
and let $\equiv$ be the congruence on $\mathbf{C}$ defined, for every $s\in S$, as follows
$$
x\equiv_{s} y \text{ if and only if } \exists\, k\in I,\; \forall\, i\geq k,\; x_{i}=y_{i}.
$$
Then $(\mathbf{A}^{i})_{i\in I}$ is a family of $\Sigma$-algebras with constant support if and only if $\mathbf{C}/{\equiv}$ is isomorphic to $\varinjlim_{\mathbf{I}}\boldsymbol{\mathcal{A}}$.
\end{proposition}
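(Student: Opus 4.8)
The plan is to produce an explicit homomorphism from $\mathbf{C}/{\equiv}$ to $\varinjlim_{\mathbf{I}}\boldsymbol{\mathcal{A}}$ that is a monomorphism with no hypothesis at all, and then to show that the constant support condition is precisely what turns it into a surjection; the converse implication will instead follow from a comparison of supports.

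First I would define, for each $s\in S$, a mapping $\theta_{s}\colon C_{s}\mor\big(\coprod_{i\in I}A^{i}/\Phi^{(\mathbf{I},\boldsymbol{\mathcal{A}})}\big)_{s}$ sending an eventually consistent $x\in C_{s}$ to $[(x_{k},k)]$, where $k\in I$ witnesses the eventual consistency of $x$ (that is, $f^{i,j}_{s}(x_{i}) = x_{j}$ whenever $j\geq i\geq k$). Using that $\mathbf{I}$ is upward directed and that the transition maps compose coherently, $[(x_{k},k)]$ is independent of the chosen witness $k$, so $\theta_{s}$ is well defined; using moreover that the transition maps are homomorphisms (and passing to a common upper bound of witnesses), the family $\theta = (\theta_{s})_{s\in S}$ is a homomorphism from $\mathbf{C}$ to $\varinjlim_{\mathbf{I}}\boldsymbol{\mathcal{A}}$; and, the key point, $\mathrm{Ker}(\theta) = {\equiv}$: if $\theta_{s}(x) = \theta_{s}(y)$, then, picking $m$ above witnesses for both $x$ and $y$ and above the index realizing the equality of the two classes, one gets $x_{m} = y_{m}$, hence $x_{i} = y_{i}$ for every $i\geq m$, i.e. $x\equiv_{s}y$; the reverse inclusion is immediate. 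By the universal property of the quotient this yields an \emph{injective} homomorphism $\overline{\theta}\colon\mathbf{C}/{\equiv}\mor\varinjlim_{\mathbf{I}}\boldsymbol{\mathcal{A}}$, valid regardless of supports.

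For the implication from left to right, assuming $(\mathbf{A}^{i})_{i\in I}$ has constant support, I would show $\theta$ is surjective. Given a class $[(a,i)]$ with $a\in A^{i}_{s}$, put $x_{j} = f^{i,j}_{s}(a)$ for $j\geq i$; for $j\not\geq i$, from $A^{i}_{s}\neq\varnothing$ and constant support we get $A^{j}_{s}\neq\varnothing$, so $x_{j}\in A^{j}_{s}$ may be chosen arbitrarily. Then $x\in C_{s}$ (with $i$ as a witness) and $\theta_{s}(x) = [(x_{i},i)] = [(a,i)]$. Thus $\overline{\theta}$ is a bijective homomorphism, hence an isomorphism, and $\mathbf{C}/{\equiv}\cong\varinjlim_{\mathbf{I}}\boldsymbol{\mathcal{A}}$.

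For the implication from right to left, I would compute both supports. Since $\mathbf{C}$ is a subalgebra of $\prod_{i\in I}\mathbf{A}^{i}$, Proposition~\ref{propssupport} gives $\mathrm{supp}_{S}(\mathbf{C})\subseteq\bigcap_{i\in I}\mathrm{supp}_{S}(\mathbf{A}^{i})$; conversely, if $A^{i}_{s}\neq\varnothing$ for every $i\in I$, then fixing some $k\in I$ (note $I\neq\varnothing$, as $\mathbf{I}$ is directed) and $a\in A^{k}_{s}$ and repeating the construction above produces an element of $C_{s}$, so in fact $\mathrm{supp}_{S}(\mathbf{C}) = \bigcap_{i\in I}\mathrm{supp}_{S}(\mathbf{A}^{i})$, and since $\mathrm{pr}^{\equiv}$ is surjective the same holds for $\mathbf{C}/{\equiv}$. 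On the other hand the coproduct-quotient description of $\varinjlim_{\mathbf{I}}\boldsymbol{\mathcal{A}}$ gives $\mathrm{supp}_{S}(\varinjlim_{\mathbf{I}}\boldsymbol{\mathcal{A}}) = \bigcup_{i\in I}\mathrm{supp}_{S}(\mathbf{A}^{i})$. An isomorphism is in particular a surjective $S$-sorted mapping, so if $\mathbf{C}/{\equiv}\cong\varinjlim_{\mathbf{I}}\boldsymbol{\mathcal{A}}$, Proposition~\ref{propssupport} forces $\bigcap_{i\in I}\mathrm{supp}_{S}(\mathbf{A}^{i}) = \bigcup_{i\in I}\mathrm{supp}_{S}(\mathbf{A}^{i})$, whence $\mathrm{supp}_{S}(\mathbf{A}^{i}) = \mathrm{supp}_{S}(\mathbf{A}^{j})$ for all $i,j\in I$, i.e. constant support. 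The hardest part will be the bookkeeping of the first step --- checking that $\theta$ is well defined, a homomorphism, and has kernel exactly $\equiv$ --- after which the role of the constant support hypothesis, confined to the surjectivity argument and to the support computation, is transparent.
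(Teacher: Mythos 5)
The paper states this proposition only by citation to \cite{cs16} and gives no proof, so there is nothing internal to compare against; judged on its own, your argument is correct and complete. The comparison homomorphism $\theta\colon\mathbf{C}\mor\varinjlim_{\mathbf{I}}\boldsymbol{\mathcal{A}}$, $x\mapsto[(x_{k},k)]$, is indeed well defined independently of the witness $k$ (pass to a common upper bound), has kernel exactly $\equiv$, and the two places where the constant-support hypothesis enters are exactly where you put them: filling in the coordinates $x_{j}$ for $j\not\geq i$ in the surjectivity step, and the support computation $\mathrm{supp}_{S}(\mathbf{C}/{\equiv})=\bigcap_{i\in I}\mathrm{supp}_{S}(\mathbf{A}^{i})$ versus $\mathrm{supp}_{S}(\varinjlim_{\mathbf{I}}\boldsymbol{\mathcal{A}})=\bigcup_{i\in I}\mathrm{supp}_{S}(\mathbf{A}^{i})$, whose equality under any isomorphism (via Proposition~\ref{propssupport}) forces all supports to coincide.
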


The usual definitions of reduced products and ultraproducts for single-sorted algebras have an immediate translation for many-sorted algebras. However, some characterizations of such constructions are not valid for arbitrary families of many-sorted algebras, although they are valid for those families who have the additional property of having constant support.

\begin{definition}
Let $I$ be a nonempty set, $\mathcal{F}$ a filter on $I$, and  $(\mathbf{A}^{i})_{i\in I}$ a family of $\Sigma$-algebras. Then  $\boldsymbol{\mathcal{F}} = (\mathcal{F},\leq) = (\mathcal{F},\supseteq)$ is a nonempty upward directed preordered set and $\boldsymbol{\mathcal{A}}(\mathcal{F}) = ((\mathbf{A}(J))_{J\in \mathcal{F}},(\mathrm{p}^{K,J})_{K\leq J})$, where, for every $J\in \mathcal{F}$, $\mathbf{A}(J) = \prod_{j\in J}\mathbf{A}^{j}$ and, for every $J,K\in \mathcal{F}$ such that $K\supseteq J$, $\mathrm{p}^{K,J}$ denotes the unique $\Sigma$-homomorphism $\langle\mathrm{pr}^{K,j}\rangle_{j\in J}\colon\prod_{k\in K}\mathbf{A}^{k}\mor \prod_{j\in J}\mathbf{A}^{j}$ such that, for every $j\in J$, $\mathrm{pr}^{J,j}\circ \langle\mathrm{pr}^{K,j}\rangle_{j\in J} = \mathrm{pr}^{K,j}$, is an inductive system of $\Sigma$-algebras relative to $\boldsymbol{\mathcal{F}}$. The underlying $\Sigma$-algebra of the inductive limit $(\varinjlim_{\boldsymbol{\mathcal{F}}}\boldsymbol{\mathcal{A}}(\mathcal{F}),(\mathrm{p}^{J})_{J\in \mathcal{F}})$ of $\boldsymbol{\mathcal{A}}(\mathcal{F})$, also denoted by $\prod^{\mathcal{F}}_{i\in I}\mathbf{A}^{i}$, is called the \emph{reduced product} of $(\mathbf{A}^{i})_{i\in I}$ \emph{relative to} $\mathcal{F}$. If $\mathcal{F}$ is an ultrafilter on $I$, then the  underlying $\Sigma$-algebra of the inductive limit of the corresponding inductive system $\boldsymbol{\mathcal{A}}(\mathcal{F})$ is called the \emph{ultraproduct} of $(\mathbf{A}^{i})_{i\in I}$ \emph{relative to} $\mathcal{F}$.
\end{definition}

\begin{proposition}$[\,$\cite{cs16}, Prop.~2.7$\,]$
Let $I$ be a nonempty set, $\mathcal{F}$ a filter on $I$, and $(\mathbf{A}^{i})_{i\in I}$ a family of $\Sigma$-algebras. Then the  $S$-sorted relation $\equiv^{\mathcal{F}}$ in $\prod_{i\in I}A^{i}$, defined, for every $s\in S$, as follows %
$$
a\equiv^{\mathcal{F}}_{s}b\text{ if, and only if, } \mathrm{Eq}(a,b)\in \mathcal{F},
$$
where $\mathrm{Eq}(a,b)=\{i\in I\mid a_{i}=b_{i}\}$ is the equalizer of $a$ and $b$, is a congruence on $\prod_{i\in I}\mathbf{A}^{i}$.
\end{proposition}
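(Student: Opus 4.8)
The plan is to verify directly the two defining properties of a congruence on $\prod_{i\in I}\mathbf{A}^{i}$: first, that for each $s\in S$ the relation $\equiv^{\mathcal{F}}_{s}$ is an equivalence relation on $\left(\prod_{i\in I}A^{i}\right)_{s} = \prod_{i\in I}A^{i}_{s}$, and second, that the family $(\equiv^{\mathcal{F}}_{s})_{s\in S}$ is compatible with every structural operation $F_{\sigma}$ of $\prod_{i\in I}\mathbf{A}^{i}$. Throughout, the only properties of $\mathcal{F}$ that enter are that $I\in\mathcal{F}$, that $\mathcal{F}$ is closed under finite intersections, and that $\mathcal{F}$ is upward closed; in particular the ultrafilter hypothesis plays no role in this proposition.

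For the equivalence-relation part I would argue, with $s\in S$ fixed: reflexivity holds since $\mathrm{Eq}(a,a) = I\in\mathcal{F}$; symmetry since $\mathrm{Eq}(a,b) = \mathrm{Eq}(b,a)$; and transitivity since $\mathrm{Eq}(a,b)\cap\mathrm{Eq}(b,c)\subseteq\mathrm{Eq}(a,c)$, so that if both left-hand equalizers lie in $\mathcal{F}$ then so does their intersection, and hence, by upward closure, so does $\mathrm{Eq}(a,c)$.

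For the compatibility part, let $(w,s)\in(S^{\star}-\{\lambda\})\times S$, let $\sigma\colon w\mor s$, and let $a = (a_{\alpha})_{\alpha\in\lvert w\rvert}$ and $b = (b_{\alpha})_{\alpha\in\lvert w\rvert}$ in $\left(\prod_{i\in I}A^{i}\right)_{w}$ be such that $a_{\alpha}\equiv^{\mathcal{F}}_{w_{\alpha}}b_{\alpha}$, i.e., $\mathrm{Eq}(a_{\alpha},b_{\alpha})\in\mathcal{F}$, for every $\alpha\in\lvert w\rvert$. By the definition of the product $\Sigma$-algebra one has $F_{\sigma}(a) = (F^{i}_{\sigma}((a_{\alpha}(i))_{\alpha\in\lvert w\rvert}))_{i\in I}$, and likewise for $b$. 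The key observation is the inclusion
$$
\bigcap_{\alpha\in\lvert w\rvert}\mathrm{Eq}(a_{\alpha},b_{\alpha})\subseteq\mathrm{Eq}(F_{\sigma}(a),F_{\sigma}(b)),
$$
since for any index $i$ in the left-hand set we have $a_{\alpha}(i) = b_{\alpha}(i)$ for every $\alpha$, whence $(a_{\alpha}(i))_{\alpha\in\lvert w\rvert} = (b_{\alpha}(i))_{\alpha\in\lvert w\rvert}$ and therefore $F^{i}_{\sigma}$ takes the same value on both tuples. Now $\lvert w\rvert$ is finite, so the left-hand intersection is a finite intersection of members of $\mathcal{F}$ and hence lies in $\mathcal{F}$; by upward closure $\mathrm{Eq}(F_{\sigma}(a),F_{\sigma}(b))\in\mathcal{F}$, that is, $F_{\sigma}(a)\equiv^{\mathcal{F}}_{s}F_{\sigma}(b)$, as required.

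I do not expect any genuine obstacle here: this is the finite-support core of the classical \L os-style computation, and the only point deserving explicit mention is that the arities $w$ of formal operations are \emph{words}, hence of finite length, which is exactly what makes the finite intersection property of $\mathcal{F}$ applicable; neither the directedness of $\boldsymbol{\mathcal{F}}$, nor the ultrafilter property, nor the constant-support condition is needed for this statement.
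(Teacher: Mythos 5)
Your proof is correct and is the standard \L os-style verification; the paper itself states this proposition without proof, merely citing it from an earlier work, so there is nothing to compare against, and your argument (equivalence via the filter axioms, compatibility via the finite intersection $\bigcap_{\alpha\in\lvert w\rvert}\mathrm{Eq}(a_{\alpha},b_{\alpha})\subseteq\mathrm{Eq}(F_{\sigma}(a),F_{\sigma}(b))$ and upward closure) is exactly what is needed. Your closing remarks are also accurate: only the filter axioms are used, and the finiteness of the arity $w$ is the point that makes the finite intersection property applicable.
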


\begin{proposition}$[\,$\cite{cs16}, Prop.~2.8$\,]$
Let $I$ be a nonempty set, $J$ a nonempty subset of $I$, $\mathcal{F}$ the principal filter on $I$ generated by $J$, and $(\mathbf{A}^{i})_{i\in I}$ a family of $\Sigma$-algebras. If $(\mathbf{A}^{i})_{i\in I}$ is a family with constant support, then $\prod_{i\in I}\mathbf{A}^{i}/{\equiv}^{\mathcal{F}}\cong \prod_{j\in J}\mathbf{A}^{j}$.
\end{proposition}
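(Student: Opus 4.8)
The plan is to realize $\prod_{i\in I}\mathbf{A}^{i}/{\equiv}^{\mathcal{F}}$ as the image of a canonical surjective homomorphism and then invoke the first isomorphism theorem, i.e., the universal property of the quotient recalled above. The candidate map is the restriction-of-choice-functions homomorphism $\mathrm{p}^{I,J} = \langle\mathrm{pr}^{I,j}\rangle_{j\in J}\colon\prod_{i\in I}\mathbf{A}^{i}\mor\prod_{j\in J}\mathbf{A}^{j}$, which is available because $J\subseteq I$.

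First I would compute its kernel. For $s\in S$ and $a,b\in\prod_{i\in I}A^{i}_{s}$, one has $\mathrm{p}^{I,J}_{s}(a) = \mathrm{p}^{I,J}_{s}(b)$ if, and only if, $a_{j} = b_{j}$ for every $j\in J$, that is, if, and only if, $J\subseteq\mathrm{Eq}(a,b)$. Since $\mathcal{F}$ is the principal filter on $I$ generated by $J$, the latter condition is exactly $\mathrm{Eq}(a,b)\in\mathcal{F}$, i.e., $a\equiv^{\mathcal{F}}_{s}b$. Hence $\mathrm{Ker}(\mathrm{p}^{I,J}) = {\equiv}^{\mathcal{F}}$.

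The hard part — and the only place where the hypothesis of constant support enters — will be to show that $\mathrm{p}^{I,J}$ is surjective, i.e., that $\mathrm{p}^{I,J}_{s}$ is surjective for every $s\in S$. Fix $s\in S$. If $\prod_{j\in J}A^{j}_{s} = \varnothing$, then $A^{j_{0}}_{s} = \varnothing$ for some $j_{0}\in J$, so $s\notin\mathrm{supp}_{S}(A^{j_{0}})$; by constant support $s\notin\mathrm{supp}_{S}(A^{i})$ for every $i\in I$, whence $\prod_{i\in I}A^{i}_{s} = \varnothing$ too, and $\mathrm{p}^{I,J}_{s}$ is vacuously surjective. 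If $\prod_{j\in J}A^{j}_{s}\neq\varnothing$, pick $b = (b_{j})_{j\in J}$ in it; then $A^{j}_{s}\neq\varnothing$, hence $s\in\mathrm{supp}_{S}(A^{j})$, for every $j\in J$, so, again by constant support, $s\in\mathrm{supp}_{S}(A^{i})$, i.e., $A^{i}_{s}\neq\varnothing$, for every $i\in I$. By the axiom of choice one may then choose $c_{i}\in A^{i}_{s}$ for each $i\in I\setminus J$ and define $a\in\prod_{i\in I}A^{i}_{s}$ by $a_{j} = b_{j}$ for $j\in J$ and $a_{i} = c_{i}$ otherwise; clearly $\mathrm{p}^{I,J}_{s}(a) = b$. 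Thus $\mathrm{p}^{I,J}$ is surjective. It is precisely this step that fails for an arbitrary many-sorted family: without constant support, $A^{i}_{s}$ could be empty for some $i\in I\setminus J$ while $A^{j}_{s}\neq\varnothing$ for the $j\in J$, so that a choice function over $J$ would admit no extension to one over $I$.

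Finally, combining the two computations, $\mathrm{p}^{I,J}$ is a surjective homomorphism with $\mathrm{Ker}(\mathrm{p}^{I,J}) = {\equiv}^{\mathcal{F}}$. Applying the universal property of the quotient to $\mathrm{p}^{I,J}$, the induced homomorphism $\mathrm{p}^{{\equiv}^{\mathcal{F}},\mathrm{Ker}(\mathrm{p}^{I,J})}\colon\prod_{i\in I}\mathbf{A}^{i}/{\equiv}^{\mathcal{F}}\mor\prod_{j\in J}\mathbf{A}^{j}$ is surjective (because $\mathrm{p}^{I,J}$ is) and injective (because its kernel is $\equiv^{\mathcal{F}}$), hence a bijective homomorphism of $\Sigma$-algebras and therefore an isomorphism. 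This yields $\prod_{i\in I}\mathbf{A}^{i}/{\equiv}^{\mathcal{F}}\cong\prod_{j\in J}\mathbf{A}^{j}$, as desired.
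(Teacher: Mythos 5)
Your proof is correct: the paper itself only cites \cite{cs16} for this proposition without reproducing an argument, and your route --- computing that the kernel of the restriction homomorphism $\mathrm{p}^{I,J}=\langle\mathrm{pr}^{I,j}\rangle_{j\in J}$ is exactly ${\equiv}^{\mathcal{F}}$, using constant support (together with the nonemptiness of $J$) precisely to extend a choice function over $J$ to one over $I$ and thus get surjectivity, and then invoking the universal property of the quotient --- is the standard and evidently intended one. You also correctly isolate the only point where the many-sorted hypothesis is genuinely needed, namely the surjectivity of $\mathrm{p}^{I,J}_{s}$ at sorts $s$ where some $A^{i}_{s}$ with $i\in I\setminus J$ might otherwise be empty.
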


As it is well known, the reduced product of a family of single-sorted algebras is isomorphic to a quotient of the product of the family. However, when considering systems of many-sorted algebras, this representation is valid only for systems of many-sorted algebras with constant support.

\begin{lemma}\label{ConstSuppDerivedFamilyMSSet}
Let $I$ be a nonempty set and $\mathcal{F}$ a filter on $I$. If $(A^{i})_{i\in I}$ is an $I$-indexed family of $S$-sorted sets with constant support, then, for every $i\in I$ and every $J\in \mathcal{F}$, $\mathrm{supp}_{S}(A^{i}) = \mathrm{supp}_{S}(A(J))$, where $A(J)$ is the underlying $S$-sorted of $\mathbf{A}(J)$. Therefore $(A(J))_{J\in \mathcal{F}}$ is an $\mathcal{F}$-indexed family of $S$-sorted sets with constant support, i.e., for every $J,K\in\mathcal{F}$, $\mathrm{supp}_{S}(A(J)) = \mathrm{supp}_{S}(A(K))$.
\end{lemma}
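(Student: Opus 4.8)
The plan is to reduce everything to part~(3) of Proposition~\ref{propssupport}, which computes the support of a product of $S$-sorted sets as the intersection of the supports of the factors, together with the constant-support hypothesis, which makes that intersection trivial to evaluate. The only point requiring a modicum of care is the degenerate case of an empty index set: if some $J\in\mathcal{F}$ were empty, then $A(J) = \prod_{j\in\varnothing}A^{j}$ would be the final $S$-sorted set $1 = (1)_{s\in S}$, whose support is all of $S$, and the claimed equality could fail. This never happens, because $\mathcal{F}$ is a (proper) filter on $I$, so $\varnothing\notin\mathcal{F}$ and hence every $J\in\mathcal{F}$ is nonempty.

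Carrying this out: since $(A^{i})_{i\in I}$ is a family of $S$-sorted sets with constant support, there is a subset $L$ of $S$ with $\mathrm{supp}_{S}(A^{i}) = L$ for every $i\in I$. Fix $i\in I$ and $J\in\mathcal{F}$. As just noted, $J\neq\varnothing$, and, since $J\subseteq I$, we have $\mathrm{supp}_{S}(A^{j}) = L$ for every $j\in J$. Recalling that $A(J)$, the underlying $S$-sorted set of $\mathbf{A}(J) = \prod_{j\in J}\mathbf{A}^{j}$, is precisely $\prod_{j\in J}A^{j}$, Proposition~\ref{propssupport}~(3) yields
$$
\mathrm{supp}_{S}(A(J)) = \mathrm{supp}_{S}\Bigl(\prod_{j\in J}A^{j}\Bigr) = \bigcap_{j\in J}\mathrm{supp}_{S}(A^{j}) = \bigcap_{j\in J}L = L,
$$
where the last equality uses $J\neq\varnothing$. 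Therefore $\mathrm{supp}_{S}(A^{i}) = L = \mathrm{supp}_{S}(A(J))$, which is the first assertion.

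For the final (``therefore'') assertion, given any $J,K\in\mathcal{F}$, the displayed computation gives $\mathrm{supp}_{S}(A(J)) = L = \mathrm{supp}_{S}(A(K))$, so $(A(J))_{J\in\mathcal{F}}$ is an $\mathcal{F}$-indexed family of $S$-sorted sets with constant support. As indicated above, the only obstacle, if one wishes to call it that, is remembering to invoke properness of $\mathcal{F}$ to exclude the empty product; everything else is a direct appeal to Proposition~\ref{propssupport}.
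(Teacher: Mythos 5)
Your proof is correct and follows essentially the same route as the paper's: both reduce the claim to part~(3) of Proposition~\ref{propssupport} and then collapse the intersection $\bigcap_{j\in J}\mathrm{supp}_{S}(A^{j})$ using the constant-support hypothesis. Your explicit remark that $J\neq\varnothing$ (since $\varnothing$ belongs to no filter) is a point the paper leaves implicit when it picks a $j\in J$, but it is the same argument.
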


\begin{proof}
Let $i$ be an element of $I$ and $J\in \mathcal{F}$. Then, by definition of $A(J)$, by Proposition~\ref{propssupport}, and by hypothesis, we have that $\mathrm{supp}_{S}(A(J)) = \bigcap_{j\in J}\mathrm{supp}_{S}(A^{j}) = \mathrm{supp}_{S}(A^{j})$, for every $j\in J$. But, by hypothesis, $\mathrm{supp}_{S}(A^{i}) = \mathrm{supp}_{S}(A^{j})$. Hence $\mathrm{supp}_{S}(A^{i}) = \mathrm{supp}_{S}(A(J))$. From this it follows, immediately, that $(A(J))_{J\in \mathcal{F}}$ is an $\mathcal{F}$-indexed family of $S$-sorted sets with constant support.
\end{proof}

\begin{proposition}$[\,$\cite{cs16}, Prop.~2.9$\,]$\label{CaracProdRed}
Let $I$ be a nonempty set, $\mathcal{F}$ a filter on $I$, and $(\mathbf{A}^{i})_{i\in I}$ a family of $\Sigma$-algebras. If  $(\mathbf{A}^{i})_{i\in I}$ is a family with constant support, then $\prod^{\mathcal{F}}_{i\in I}\mathbf{A}^{i}$ is isomorphic to $\prod_{i\in I}\mathbf{A}^{i}/{\equiv}^{\mathcal{F}}$.
\end{proposition}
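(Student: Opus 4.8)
The plan is to realize $\prod_{i\in I}\mathbf{A}^{i}/{\equiv}^{\mathcal{F}}$ as the reduced product by writing down the canonical homomorphism from $\prod_{i\in I}\mathbf{A}^{i}$ into the reduced product and checking that it is surjective with kernel $\equiv^{\mathcal{F}}$. Recall that $\prod^{\mathcal{F}}_{i\in I}\mathbf{A}^{i}$ is, by definition, the underlying $\Sigma$-algebra of the inductive limit $(\varinjlim_{\boldsymbol{\mathcal{F}}}\boldsymbol{\mathcal{A}}(\mathcal{F}),(\mathrm{p}^{J})_{J\in\mathcal{F}})$ of the inductive system $\boldsymbol{\mathcal{A}}(\mathcal{F})$ relative to $\boldsymbol{\mathcal{F}}=(\mathcal{F},\supseteq)$. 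Since $I\in\mathcal{F}$ and $\mathbf{A}(I)=\prod_{i\in I}\mathbf{A}^{i}$, the cone component $\mathrm{p}^{I}\colon\prod_{i\in I}\mathbf{A}^{i}\mor\prod^{\mathcal{F}}_{i\in I}\mathbf{A}^{i}$ is a homomorphism, and in the explicit description of $\varinjlim_{\boldsymbol{\mathcal{F}}}\boldsymbol{\mathcal{A}}(\mathcal{F})$ as the quotient of $\coprod_{J\in\mathcal{F}}A(J)$ by $\Phi^{(\boldsymbol{\mathcal{F}},\boldsymbol{\mathcal{A}}(\mathcal{F}))}$ it sends $a\in\prod_{i\in I}A^{i}_{s}$ to $[(a,I)]$. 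Once $\mathrm{p}^{I}$ is shown to be surjective with $\mathrm{Ker}(\mathrm{p}^{I})={\equiv}^{\mathcal{F}}$, the universal property of the quotient $\prod_{i\in I}\mathbf{A}^{i}/{\equiv}^{\mathcal{F}}$ will produce a homomorphism $\prod_{i\in I}\mathbf{A}^{i}/{\equiv}^{\mathcal{F}}\mor\prod^{\mathcal{F}}_{i\in I}\mathbf{A}^{i}$ which is bijective, hence an isomorphism, since a bijective homomorphism of $\Sigma$-algebras is an isomorphism.

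The kernel computation I expect to be routine and to use no hypothesis on supports. Unwinding the definition of $\Phi^{(\boldsymbol{\mathcal{F}},\boldsymbol{\mathcal{A}}(\mathcal{F}))}$, and noting that in $(\mathcal{F},\supseteq)$ every common upper bound of $I$ and $I$ is a member of $\mathcal{F}$ contained in $I$, one finds that $[(a,I)]=[(b,I)]$ holds if and only if there is some $L\in\mathcal{F}$ with $L\subseteq\{i\in I\mid a_{i}=b_{i}\}=\mathrm{Eq}(a,b)$; as $\mathcal{F}$ is upward closed, this is equivalent to $\mathrm{Eq}(a,b)\in\mathcal{F}$, i.e.\ to $a\equiv^{\mathcal{F}}_{s}b$. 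Hence $\mathrm{Ker}(\mathrm{p}^{I})={\equiv}^{\mathcal{F}}$.

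The surjectivity of $\mathrm{p}^{I}$ is the step where the constant support hypothesis is genuinely needed, and I expect it to be the crux of the argument. An arbitrary element of $(\prod^{\mathcal{F}}_{i\in I}\mathbf{A}^{i})_{s}$ has the form $[(y,J)]$ with $J\in\mathcal{F}$ and $y\in\prod_{j\in J}A^{j}_{s}$, and to see that it lies in the image it suffices to produce $a\in\prod_{i\in I}A^{i}_{s}$ with $a_{j}=y_{j}$ for all $j\in J$, for then $[(a,I)]=[(y,J)]$ (with witness $L=J$). Now $\mathcal{F}$ being a proper filter, $J\neq\varnothing$; fixing $j_{0}\in J$, the element $y_{j_{0}}\in A^{j_{0}}_{s}$ witnesses $s\in\mathrm{supp}_{S}(A^{j_{0}})$, so the constant support assumption forces $s\in\mathrm{supp}_{S}(A^{i})$, i.e.\ $A^{i}_{s}\neq\varnothing$, for \emph{every} $i\in I$; one may then set $a_{i}=y_{i}$ for $i\in J$ and choose $a_{i}\in A^{i}_{s}$ arbitrarily for $i\in I\setminus J$. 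Without constant support this step can fail: some $A^{i}_{s}$ might be empty while $\prod_{j\in J}A^{j}_{s}$ is not, and then $[(y,J)]$ would not belong to the image of $\mathrm{p}^{I}$.

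Putting the two steps together, $\mathrm{p}^{I}$ is a surjective homomorphism whose kernel is the congruence $\equiv^{\mathcal{F}}$, so the induced map is an isomorphism $\prod_{i\in I}\mathbf{A}^{i}/{\equiv}^{\mathcal{F}}\cong\prod^{\mathcal{F}}_{i\in I}\mathbf{A}^{i}$, as claimed. As an alternative, one could instead combine Lemma~\ref{ConstSuppDerivedFamilyMSSet}, which gives that $(\mathbf{A}(J))_{J\in\mathcal{F}}$ is a family of $\Sigma$-algebras with constant support, with Proposition~\ref{isomorfismoAlgebrasSoporteConstante} applied to the inductive system $\boldsymbol{\mathcal{A}}(\mathcal{F})$; but that route still requires identifying the resulting quotient of the algebra of eventually consistent choice functions for $(\mathbf{A}(J))_{J\in\mathcal{F}}$ with $\prod_{i\in I}\mathbf{A}^{i}/{\equiv}^{\mathcal{F}}$, which involves more index bookkeeping, so the direct argument via $\mathrm{p}^{I}$ seems more economical.
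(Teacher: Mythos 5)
Your argument is correct. Note first that the paper itself gives no proof of this proposition: it is imported verbatim from~\cite{cs16} (Prop.~2.9), so there is no in-text argument to compare against. Your direct route is a clean, self-contained justification. The key points all check out: in the preorder $(\mathcal{F},\supseteq)$ the set $I$ is the least element, so $\mathrm{p}^{I}\colon\mathbf{A}(I)\mor\varinjlim_{\boldsymbol{\mathcal{F}}}\boldsymbol{\mathcal{A}}(\mathcal{F})$ sends $a$ to $[(a,I)]$, and unwinding $\Phi^{(\boldsymbol{\mathcal{F}},\boldsymbol{\mathcal{A}}(\mathcal{F}))}$ gives $\mathrm{Ker}(\mathrm{p}^{I})={\equiv}^{\mathcal{F}}$ with no support hypothesis, exactly as you say. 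You also locate the role of the constant-support hypothesis correctly: surjectivity of $\mathrm{p}^{I}$ requires extending a tuple $y\in\prod_{j\in J}A^{j}_{s}$ to all of $I$, which is possible precisely because $J\neq\varnothing$ (as $\varnothing\notin\mathcal{F}$) forces $s\in\mathrm{supp}_{S}(A^{j_{0}})$ for some $j_{0}\in J$ and hence, by constant support, $A^{i}_{s}\neq\varnothing$ for every $i\in I$; this is exactly the step that fails in general for many-sorted algebras, which is the point of the remark following Proposition~\ref{CaracProdRed}. The concluding appeal to the universal property of the quotient together with the fact that a bijective homomorphism of $\Sigma$-algebras is an isomorphism is standard. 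Your observation that the alternative route through Lemma~\ref{ConstSuppDerivedFamilyMSSet} and Proposition~\ref{isomorfismoAlgebrasSoporteConstante} would still leave a nontrivial identification to perform is also apt; the direct first-isomorphism-theorem argument is the more economical one.
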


\begin{remark}
Let $I$ be a nonempty set, $\mathcal{F}$ a filter on $I$, and $(\mathbf{A}^{i})_{i\in I}$ a family of $\Sigma$-algebras. If $\prod^{\mathcal{F}}_{i\in I}\mathbf{A}^{i} = \varinjlim_{\boldsymbol{\mathcal{F}}}\boldsymbol{\mathcal{A}}$ is isomorphic to $\prod_{i\in I}\mathbf{A}^{i}/{\equiv}^{\mathcal{F}}$ and $\mathcal{F}$ is such that, for every $s\in S$, $\{i\in I\mid s\in\mathrm{supp}_{S}(A^{i})\}\in \mathcal{F}$, then $(\mathbf{A}^{i})_{i\in I}$ is a family with constant support.
\end{remark}

\begin{corollary}\label{CaracUltraProd}
Let $I$ be a nonempty set, $\mathcal{F}$ an ultrafilter on $I$, and $(\mathbf{A}^{i})_{i\in I}$ a family of $\Sigma$-algebras. If $(\mathbf{A}^{i})_{i\in I}$ is a family with constant support, then $\prod^{\mathcal{F}}_{i\in I}\mathbf{A}^{i}$ is isomorphic to $\prod_{i\in I}\mathbf{A}^{i}/{\equiv}^{\mathcal{F}}$.
\end{corollary}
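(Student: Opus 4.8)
The plan is to observe that this is nothing but the specialization of Proposition~\ref{CaracProdRed} to the case in which the filter happens to be an ultrafilter. Since every ultrafilter $\mathcal{F}$ on $I$ is, in particular, a filter on $I$, and since the remaining hypotheses---that $I$ is nonempty and that $(\mathbf{A}^{i})_{i\in I}$ is a family with constant support---are exactly those assumed in Proposition~\ref{CaracProdRed}, the conclusion that $\prod^{\mathcal{F}}_{i\in I}\mathbf{A}^{i}$ is isomorphic to $\prod_{i\in I}\mathbf{A}^{i}/{\equiv}^{\mathcal{F}}$ follows immediately from that proposition.

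There is essentially no obstacle here, since all the work has already been done in Proposition~\ref{CaracProdRed} (which in turn rests on Lemma~\ref{ConstSuppDerivedFamilyMSSet} and Proposition~\ref{isomorfismoAlgebrasSoporteConstante}). The only thing worth remarking is why it is nonetheless convenient to isolate the ultrafilter case as a separate statement: it is precisely this form---ultraproduct identified, up to isomorphism, with the quotient of the full product by the congruence $\equiv^{\mathcal{F}}$---that will be invoked repeatedly in Section~3, where the ultraproduct $\prod_{i\in I}\mathbf{A}^{i}/{\equiv}^{\mathcal{F}}$ appears in the statement of the many-sorted Mariano--Miraglia theorem. Thus the proof consists in a single sentence: apply Proposition~\ref{CaracProdRed}.

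\begin{proof}
Since $\mathcal{F}$ is an ultrafilter on $I$, it is in particular a filter on $I$. Therefore, as $(\mathbf{A}^{i})_{i\in I}$ is a family with constant support, we can apply Proposition~\ref{CaracProdRed} and conclude that $\prod^{\mathcal{F}}_{i\in I}\mathbf{A}^{i}$ is isomorphic to $\prod_{i\in I}\mathbf{A}^{i}/{\equiv}^{\mathcal{F}}$.
\end{proof}
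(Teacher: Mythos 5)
Your proof is correct and matches the paper's intent exactly: the corollary is stated as an immediate consequence of Proposition~\ref{CaracProdRed}, obtained by specializing the filter to an ultrafilter, which is precisely what you do.
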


\section{A sufficient condition for a profinite $\Sigma$-algebra to be a retract of an ultraproduct of finite $\Sigma$-algebras.}

In this section, after recalling that for a nonempty upward directed preordered set $\mathbf{I}$ the set of all final sections of $\mathbf{I}$ is included in an ultrafilter on $I$ and stating that for a projective system of $S$-sorted sets  $\mathcal{A} = ((A^{i})_{i\in I},(f^{j,i})_{(i,j)\in \leq})$ relative to $\mathbf{I}$ and a filter $\mathcal{F}$ on $I$ such that the filter of the final sections of $\mathbf{I}$ is contained in $\mathcal{F}$, if the $I$-indexed family of $S$-sorted sets $(A^{i})_{i\in I}$ is with constant support, then the derived family $(A(J))_{J\in \mathcal{F}}$ is an $\mathcal{F}$-indexed family of $S$-sorted sets with constant support, we prove that if $\mathbf{A} = \varprojlim_{\mathbf{I}}\boldsymbol{\mathcal{A}}$ is a profinite $\Sigma$-algebra, where $\boldsymbol{\mathcal{A}}$ is a projective system of finite $\Sigma$-algebras relative to $\mathbf{I}$ with $\boldsymbol{\mathcal{A}} = ((\mathbf{A}^{i})_{i\in I},(f^{j,i})_{(i,j)\in \leq})$, and the $I$-indexed family of $\Sigma$-algebras $(\mathbf{A}^{i})_{i\in I}$ is with constant support, then $\mathbf{A}$ is a retract of $\prod_{i\in I}\mathbf{A}^{i}/\equiv^{\mathcal{F}}$.
\begin{assumption}
From now on we assume all preordered sets to be nonempty and upward directed.
\end{assumption}

\begin{proposition}
Let $\mathbf{I}$ be a preordered set. Then the subset $\{\Uparrow\!i\mid i\in I\}$ of $\mathrm{Sub}(I)$, where, for every $i\in I$, $\Uparrow\!i = \{j\in I\mid i\leq j\}$, the final section at $i$ of $\mathbf{I}$, is a filter basis on $I$, i.e., $\{\Uparrow\!i\mid i\in I\}\neq \varnothing$, $\varnothing\not\in \{\Uparrow\!i\mid i\in I\}$, and, for every $i, j\in I$ there exists a $k\in I$ such that  $\Uparrow\!k\subseteq \Uparrow\!i\cap\Uparrow\!j$.
\end{proposition}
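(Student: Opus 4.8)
The plan is to verify directly the three defining clauses of a filter basis, each of which will turn out to be an immediate consequence of one of the standing hypotheses on $\mathbf{I}$: nonemptiness, reflexivity of $\leq$, and upward directedness. Since there is no real mathematical content here, the work consists only in pinning down which axiom of a preordered set is doing the job in each clause.

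First I would dispatch $\{\Uparrow\!i\mid i\in I\}\neq\varnothing$: since $I\neq\varnothing$ by the running assumption, any choice of $i\in I$ produces an element $\Uparrow\!i$ of the family, so the family is nonempty. For the clause $\varnothing\notin\{\Uparrow\!i\mid i\in I\}$ I would invoke reflexivity of the preorder: for every $i\in I$ one has $i\leq i$, hence $i\in\Uparrow\!i$, so $\Uparrow\!i\neq\varnothing$; since an arbitrary member of the family has this form, $\varnothing$ is not among them.

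The only clause with a trace of content is the downward directedness of the family under inclusion. Given $i,j\in I$, upward directedness of $\mathbf{I}$ furnishes a $k\in I$ with $i\leq k$ and $j\leq k$; then for every $l\in\Uparrow\!k$, i.e.\ every $l$ with $k\leq l$, transitivity of $\leq$ yields $i\leq l$ and $j\leq l$, that is $l\in\Uparrow\!i\cap\Uparrow\!j$. Hence $\Uparrow\!k\subseteq\Uparrow\!i\cap\Uparrow\!j$, which is exactly what is required of a filter basis.

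I do not expect any genuine obstacle: the whole argument is an unwinding of the preorder axioms together with the standing assumption that $\mathbf{I}$ is nonempty and upward directed, the single point worth stating explicitly being that it is transitivity of $\leq$ that allows one to push an element of $\Uparrow\!k$ into both $\Uparrow\!i$ and $\Uparrow\!j$.
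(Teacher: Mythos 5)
Your proof is correct and is exactly the routine verification the paper has in mind (it omits the proof as immediate): nonemptiness of $I$ gives the first clause, reflexivity of $\leq$ gives the second, and upward directedness together with transitivity gives $\Uparrow\!k\subseteq\Uparrow\!i\cap\Uparrow\!j$. Nothing to add.
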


We recall that for a preordered set $\mathbf{I}$, and according to the standard definition, the filter on $I$ generated by the filter basis $\{\Uparrow\!i\mid i\in I\}$ on $I$, which is called the filter of the final sections of $\mathbf{I}$ or the Fr\'{e}chet filter of $\mathbf{I}$, is
$$
\textstyle
\{I\}\cup \{J\subseteq I\mid \exists\, n\in \mathbb{N}-1\,\exists\,(i_{\alpha})_{\alpha\in n}\in I^{n}\,(\bigcap_{\alpha\in n}\Uparrow\!i_{\alpha}\subseteq J)\},
$$
which, on the basis of the above assumption, is precisely $\{J\subseteq I\mid \exists\,i\in I\,(\Uparrow\!i\subseteq J)\}$.
Moreover, since every filter $\mathcal{F}$ on a nonempty set $I$ is contained in an ultrafilter on $I$, it follows that $\{\Uparrow\!i\mid i\in I\}$ is contained in an ultrafilter on $I$.

%\begin{proposition}
%Let $\mathbf{I}$ be a nonempty upward directed poset, $(A^{i})_{i\in I}$ an $I$-indexed family of $S$-sorted sets with constant support, and $\mathcal{F}$ a filter on $I$ such that the filter of sections of $\mathbf{I}$ is contained in $\mathcal{F}$. Then $(A(F))_{F\in \mathcal{F}}$, where, for every $J\in \mathcal{F}$, $A(J) = \prod_{j\in J}A^{j},$ is an $\mathcal{F}$-indexed family of $S$-sorted sets with constant support, i.e., for every $J,K\in\mathcal{F}$, $\mathrm{supp}_{S}(A(J)) = \mathrm{supp}_{S}(A(K))$.
%\end{proposition}

From Lemma~\ref{ConstSuppDerivedFamilyMSSet} we obtain the following proposition.

\begin{proposition}
Let $\mathbf{I}$ be a preordered set and $\mathcal{F}$ a filter on $I$ such that the filter of the final sections of $\mathbf{I}$ is contained in $\mathcal{F}$. If $(A^{i})_{i\in I}$ is an $I$-indexed family of $S$-sorted sets with constant support, then, for every $i\in I$ and every $J\in \mathcal{F}$, $\mathrm{supp}_{S}(A^{i}) = \mathrm{supp}_{S}(A(J))$. Therefore $(A(J))_{J\in \mathcal{F}}$ is an $\mathcal{F}$-indexed family of $S$-sorted sets with constant support.
\end{proposition}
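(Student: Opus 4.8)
The plan is to reduce the statement directly to Lemma~\ref{ConstSuppDerivedFamilyMSSet}. By the standing Assumption, $\mathbf{I} = (I,\leq)$ is nonempty and upward directed; in particular $I$ is a nonempty set in $\ensuremath{\boldsymbol{\mathcal{U}}}$ and $\mathcal{F}$ is a filter on it. I would first point out that the additional hypothesis requiring the filter of the final sections of $\mathbf{I}$ to be contained in $\mathcal{F}$ plays no role in establishing the present conclusion; it is recorded in the statement only because it is what makes $\mathcal{F}$ an admissible choice of filter when, in the sequel, $\mathcal{F}$ is paired with the projective system under consideration.

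Having isolated the relevant hypotheses, I would simply apply Lemma~\ref{ConstSuppDerivedFamilyMSSet} to the nonempty set $I$, to the filter $\mathcal{F}$ on $I$, and to the $I$-indexed family $(A^{i})_{i\in I}$, which by hypothesis is with constant support. The Lemma yields at once that $\mathrm{supp}_{S}(A^{i}) = \mathrm{supp}_{S}(A(J))$ for every $i\in I$ and every $J\in\mathcal{F}$. From this the final assertion is immediate: for $J,K\in\mathcal{F}$, choosing any $i\in I$ gives $\mathrm{supp}_{S}(A(J)) = \mathrm{supp}_{S}(A^{i}) = \mathrm{supp}_{S}(A(K))$, so that $(A(J))_{J\in\mathcal{F}}$ is an $\mathcal{F}$-indexed family of $S$-sorted sets with constant support.

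There is essentially no obstacle here: all the content resides in Lemma~\ref{ConstSuppDerivedFamilyMSSet}, which in turn rests on part~(3) of Proposition~\ref{propssupport} together with the elementary observation that a filter contains no empty set, so that every $J\in\mathcal{F}$ is nonempty and $\mathrm{supp}_{S}(A(J)) = \bigcap_{j\in J}\mathrm{supp}_{S}(A^{j})$ is an intersection of a nonempty family of mutually equal sets. The only point deserving a moment's attention is to verify that the hypotheses of the Lemma are genuinely in force, in particular that $I$ is nonempty, which is exactly what the Assumption guarantees.
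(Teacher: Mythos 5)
Your proposal is correct and coincides with the paper's own argument: the paper introduces this proposition with the phrase ``From Lemma~\ref{ConstSuppDerivedFamilyMSSet} we obtain the following proposition,'' i.e., it too treats the statement as an immediate instance of that lemma, with the hypothesis on the filter of final sections playing no role here. Your added remarks on the nonemptiness of $I$ (from the standing Assumption) and of each $J\in\mathcal{F}$ are exactly the points that make the reduction legitimate.
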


\begin{remark}
It is not true, in general, that if there are $j,k\in I$ such that $\mathrm{supp}_{S}(A^{j})\neq \mathrm{supp}_{S}(A^{k})$, then there are $J,K\in \mathcal{F}$ such that $\mathrm{supp}_{S}(A(J)) \neq \mathrm{supp}_{S}(A(K))$ or, what is equivalent, that if $(A(J))_{J\in \mathcal{F}}$ is an $\mathcal{F}$-indexed family of $S$-sorted sets with constant support, then $(A^{i})_{i\in I}$ is an $I$-indexed family of $S$-sorted sets with constant support. (This would be, trivially, fulfilled, e.g., if $(A(J))_{J\in \mathcal{F}}$ were an $\mathcal{F}$-indexed family of $S$-sorted sets with constant support and, for every $i\in I$ and every $j\in \Uparrow\!i$, $\mathrm{supp}_{S}(A^{i})\subseteq \mathrm{supp}_{S}(A^{j})$.) As an example, consider $S = \mathbb{N}$, $I = \mathbb{N}$, $\mathcal{F}$ the Fr\'{e}chet filter on $\mathbb{N}$, and $(A^{n})_{n\in \mathbb{N}}$ the $\mathbb{N}$-indexed family of $\mathbb{N}$-sorted sets, where, for every $n\in \mathbb{N}$, the $\mathbb{N}$-sorted set $A^{n} = (A^{n}_{m})_{m\in \mathbb{N}}$ is such that, for every $m\in \mathbb{N}$, $A^{n}_{m} = \varnothing$, if $n\neq m$, and $A^{n}_{m} = 1 = \{0\}$, otherwise.
\end{remark}

\begin{proposition}
Let $\mathbf{I}$ be a preordered set, $\mathcal{F}$ a filter on $I$ such that the filter of the final  sections of $\mathbf{I}$ is contained in $\mathcal{F}$, and $(A^{i})_{i\in I}$ an $I$-indexed family of $S$-sorted sets. Then the following assertions are equivalent:
\begin{enumerate}
\item $(A^{i})_{i\in I}$ is an $I$-indexed family of $S$-sorted sets with constant support.
\item For every $i\in I$ and every $J\in \mathcal{F}$, $\mathrm{supp}_{S}(A^{i}) = \mathrm{supp}_{S}(A(J))$.
\end{enumerate}
\end{proposition}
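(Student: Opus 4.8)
The plan is to prove the two implications $(1)\Rightarrow(2)$ and $(2)\Rightarrow(1)$ separately, and to note that neither requires genuinely new work beyond Lemma~\ref{ConstSuppDerivedFamilyMSSet}.

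For $(1)\Rightarrow(2)$, I would simply invoke the proposition obtained just above from Lemma~\ref{ConstSuppDerivedFamilyMSSet}: under the standing hypothesis that the filter of the final sections of $\mathbf{I}$ is contained in $\mathcal{F}$, assuming that $(A^{i})_{i\in I}$ has constant support yields exactly that $\mathrm{supp}_{S}(A^{i}) = \mathrm{supp}_{S}(A(J))$ for every $i\in I$ and every $J\in\mathcal{F}$, which is clause $(2)$.

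For $(2)\Rightarrow(1)$, suppose that $\mathrm{supp}_{S}(A^{i}) = \mathrm{supp}_{S}(A(J))$ holds for every $i\in I$ and every $J\in\mathcal{F}$. Since $\mathcal{F}$ is a filter on the nonempty set $I$, it is nonempty; fix one member $J_{0}\in\mathcal{F}$ (for concreteness $J_{0} = I$). Instantiating the hypothesis at this fixed $J_{0}$, for every $i\in I$ we obtain $\mathrm{supp}_{S}(A^{i}) = \mathrm{supp}_{S}(A(J_{0}))$, and the right-hand side is a single subset of $S$ independent of $i$. Hence, for all $i,j\in I$, $\mathrm{supp}_{S}(A^{i}) = \mathrm{supp}_{S}(A(J_{0})) = \mathrm{supp}_{S}(A^{j})$, which is precisely the definition of $(A^{i})_{i\in I}$ being a family of $S$-sorted sets with constant support, so $(1)$ holds.

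There is essentially no obstacle: the substance of the equivalence is carried entirely by Lemma~\ref{ConstSuppDerivedFamilyMSSet} (equivalently, by the preceding proposition). The only point deserving a moment's care is the trivial but necessary observation that a filter on a nonempty set is itself nonempty, so that clause $(2)$ is not vacuous and can legitimately be instantiated at some $J_{0}\in\mathcal{F}$.
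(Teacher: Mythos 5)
Your proof is correct. The direction $(1)\Rightarrow(2)$ is handled exactly as in the paper, by appeal to the proposition derived from Lemma~\ref{ConstSuppDerivedFamilyMSSet}. For $(2)\Rightarrow(1)$ your route is slightly different from, and in fact more elementary than, the paper's: you exploit the full logical strength of clause $(2)$ --- that the equality $\mathrm{supp}_{S}(A^{i}) = \mathrm{supp}_{S}(A(J_{0}))$ holds for \emph{every} $i$ at a single fixed $J_{0}\in\mathcal{F}$ --- so that all the supports $\mathrm{supp}_{S}(A^{i})$ coincide with one and the same set and constancy is immediate. The paper instead fixes $k,\ell\in I$, instantiates $(2)$ at $J=\Uparrow\!k$ (which lies in $\mathcal{F}$ by the hypothesis on final sections) to get $\mathrm{supp}_{S}(A(\Uparrow\!k)) = \mathrm{supp}_{S}(A^{\ell})$, and then uses Proposition~\ref{propssupport} (the support of a product is the intersection of the supports, so $\mathrm{supp}_{S}(A(\Uparrow\!k))\subseteq\mathrm{supp}_{S}(A^{k})$) to deduce $\mathrm{supp}_{S}(A^{\ell})\subseteq\mathrm{supp}_{S}(A^{k})$, and symmetrically the reverse inclusion. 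Your argument buys simplicity: it needs neither Proposition~\ref{propssupport} nor the hypothesis that the filter of final sections is contained in $\mathcal{F}$ for this direction, only that $\mathcal{F}$ is nonempty (e.g.\ $I\in\mathcal{F}$), a point you rightly flag. The paper's version, by working with the sections $\Uparrow\!k$, keeps the argument closer in spirit to the surrounding remark about when constancy can fail, but for the implication as stated your shortcut is perfectly sound.
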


\begin{proof}
Since it is easy to check that (1) entails (2), we restrict ourselves to show that (2) entails (1).
Let us suppose that, for every $i\in I$ and every $J\in \mathcal{F}$, $\mathrm{supp}_{S}(A^{i}) = \mathrm{supp}_{S}(A(J))$. To prove that $(A^{i})_{i\in I}$ is an $I$-indexed family of $S$-sorted sets with constant support, let $k$ and $\ell$ be elements of $I$. Then we have that $\mathrm{supp}_{S}(A(\Uparrow\!k)) = \mathrm{supp}_{S}(A^{\ell})$. Hence, by Proposition~\ref{propssupport}, $\mathrm{supp}_{S}(A^{\ell})\subseteq \mathrm{supp}_{S}(A^{k})$. By a similar argument, $\mathrm{supp}_{S}(A^{k}) \subseteq \mathrm{supp}_{S}(A^{\ell})$. Hence $\mathrm{supp}_{S}(A^{k}) = \mathrm{supp}_{S}(A^{\ell})$. Therefore $(A^{i})_{i\in I}$ is an $I$-indexed family of $S$-sorted sets with constant support.
\end{proof}

%\begin{definition}
%A projective system of $S$-sorted sets $(\mathbf{I},\mathcal{A})$ is \emph{surjective} if, for every $(i,j)\in \leq$, the transition $S$-sorted mapping $f^{j,i}$ from $A^{j}$ to $A^{i}$ is surjective.
%\end{definition}
%
%\begin{proposition}
%Let $(\mathbf{I},\mathcal{A})$ be a surjective projective system of $S$-sorted sets. Then $(A^{i})_{i\in I}$ is an $I$-indexed family of $S$-sorted sets with constant support.
%\end{proposition}
%
%\begin{proof}
%Let $i$ and $j$ be elements of $I$. Then there exists a $k\in I$ such that $i, j\leq k$. But, by hypothesis, $f^{k,i}\colon A^{k}\mor A^{i}$ and $f^{k,j}\colon A^{k}\mor A^{j}$ are surjective. Hence,  by Proposition~\ref{propssupport}, we have that  $\mathrm{supp}_{S}(A^{k}) = \mathrm{supp}_{S}(A^{i})$ and $\mathrm{supp}_{S}(A^{k}) = \mathrm{supp}_{S}(A^{j})$. Thus $\mathrm{supp}_{S}(A^{i}) = \mathrm{supp}_{S}(A^{j})$.
%\end{proof}

From Lemma~\ref{ConstSuppDerivedFamilyMSSet} we obtain the following proposition.

\begin{proposition}\label{ConstSuppDerivedFamilyAlg}
Let $\mathbf{I}$ be a preordered set, $\mathcal{A} = ((A^{i})_{i\in I},(f^{j,i})_{(i,j)\in \leq})$ a projective system of $S$-sorted sets relative to $\mathbf{I}$, and $\mathcal{F}$ a filter on $I$ such that the filter of the final sections of $\mathbf{I}$ is contained in $\mathcal{F}$. If the $I$-indexed family of $S$-sorted sets $(A^{i})_{i\in I}$ is with constant support, then $(A(J))_{J\in \mathcal{F}}$ is an $\mathcal{F}$-indexed  family of $S$-sorted sets with constant support.
\end{proposition}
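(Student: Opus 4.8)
The plan is to recognize that this proposition is nothing more than Lemma~\ref{ConstSuppDerivedFamilyMSSet} applied to the underlying family of the projective system, so the proof will be very short. First I would recall that, by the standing assumption, $\mathbf{I} = (I,\leq)$ is nonempty and upward directed; in particular $I$ is a nonempty set and $\mathcal{F}$ is a filter on it, which is all that Lemma~\ref{ConstSuppDerivedFamilyMSSet} requires of the index set and the filter. (The hypothesis that the filter of the final sections of $\mathbf{I}$ is contained in $\mathcal{F}$ is not needed for this particular conclusion; it is carried along only because it is the standing hypothesis in this section.)

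Next I would observe that the transition $S$-sorted mappings $(f^{j,i})_{(i,j)\in\leq}$ of $\mathcal{A}$ are irrelevant to the statement: both the hypothesis (constant support of $(A^{i})_{i\in I}$) and the conclusion (constant support of $(A(J))_{J\in\mathcal{F}}$) involve only the underlying $I$-indexed family of $S$-sorted sets $(A^{i})_{i\in I}$, since, by definition, $A(J) = \prod_{j\in J}A^{j}$ for each $J\in\mathcal{F}$. Hence the proof amounts to feeding $(A^{i})_{i\in I}$ and $\mathcal{F}$ into Lemma~\ref{ConstSuppDerivedFamilyMSSet}: since $(A^{i})_{i\in I}$ is with constant support and $I\neq\varnothing$, that lemma yields $\mathrm{supp}_{S}(A^{i}) = \mathrm{supp}_{S}(A(J))$ for every $i\in I$ and every $J\in\mathcal{F}$, and therefore, for every $J,K\in\mathcal{F}$, $\mathrm{supp}_{S}(A(J)) = \mathrm{supp}_{S}(A(K))$, i.e., $(A(J))_{J\in\mathcal{F}}$ is an $\mathcal{F}$-indexed family of $S$-sorted sets with constant support.

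There is no real obstacle here: the entire mathematical content sits in Lemma~\ref{ConstSuppDerivedFamilyMSSet}, and the only thing to check --- that a projective system of $S$-sorted sets carries an underlying $I$-indexed family of $S$-sorted sets on which the lemma can act --- is immediate from the definition of a projective system. I would therefore simply state that the claim follows from Lemma~\ref{ConstSuppDerivedFamilyMSSet} applied to the family $(A^{i})_{i\in I}$ and the filter $\mathcal{F}$, and close the proof in a single sentence.
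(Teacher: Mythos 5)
Your proposal is correct and matches the paper exactly: the paper introduces this proposition with the sentence ``From Lemma~\ref{ConstSuppDerivedFamilyMSSet} we obtain the following proposition'' and gives no further argument, which is precisely your observation that the statement is Lemma~\ref{ConstSuppDerivedFamilyMSSet} applied to the underlying family $(A^{i})_{i\in I}$ and the filter $\mathcal{F}$, the transition mappings playing no role. Your remark that the hypothesis on the filter of final sections is not actually used here is also accurate.
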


\begin{remark}
Let $\mathbf{I}$ be a preordered set, $\mathcal{A} = ((A^{i})_{i\in I},(f^{j,i})_{(i,j)\in \leq})$ a projective system of $S$-sorted sets, and $\mathcal{F}$ a filter on $I$ such that the filter of the final sections of $\mathbf{I}$ is contained in $\mathcal{F}$. If, for every $(i,j)\in \leq$, $f^{j,i}$ is surjective, then, by Proposition~\ref{propssupport} and taking into account that $\mathbf{I}$ is upward directed, $(A^{i})_{i\in I}$ is an $I$-indexed family of $S$-sorted sets with constant support.
\end{remark}

%\begin{assumption}
%In the remainder of this section, unless otherwise stated, we fix an ultrafilter $\mathcal{F}$ on $I$ such that the filter basis $\{\Uparrow\!i\mid i\in I\}$ on $I$ is contained in $\mathcal{F}$.
%\end{assumption}

%\begin{definition}
%Let $\mathbf{A}$ be a $\Sigma$-algebra. We call $\mathbf{A}$ a \emph{regular} profinite $\Sigma$-algebra if it is a projective limit of a surjective projective system of finite $\Sigma$-algebras.
%\end{definition}

\begin{definition}
Let $\mathbf{A}$ be a $\Sigma$-algebra. We call $\mathbf{A}$ a \emph{profinite} $\Sigma$-algebra if it is a projective limit of a projective system of finite $\Sigma$-algebras.
\end{definition}

\begin{proposition}\label{MS Mariano and Miraglia}
Let $\mathbf{I}$ be a preordered set and $\mathcal{F}$ an ultrafilter on $I$ such that the filter basis $\{\Uparrow\!i\mid i\in I\}$ on $I$ is contained in $\mathcal{F}$. If $\mathbf{A} = \varprojlim_{\mathbf{I}}\boldsymbol{\mathcal{A}}$ is a profinite $\Sigma$-algebra, where $\boldsymbol{\mathcal{A}}$ is a projective system of finite $\Sigma$-algebras relative to $\mathbf{I}$ with $\boldsymbol{\mathcal{A}} = ((\mathbf{A}^{i})_{i\in I},(f^{j,i})_{(i,j)\in \leq})$, and the $I$-indexed family of finite  $\Sigma$-algebras $(\mathbf{A}^{i})_{i\in I}$ is with constant support, then $\mathbf{A}$ is a retract of $\prod_{i\in I}\mathbf{A}^{i}/\equiv^{\mathcal{F}}$.
\end{proposition}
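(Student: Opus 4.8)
The plan is to exhibit a section and a retraction directly. Since $\mathbf{A}=\varprojlim_{\mathbf{I}}\boldsymbol{\mathcal{A}}$ is by construction the subalgebra of $\prod_{i\in I}\mathbf{A}^{i}$ whose underlying $S$-sorted set is $\varprojlim_{\mathbf{I}}\mathcal{A}$, there is the inclusion homomorphism $\mathrm{inc}\colon \mathbf{A}\mor \prod_{i\in I}\mathbf{A}^{i}$, and I would take for the section the composite $s=\mathrm{pr}^{\equiv^{\mathcal{F}}}\circ\mathrm{inc}\colon \mathbf{A}\mor \prod_{i\in I}\mathbf{A}^{i}/\equiv^{\mathcal{F}}$, which is a homomorphism. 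The first thing to check is that $s$ is injective. If $a,b\in (\varprojlim_{\mathbf{I}}\mathcal{A})_{s}$ satisfy $s_{s}(a)=s_{s}(b)$, then $\mathrm{Eq}(a,b)\in \mathcal{F}$; given an arbitrary $i\in I$, the set $\Uparrow\!i$ belongs to $\mathcal{F}$ by hypothesis, so $\mathrm{Eq}(a,b)\cap\Uparrow\!i\in\mathcal{F}$, hence is nonempty, and for any $j$ in it the coherence condition defining $\varprojlim_{\mathbf{I}}\mathcal{A}$ gives $a_{i}=f^{j,i}_{s}(a_{j})=f^{j,i}_{s}(b_{j})=b_{i}$; thus $a=b$.

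For the retraction the idea is to take, coordinate by coordinate, the ``limit of the transition homomorphisms along $\mathcal{F}$'', which is available precisely because the $\mathbf{A}^{i}$ are finite. Fix $i\in I$ and $s\in S$ and let $x\in \prod_{k\in I}A^{k}_{s}$. The blocks $\{\,j\in \Uparrow\!i\mid f^{j,i}_{s}(x_{j})=v\,\}$, for $v$ ranging over the finite set $A^{i}_{s}$, partition $\Uparrow\!i\in\mathcal{F}$; since $\mathcal{F}$ is an ultrafilter exactly one of them lies in $\mathcal{F}$, and I let $\bar r^{i}_{s}([x])$ be the corresponding $v$. One then verifies that this does not depend on the chosen representative of the $\equiv^{\mathcal{F}}$-class (intersect the defining block with $\mathrm{Eq}(x,x')$), so that $\bar r^{i}=(\bar r^{i}_{s})_{s\in S}$ is an $S$-sorted mapping $\prod_{k\in I}\mathbf{A}^{k}/\equiv^{\mathcal{F}}\mor \mathbf{A}^{i}$; next, that $\bar r^{i}$ is in fact a homomorphism; and finally, that for every $(i,k)\in\leq$ one has $f^{k,i}\circ\bar r^{k}=\bar r^{i}$ (use $\Uparrow\!k\subseteq\Uparrow\!i$ and $f^{k,i}\circ f^{j,k}=f^{j,i}$ for $j\in\Uparrow\!k$, noting that a block in $\mathcal{F}$ is sent into a block in $\mathcal{F}$). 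Granting these, $(\prod_{k\in I}\mathbf{A}^{k}/\equiv^{\mathcal{F}},(\bar r^{i})_{i\in I})$ is a projective cone to $\boldsymbol{\mathcal{A}}$, so by the universal property of $\mathbf{A}=\varprojlim_{\mathbf{I}}\boldsymbol{\mathcal{A}}$ there is a unique homomorphism $r\colon \prod_{k\in I}\mathbf{A}^{k}/\equiv^{\mathcal{F}}\mor \mathbf{A}$ with $f^{i}\circ r=\bar r^{i}$ for every $i\in I$.

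It then remains to check $r\circ s=\mathrm{id}_{\mathbf{A}}$. Let $a\in (\varprojlim_{\mathbf{I}}\mathcal{A})_{s}$ and $i\in I$; by the coherence condition, $f^{j,i}_{s}(a_{j})=a_{i}$ for every $j\in\Uparrow\!i$, so the block $\{\,j\in\Uparrow\!i\mid f^{j,i}_{s}(a_{j})=a_{i}\,\}$ is the whole of $\Uparrow\!i\in\mathcal{F}$, whence $\bar r^{i}_{s}(s_{s}(a))=a_{i}=f^{i}_{s}(a)$. As this holds for every $i$, $r(s(a))=a$, so $\mathbf{A}$ is a retract of $\prod_{i\in I}\mathbf{A}^{i}/\equiv^{\mathcal{F}}$. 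Finally, since $(\mathbf{A}^{i})_{i\in I}$ has constant support and $\mathcal{F}$ is an ultrafilter, Corollary~\ref{CaracUltraProd} identifies $\prod_{i\in I}\mathbf{A}^{i}/\equiv^{\mathcal{F}}$ with the ultraproduct $\prod^{\mathcal{F}}_{i\in I}\mathbf{A}^{i}$, so $\mathbf{A}$ is a retract of an ultraproduct of finite $\Sigma$-algebras.

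I expect the only genuinely delicate point to be the verification that $\bar r^{i}$ is a homomorphism (and, in the same vein, that the $\bar r^{i}$ form a cone). Both reduce to the following: the operations of $\prod_{k\in I}\mathbf{A}^{k}/\equiv^{\mathcal{F}}$ are computed representative-wise and coordinate-wise; each transition $f^{j,i}$ preserves the operations; and a finite intersection of blocks lying in $\mathcal{F}$ again lies in $\mathcal{F}$ — this is where the arity being a word, hence finite, enters — so that evaluating a formal operation on several $\mathcal{F}$-limits and taking the $\mathcal{F}$-limit of the corresponding product element agree. Nullary operations require a separate but trivial line, the relevant block being all of $\Uparrow\!i$ because transition homomorphisms preserve constants. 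Everything else is sort-by-sort bookkeeping, and the constant-support hypothesis is used only in the last step, to replace the quotient $\prod_{i\in I}\mathbf{A}^{i}/\equiv^{\mathcal{F}}$ by the ultraproduct proper.
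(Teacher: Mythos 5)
Your proof is correct, and it proves exactly the stated conclusion, but it is organized differently from the paper's argument. The shared kernel is the same: because each $A^{i}_{s}$ is finite, the sets $\{\,j\in\Uparrow\!i\mid f^{j,i}_{s}(x_{j})=v\,\}$ for $v\in A^{i}_{s}$ decompose a member of $\mathcal{F}$ into finitely many pairwise disjoint pieces, so the ultrafilter selects a unique value, an ``$\mathcal{F}$-limit'' of the coordinates. The difference is where this limit lives. The paper first invokes constant support and Corollary~\ref{CaracUltraProd} to replace $\prod_{i\in I}\mathbf{A}^{i}/{\equiv}^{\mathcal{F}}$ by the inductive limit $\varinjlim_{\boldsymbol{\mathcal{F}}}\boldsymbol{\mathcal{A}}(\mathcal{F})$ over the directed set $(\mathcal{F},\supseteq)$, defines a homomorphism $h^{J,i}\colon\mathbf{A}(J)\mor\mathbf{A}^{i}$ for \emph{every} $J\in\mathcal{F}$, checks compatibility with the transition maps $\mathrm{p}^{K,J}$, and only then descends to the colimit; you instead work with the single case $J=I$ and descend to the quotient directly, the compatibility checks being replaced by the (easier) verification that your $\bar r^{i}_{s}$ is constant on ${\equiv}^{\mathcal{F}}$-classes. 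Your route is more economical and makes visible that the constant-support hypothesis is not needed to produce the retraction onto the quotient algebra itself, but only to identify that quotient with the ultraproduct as the paper defines it (namely as $\varinjlim_{\boldsymbol{\mathcal{F}}}\boldsymbol{\mathcal{A}}(\mathcal{F})$) --- a point the paper's own proof obscures by performing that identification at the outset. What the paper's longer route buys is the whole system of maps $h^{J,i}$ and the inductive-limit presentation, which are reused verbatim in Section~4 to assemble the natural transformations $h^{(\mathbf{I},\mathcal{F}_{\mathbf{I}}),\boldsymbol{\cdot}}$, $\mathfrak{p}^{\varphi}$ and $\mathfrak{q}^{\varphi}$; your construction would have to be re-expanded to recover that functorial statement. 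Two minor remarks: your separate injectivity check for $s$ is redundant, since $r\circ s=\mathrm{id}$ already forces $s$ to be a (split) monomorphism; and in the degenerate case $s\notin\mathrm{supp}_{S}(A^{i})$ your recipe still makes sense because then $\prod_{k\in I}A^{k}_{s}=\varnothing$ and $\bar r^{i}_{s}$ is the empty mapping, so nothing breaks.
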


\begin{proof}
By hypothesis, $(\mathbf{A}^{i})_{i\in I}$ is an $I$-indexed family of $\Sigma$-algebras with constant support, hence, by Proposition~\ref{ConstSuppDerivedFamilyAlg}, $(A(J))_{J\in \mathcal{F}}$ is an $\mathcal{F}$-indexed  family of $S$-sorted sets with constant support. Thus, by Corollary~\ref{CaracUltraProd}, $\prod_{i\in I}\mathbf{A}^{i}/{\equiv}^{\mathcal{F}}$ is isomorphic to $\prod^{\mathcal{F}}_{i\in I}\mathbf{A}^{i}$ which, we recall, is $\varinjlim_{\boldsymbol{\mathcal{F}}}\boldsymbol{\mathcal{A}}(\mathcal{F})$, the underlying $\Sigma$-algebra of the inductive limit $(\varinjlim_{\boldsymbol{\mathcal{F}}}\boldsymbol{\mathcal{A}}(\mathcal{F}),(\mathrm{p}^{J})_{J\in \mathcal{F}})$ of the inductive system $\boldsymbol{\mathcal{A}}(\mathcal{F})$ relative to $\boldsymbol{\mathcal{F}}$, where $\boldsymbol{\mathcal{F}}$ is $(\mathcal{F},\leq) = (\mathcal{F},\supseteq)$ and $\boldsymbol{\mathcal{A}}(\mathcal{F})$ is the ordered pair $((\mathbf{A}(J))_{J\in \mathcal{F}},(\mathrm{p}^{J,K})_{J\leq K})$. Therefore, since there exists a canonical embedding $\mathrm{in}^{\varprojlim_{\mathbf{I}}\boldsymbol{\mathcal{A}}}$ of $\mathbf{A} = \varprojlim_{\mathbf{I}}\boldsymbol{\mathcal{A}}$ into $\prod_{i\in I}\mathbf{A}^{i}$ and a canonical projection $\mathrm{pr}^{\equiv^{\mathcal{F}}}$ from $\prod_{i\in I}\mathbf{A}^{i}$ to $\prod_{i\in I}\mathbf{A}^{i}/{\equiv}^{\mathcal{F}}$, the problem comes down to show that there exists a homomorphism $h^{(\mathbf{I},\mathcal{F}),\boldsymbol{\mathcal{A}}}$ from $\prod^{\mathcal{F}}_{i\in I}\mathbf{A}^{i} = \varinjlim_{\boldsymbol{\mathcal{F}}}\boldsymbol{\mathcal{A}}(\mathcal{F})$ to $\mathbf{A} = \varprojlim_{\mathbf{I}}\boldsymbol{\mathcal{A}}$ such that the following diagram commutes:
$$\xymatrix@C=50pt@R=50pt{
\textstyle
\varprojlim_{\mathbf{I}}\boldsymbol{\mathcal{A}}
\ar[r]^{\mathrm{in}^{\varprojlim_{\mathbf{I}}\boldsymbol{\mathcal{A}}}}
\ar[rrd]_{\mathrm{id}_{\varprojlim_{\mathbf{I}}\boldsymbol{\mathcal{A}}}} &
\prod_{i\in I}\mathbf{A}^{i}
\ar[r]^-{\mathrm{pr}^{\equiv^{\mathcal{F}}}} &
\prod_{i\in I}\mathbf{A}^{i}/{\equiv}^{\mathcal{F}} \cong
\varinjlim_{\boldsymbol{\mathcal{F}}}\boldsymbol{\mathcal{A}}(\mathcal{F}) \ar[d]^{h^{(\mathbf{I},\mathcal{F}),\boldsymbol{\mathcal{A}}}} \\
{} & {} & \varprojlim_{\mathbf{I}}\boldsymbol{\mathcal{A}}
}
$$

To define $h^{(\mathbf{I},\mathcal{F}),\boldsymbol{\mathcal{A}}}$ (subject to satisfying the requirement just set out), we have to start by defining, for every $J\in \mathcal{F}$ and every $i\in I$, a homomorphism $h^{J,i}$ from $\mathbf{A}(J) = \prod_{j\in J}\mathbf{A}^{j}$ to $\mathbf{A}^{i}$ in such a way that, for every $J,K\in \mathcal{F}$ such that $K\supseteq J$, the homomorphisms $h^{J,i}$ from $\mathbf{A}(J)$ to $\mathbf{A}^{i}$ and $h^{K,i}$ from $\mathbf{A}(K)$ to $\mathbf{A}^{i}$ are compatible with the transition homomorphism $\mathrm{p}^{K,J}$ from $\mathbf{A}(K)$ to $\mathbf{A}(J)$. Afterwards, using the universal property of $\varinjlim_{\boldsymbol{\mathcal{F}}}\boldsymbol{\mathcal{A}}(\mathcal{F})$, we define a homomorphism $h^{i}$ from such an inductive limit to $\mathbf{A}^{i}$, for every $i\in I$. Finally, using the universal property of $\mathbf{A} = \varprojlim_{\mathbf{I}}\boldsymbol{\mathcal{A}}$, we obtain the desired homomorphism $h^{(\mathbf{I},\mathcal{F}),\boldsymbol{\mathcal{A}}}$ from $\varinjlim_{\boldsymbol{\mathcal{F}}}\boldsymbol{\mathcal{A}}(\mathcal{F})$ to $\mathbf{A} = \varprojlim_{\mathbf{I}}\boldsymbol{\mathcal{A}}$.

Let $J$ be an element of $\mathcal{F}$ and $i\in I$. We now proceed to define the homomorphism $h^{J,i} = (h^{J,i}_{s})_{s\in S}$ from $\mathbf{A}(J) = \prod_{j\in J}\mathbf{A}^{j}$ to $\mathbf{A}^{i}$.

For $s\in \mathrm{supp}_{S}(A^{i})$, $x\in A(J)_{s} = \prod_{j\in J}A^{j}_{s}$, and $y\in A^{i}_{s}$, let $V^{J,i,s}(x,y)$ be the subset of $J\cap\Uparrow\!i$ defined as follows:
$$
V^{J,i,s}(x,y) = \{j\in J\cap\Uparrow\!i\mid f^{j,i}_{s}(x_{j}) = y\}.
$$
The just stated definition is sound. In fact, $J\cap\Uparrow\!i\in \mathcal{F}$ since $\mathcal{F}$ is an ultrafilter such that  $\{\Uparrow\!i\mid i\in I\}\subseteq \mathcal{F}$ and $J\in \mathcal{F}$. Moreover, since, by hypothesis, $(\mathbf{A}^{i})_{i\in I}$ is a family of $\Sigma$-algebras with constant support we have that, for every $J\in \mathcal{F}$ and every $i\in I$, $\mathrm{supp}_{S}(A(J)) = \mathrm{supp}_{S}(A^{i})$.

For $J\in \mathcal{F}$, $i\in I$, $s\in \mathrm{supp}_{S}(A^{i})$, $x\in A(J)_{s} = \prod_{j\in J}A^{j}_{s}$, and $y,z\in A^{i}_{s}$, if $y\neq z$, then $V^{J,i,s}(x,y)\cap V^{J,i,s}(x,z) = \varnothing$. This follows from the fact that $f^{j,i}_{s}$ is, in particular, an $S$-sorted mapping.

We next prove that $J\cap\Uparrow\!i = \bigcup_{y\in A^{i}_{s}}V^{J,i,s}(x,y)$. It is obvious that $J\cap\Uparrow\!i$ contains  $\bigcup_{y\in A^{i}_{s}}V^{J,i,s}(x,y)$. Reciprocally, let $j$ be an element of $J\cap\Uparrow\!i$, then $i\leq j$ and for $y = f^{j,i}_{s}(x_{j})\in A^{i}_{s}$ we have that $j\in V^{J,i,s}(x,f^{j,i}_{s}(x_{j}))\subseteq \bigcup_{y\in A^{i}_{s}}V^{J,i,s}(x,y)$.

In what follows it is most useful to use a certain characterization of the notion of ultrafilter on a set. Specifically, a filter $\mathcal{G}$ on a nonempty set $I$ is an ultrafilter, i.e., a maximal filter, if, and only if, for every  $J,K\subseteq I$, if $J\cup K\in \mathcal{G}$, then $J\in \mathcal{G}$ or $K\in \mathcal{G}$. This characterization extends, by induction, up to  nonempty finite families of subsets of $I$. Moreover, we recall that $\varnothing$ does not belong to any filter. %\red{AquÌ es donde intervendr· que se trabaja con un ultrafiltro.}

Now, as we have, on the one hand, that $\mathcal{F}$ is an ultrafilter such that $J\cap\Uparrow\!i\in \mathcal{F}$ and, on the other hand, that $J\cap\Uparrow\!i = \bigcup_{y\in A^{i}_{s}}V^{J,i,s}(x,y)$, that $A^{i}_{s}$ is finite, and that if  $y,z\in A^{i}_{s}$ are such that $y\neq z$, then $V^{J,i,s}(x,y)\cap V^{J,i,s}(x,z) = \varnothing$, we infer that there exists a unique $y\in A^{i}_{s}$ such that $V^{J,i,s}(x,y)\in \mathcal{F}$. Therefore, we define the mapping $h^{J,i}_{s}$ from $A(J)_{s} = \prod_{j\in J}A^{j}_{s}$ to $A^{i}_{s}$ by assigning to $x\in A(J)_{s}$ the unique $y\in A^{i}_{s}$ such that $V^{J,i,s}(x,y)\in \mathcal{F}$. Thus, for $x\in A(J)_{s}$ and $y\in A^{i}_{s}$, $h^{J,i}_{s}(x) = y$ if, and only if, $V^{J,i,s}(x,y)\in \mathcal{F}$.

Our next goal is to show that, for every $i\in I$ and every $J,K\in \mathcal{F}$, if $K\supseteq J$, then the homomorphism $\mathrm{p}^{K,J}$ from $\mathbf{A}(K)$ to $\mathbf{A}(J)$ is such that $h^{J,i}\circ \mathrm{p}^{K,J} = h^{K,i}$ and that $h^{J,i} = (h^{J,i}_{s})_{s\in S}$ is a homomorphism from $\mathbf{A}(J) = \prod_{j\in J}\mathbf{A}^{j}$ to $\mathbf{A}^{i}$.

To verify that $h^{J,i}\circ \mathrm{p}^{K,J} = h^{K,i}$, i.e., that, for every $s\in S$, $h^{J,i}_{s}\circ \mathrm{p}^{K,J}_{s} = h^{K,i}_{s}$, we should check that, for every $a\in A(K)_{s}$, $h^{J,i}_{s}(\mathrm{p}^{K,J}_{s}(a)) = h^{K,i}_{s}(a)$. But, for every $s\in S$, if $a\in A(K)_{s}$, then, by definition,  $\mathrm{p}^{K,J}_{s}(a) = a\!\!\upharpoonright\!\! J$, where $a\!\!\upharpoonright\!\! J$ is the restriction of $a$ to $J$. Therefore we should check that $h^{J,i}_{s}(a\!\!\upharpoonright\!\! J) = h^{K,i}_{s}(a)$. Let $y$ be $h^{J,i}_{s}(a\!\!\upharpoonright\!\! J)$, i.e., $y$ is the unique element of $A^{i}_{s}$ such that $V^{J,i,s}(a\!\!\upharpoonright\!\! J,y)\in \mathcal{F}$. Then it happens that
$$
V^{J,i,s}(a\!\!\upharpoonright\!\! J,y) \subseteq  V^{K,i,s}(a,y).
$$
Let $j$ be an element of $V^{J,i,s}(a\!\!\upharpoonright\!\! J,y) (= V^{J,i,s}(a\!\!\upharpoonright\!\! J,h^{J,i}_{s}(a\!\!\upharpoonright\!\! J)))$. Then $j\in J\cap\Uparrow\!i$ and $f^{j,i}_{s}((a\!\!\upharpoonright\!\! J)_{j}) = f^{j,i}_{s}(a_{j}) = y$. But, since $J\subseteq K$, we have that $J\cap\Uparrow\!i\subseteq K\cap\Uparrow\!i$. Therefore $j\in K\cap\Uparrow\!i$ and $f^{j,i}_{s}(a_{j}) = y$, i.e., $j\in V^{K,i,s}(a,y)$. Moreover, because $V^{J,i,s}(a\!\!\upharpoonright\!\! J,y)\in \mathcal{F}$, $V^{J,i,s}(a\!\!\upharpoonright\!\! J,y) \subseteq  V^{K,i,s}(a,y)$, and $\mathcal{F}$ is a filter, $V^{K,i,s}(a,y)\in \mathcal{F}$. From this it follows that $h^{K,i}_{s}(a) = y$. Therefore $h^{J,i}_{s}(a\!\!\upharpoonright\!\! J) = h^{K,i}_{s}(a)$ and, consequently, $h^{J,i}\circ \mathrm{p}^{K,J} = h^{K,i}$.

To show that $h^{J,i} = (h^{J,i}_{s})_{s\in S}$ is a homomorphism from $\mathbf{A}(J) = \prod_{j\in J}\mathbf{A}^{j}$ to  $\mathbf{A}^{i}$ we have to check that, for every $(w,s)\in S^{\star}\times S$, every $\sigma\in \Sigma_{w,s}$, and every  $(a_{\alpha})_{\alpha\in\lvert w \rvert}\in A(J)_{w} = (\prod_{j\in J}A^{j})_{w} = (\prod_{j\in J}A^{j}_{w_{0}})\times\cdots\times (\prod_{j\in J}A^{j}_{w_{\lvert w \rvert-1}})$, it happens that
$$
h^{J,i}_{s}(F^{\mathbf{A}(J)}_{\sigma}((a_{\alpha})_{\alpha\in\lvert w \rvert})) = F^{\mathbf{A}^{i}}_{\sigma}(h^{J,i}_{w_{0}}(a_{0}),\ldots,h^{J,i}_{w_{\lvert w \rvert-1}}(a_{\lvert w \rvert-1})).
$$
Let us recall that the structural operation $F^{\mathbf{A}(J)}_{\sigma}$ of $\mathbf{A}(J)$ is defined, for every $(a_{\alpha})_{\alpha\in\lvert w \rvert}\in A(J)_{w}$, as:
$$
F^{\mathbf{A}(J)}_{\sigma}((a_{\alpha})_{\alpha\in\lvert w \rvert}) =
(F^{\mathbf{A}^{j}}_{\sigma}((a_{\alpha}(j))_{\alpha\in\lvert w \rvert}))_{j\in J}.
$$

Now, for every $\alpha\in\lvert w \rvert$, we have the subset %$V^{J,i,w_{\alpha}}(a_{\alpha},h^{J,i}_{w_{\alpha}}(a_{\alpha}))$
$$
V^{J,i,w_{\alpha}}(a_{\alpha},h^{J,i}_{w_{\alpha}}(a_{\alpha})) = \{j\in J\cap\Uparrow\!i\mid f^{j,i}_{w_{\alpha}}(a_{\alpha}(j)) = h^{J,i}_{w_{\alpha}}(a_{\alpha})\}.
$$
of $I$. But, for every $\alpha\in\lvert w \rvert$, we have that $V^{J,i,w_{\alpha}}(a_{\alpha},h^{J,i}_{w_{\alpha}}(a_{\alpha}))\in \mathcal{F}$. Thus, because $\mathcal{F}$ is a filter, we have that
$\bigcap_{\alpha\in\lvert w \rvert}V^{J,i,w_{\alpha}}(a_{\alpha},h^{J,i}_{w_{\alpha}}(a_{\alpha}))\in \mathcal{F}$. Moreover, we have the subset $V^{J,i,s}(F^{\mathbf{A}(J)}_{\sigma}((a_{\alpha})_{\alpha\in\lvert w \rvert}),
F^{\mathbf{A}^{i}}_{\sigma}((h^{J,i}_{w_{\alpha}}(a_{\alpha}))_{\alpha\in\lvert w \rvert}))$ of $I$, which, we recall, is
$$
\{j\in J\cap\Uparrow\!i\mid f^{j,i}_{s}(F^{\mathbf{A}^{j}}_{\sigma}((a_{\alpha}(j))_{\alpha\in\lvert w \rvert})) =
F^{\mathbf{A}^{i}}_{\sigma}((h^{J,i}_{w_{\alpha}}(a_{\alpha}))_{\alpha\in\lvert w \rvert})\}.
$$
Then it happens that
$$
\textstyle
\bigcap_{\alpha\in\lvert w \rvert}V^{J,i,w_{\alpha}}(a_{\alpha},h^{J,i}_{w_{\alpha}}(a_{\alpha}))\subseteq V^{J,i,s}(F^{\mathbf{A}(J)}_{\sigma}((a_{\alpha})_{\alpha\in\lvert w \rvert}),
F^{\mathbf{A}^{i}}_{\sigma}((h^{J,i}_{w_{\alpha}}(a_{\alpha}))_{\alpha\in\lvert w \rvert})).
$$
Let $j$ be an element of $\bigcap_{\alpha\in\lvert w \rvert}V^{J,i,w_{\alpha}}(a_{\alpha},h^{J,i}_{w_{\alpha}}(a_{\alpha}))$. Then, by definition, $i\leq j$ and, for every $\alpha\in \lvert w \rvert$, we have that $f^{j,i}_{w_{\alpha}}(a_{\alpha}(j)) = h^{J,i}_{w_{\alpha}}(a_{\alpha})$. But, $f^{j,i}$ is a homomorphism from $\mathbf{A}^{j}$ to $\mathbf{A}^{i}$, thus
\begin{align*}
f^{j,i}_{s}(F^{\mathbf{A}^{j}}_{\sigma}((a_{\alpha}(j))_{\alpha\in\lvert w \rvert})) &=  F^{\mathbf{A}^{i}}_{\sigma}(f^{j,i}_{w_{0}}(a_{0}(j)),\ldots,f^{j,i}_{w_{\lvert w \rvert-1}}(a_{\lvert w \rvert-1}(j))) \\
           &=  F^{\mathbf{A}^{i}}_{\sigma}(h^{J,i}_{w_{0}}(a_{0}),\ldots,h^{J,i}_{w_{\lvert w \rvert-1}}(a_{\lvert w \rvert-1})).
\end{align*}
Moreover, we have that
\begin{align*}
f^{j,i}_{s}(F^{\mathbf{A}^{J}}_{\sigma}((a_{\alpha})_{\alpha\in\lvert w \rvert})(j)) &=
f^{j,i}_{s}(F^{\mathbf{A}^{j}}_{\sigma}((a_{\alpha}(j))_{\alpha\in\lvert w \rvert}))  \\
         &=  F^{\mathbf{A}^{i}}_{\sigma}(h^{J,i}_{w_{0}}(a_{0}),\ldots,h^{J,i}_{w_{\lvert w \rvert-1}}(a_{\lvert w \rvert-1})).
\end{align*}
Therefore $j\in V^{J,i,s}(F^{\mathbf{A}(J)}_{\sigma}((a_{\alpha})_{\alpha\in\lvert w \rvert}),
F^{\mathbf{A}^{i}}_{\sigma}((h^{J,i}_{w_{\alpha}}(a_{\alpha}))_{\alpha\in\lvert w \rvert}))$. Hence, since $\mathcal{F}$ is a filter, we have that $V^{J,i,s}(F^{\mathbf{A}(J)}_{\sigma}((a_{\alpha})_{\alpha\in\lvert w \rvert}),
F^{\mathbf{A}^{i}}_{\sigma}((h^{J,i}_{w_{\alpha}}(a_{\alpha}))_{\alpha\in\lvert w \rvert}))\in \mathcal{F}$. So $h^{J,i} = (h^{J,i}_{s})_{s\in S}$ is a homomorphism from $\mathbf{A}(J) = \prod_{j\in J}\mathbf{A}^{j}$ to $\mathbf{A}^{i}$.
%Falta verificar lo de las cero-arias.

After having proved that, for every $i\in I$ and every $J,K\in \mathcal{F}$, if $K\supseteq J$, the homomorphism $\mathrm{p}^{K,J}$ from $\mathbf{A}(K)$ to $\mathbf{A}(J)$ is such that $h^{J,i}\circ \mathrm{p}^{K,J} = h^{K,i}$ and that $h^{J,i} = (h^{J,i}_{s})_{s\in S}$ is a homomorphism from $\mathbf{A}(J)$ to $\mathbf{A}^{i}$, we can assert, by the universal property of the inductive limit, that, for every $i\in I$, there exist a unique homomorphism $h^{i}$ from $\varinjlim_{\boldsymbol{\mathcal{F}}}\boldsymbol{\mathcal{A}}(\mathcal{F})$ to $\mathbf{A}^{i}$ such that, for every $J\in \mathcal{F}$, $h^{J,i} = h^{i}\circ \mathrm{p}^{J}$, where $\mathrm{p}^{J}$ is the canonical homomorphism from $\mathbf{A}(J)$ to  $\varinjlim_{\boldsymbol{\mathcal{F}}}\boldsymbol{\mathcal{A}}(\mathcal{F})$.

Our next goal is to show that, for every $i,k\in I$, if  $i\leq k$, then the homomorphisms $f^{k,i}\circ h^{k}$ and $h^{i}$ from $\varinjlim_{\boldsymbol{\mathcal{F}}}\boldsymbol{\mathcal{A}}(\mathcal{F})$ to $\mathbf{A}^{i}$ are equal.

To do this, we begin by showing that, for every $J\in \mathcal{F}$ and every $i,k\in I$, if  $i\leq k$, then $h^{J,i} = f^{k,i}\circ h^{J,k}$. Let us recall that, for every $s\in S$, the mapping $h^{J,i}_{s}$ from $A(J)_{s}$ to $A^{i}_{s}$ is defined by assigning to $x\in A(J)_{s}$ the unique $y\in A^{i}_{s}$ such that $V^{J,i,s}(x,y)\in \mathcal{F}$, where
$$
V^{J,i,s}(x,y) = \{j\in J\cap\Uparrow\!i\mid f^{j,i}_{s}(x_{j}) = y\}.
$$
It happens that $V^{J,k,s}(x,h^{J,k}_{s}(x))\subseteq V^{J,i,s}(x,f^{k,i}_{s}(h^{J,k}_{s}(x)))$. In fact, let $j$ be an element of $J\cap\Uparrow\!k$ such that $f^{j,k}_{s}(x_{j}) = h^{J,k}_{s}(x)$. Then, since $i\leq k$, we have that $j\in J\cap\Uparrow\!i$. It only remains to verify that $f^{j,i}_{s}(x_{j}) = f^{k,i}_{s}(h^{J,k}_{s}(x))$. But this follows from $f^{j,i} = f^{k,i}\circ f^{j,k}$ and $f^{j,k}_{s}(x_{j}) = h^{J,k}_{s}(x)$. However, since $V^{J,k,s}(x,h^{J,k}_{s}(x))\in \mathcal{F}$, we have that $V^{J,i,s}(x,f^{k,i}_{s}(h^{J,k}_{s}(x)))\in \mathcal{F}$. Thus, for every $s\in S$ and every $x\in A(J)_{s}$, $h^{J,i}_{s}(x) = f^{k,i}_{s}(h^{J,k}_{s}(x))$. Therefore $h^{J,i} = f^{k,i}\circ h^{J,k}$.

We are now in a position to show that, for $i\leq j$, $f^{k,i}\circ h^{k} = h^{i}$. In fact, we know that given $i,k\in I$ such that $i\leq k$, for every $J\in \mathcal{F}$, $h^{J,i} = f^{k,i}\circ h^{J,k}$, $h^{J,i} = h^{i}\circ \mathrm{p}^{J}$, and $h^{J,k} = h^{k}\circ \mathrm{p}^{J}$ or, what is equivalent, that the outer, the left and the right triangles of the following diagram commute:
$$\xymatrix@C=40pt@R=40pt{
 {} &  \mathbf{A}(J)\ar[d]_{\mathrm{p}^{J}}\ar@/_1pc/[ldd]_{h^{J,k}}\ar@/^1pc/[rdd]^{h^{J,i}} & {}   \\
 {} & \varinjlim_{\boldsymbol{\mathcal{F}}}\boldsymbol{\mathcal{A}}(\mathcal{F}) \ar[ld]_{h^{k}}
\ar[rd]^{h^{i}} & {}  \\
   \mathbf{A}_{k} \ar[rr]_{f^{k,i}} & {} & \mathbf{A}_{i}
    }
$$
Therefore $(f^{k,i}\circ h^{k})\circ \mathrm{p}^{J} = h^{i}\circ \mathrm{p}^{J}$. But any inductive limit is an (extremal epi)-sink, thus $f^{k,i}\circ h^{k} = h^{i}$.

After having proved that, for every $i,k\in I$, if $i\leq j$, then $f^{k,i}\circ h^{k} = h^{i}$, we can assert, by the universal property of the projective limit, that there exist a unique homomorphism $h^{(\mathbf{I},\mathcal{F}),\boldsymbol{\mathcal{A}}}$ from $\varinjlim_{\boldsymbol{\mathcal{F}}}\boldsymbol{\mathcal{A}}(\mathcal{F})$ to $\mathbf{A} = \varprojlim_{\mathbf{I}}\boldsymbol{\mathcal{A}}$ such that, for every $i\in I$, $f^{i}\circ h^{(\mathbf{I},\mathcal{F}),\boldsymbol{\mathcal{A}}} = h^{i}$, where $f^{i}$ is the canonical homomorphism from $\varprojlim_{\mathbf{I}}\boldsymbol{\mathcal{A}}$ to $\mathbf{A}^{i}$.

Finally, we proceed to show that $h^{(\mathbf{I},\mathcal{F}),\boldsymbol{\mathcal{A}}}\circ \mathrm{pr}^{\equiv^{\mathcal{F}}}\circ \mathrm{in}^{\varprojlim_{\mathbf{I}}\boldsymbol{\mathcal{A}}} = \mathrm{id}_{\varprojlim_{\mathbf{I}}\boldsymbol{\mathcal{A}}}$, where, we recall, $\mathrm{in}^{\varprojlim_{\mathbf{I}}\boldsymbol{\mathcal{A}}}$ is the canonical embedding of $\mathbf{A} = \varprojlim_{\mathbf{I}}\boldsymbol{\mathcal{A}}$ into $\prod_{i\in I}\mathbf{A}^{i}$ and $\mathrm{pr}^{\equiv^{\mathcal{F}}}$ the canonical projection from $\prod_{i\in I}\mathbf{A}^{i}$ to $\prod_{i\in I}\mathbf{A}^{i}/{\equiv}^{\mathcal{F}}$ which, we remark, coincides with $\mathrm{p}^{I}$, the canonical homomorphism from $\mathbf{A}(I) = \prod_{i\in I}\mathbf{A}^{i}$ to $\varinjlim_{\boldsymbol{\mathcal{F}}}\boldsymbol{\mathcal{A}}(\mathcal{F})$. But $\varprojlim_{\mathbf{I}}\boldsymbol{\mathcal{A}}$ is a projective limit and any projective limit is an (extremal mono)-source. Thus, to prove the above equality it suffices to prove that, for every $i\in I$, we have that
$$
f^{i}\circ (h^{(\mathbf{I},\mathcal{F}),\boldsymbol{\mathcal{A}}}\circ \mathrm{pr}^{\equiv^{\mathcal{F}}}\circ \mathrm{in}^{\varprojlim_{\mathbf{I}}\boldsymbol{\mathcal{A}}}) = f^{i}\circ \mathrm{id}_{\varprojlim_{\mathbf{I}}\boldsymbol{\mathcal{A}}} = f^{i}.
$$
We draw the following picture to provide a visual description of the current situtation.
$$\xymatrix@C=50pt@R=50pt{
\textstyle
\varprojlim_{\mathbf{I}}\boldsymbol{\mathcal{A}}
\ar[r]^{\mathrm{in}^{\varprojlim_{\mathbf{I}}\boldsymbol{\mathcal{A}}}}
\ar[rd]_{f^{i}} &
\mathbf{A}(I) = \prod_{i\in I}\mathbf{A}^{i}
\ar[r]^-{\mathrm{pr}^{\equiv^{\mathcal{F}}} = \mathrm{p}^{I}}
\ar @<1ex>[d]^-{h^{I,i}}\ar @<-1ex>[d]_-{\mathrm{pr}^{I,i}}&
\prod_{i\in I}\mathbf{A}^{i}/{\equiv}^{\mathcal{F}} \cong
\varinjlim_{\boldsymbol{\mathcal{F}}}\boldsymbol{\mathcal{A}}(\mathcal{F}) \ar[d]^{h^{(\mathbf{I},\mathcal{F}),\boldsymbol{\mathcal{A}}}}\ar[dl]_{h^{i}} \\
{} & \mathbf{A}^{i} & \varprojlim_{\mathbf{I}}\boldsymbol{\mathcal{A}}\ar[l]^{f^{i}}
}
$$
Let $i$ be an element of $I$. Then, as we have shown before, $f^{i}\circ h^{(\mathbf{I},\mathcal{F}),\boldsymbol{\mathcal{A}}} = h^{i}$ and $h^{i}\circ \mathrm{pr}^{\equiv^{\mathcal{F}}} = h^{i}\circ \mathrm{p}^{I} = h^{I,i}$. And, by definition of the canonical homomorphism $f^{i}$ of the projective limit $\varprojlim_{\mathbf{I}}\boldsymbol{\mathcal{A}}$, we have that $\mathrm{pr}^{I,i}\circ \mathrm{in}^{\varprojlim_{\mathbf{I}}\boldsymbol{\mathcal{A}}} = f^{i}$. Thus it only remains to prove that $h^{I,i}\circ \mathrm{in}^{\varprojlim_{\mathbf{I}}\boldsymbol{\mathcal{A}}} = \mathrm{pr}^{I,i}\circ \mathrm{in}^{\varprojlim_{\mathbf{I}}\boldsymbol{\mathcal{A}}}$. Let $s$ be an element of $S$ and $x$ an element of the $s$-th component of the underlying $S$-sorted set of $\varprojlim_{\mathbf{I}}\boldsymbol{\mathcal{A}}$. Then, taking into account that $\mathrm{in}^{\varprojlim_{\mathbf{I}}\boldsymbol{\mathcal{A}}}_{s}(x) = x$ and $\mathrm{pr}^{I,i}_{s}(\mathrm{in}^{\varprojlim_{\mathbf{I}}\boldsymbol{\mathcal{A}}}_{s}(x)) = x_{i}$, the sets
$$
V^{I,i,s}(\mathrm{in}^{\varprojlim_{\mathbf{I}}\boldsymbol{\mathcal{A}}}_{s}(x),
\mathrm{pr}^{I,i}_{s}(\mathrm{in}^{\varprojlim_{\mathbf{I}}\boldsymbol{\mathcal{A}}}_{s}(x))) = \{j\in I\cap\Uparrow\!i\mid f^{j,i}_{s}(x_{j}) = x_{i}\}
$$
and $\Uparrow\!i$ are, obviously, equal. But $\Uparrow\!i\in \mathcal{F}$. Hence $h^{I,i}_{s}(x) = x_{i} =  \mathrm{pr}^{I,i}_{s}(x)$. Therefore $h^{I,i}\circ \mathrm{in}^{\varprojlim_{\mathbf{I}}\boldsymbol{\mathcal{A}}} = \mathrm{pr}^{I,i}\circ \mathrm{in}^{\varprojlim_{\mathbf{I}}\boldsymbol{\mathcal{A}}} = f^{i}$.

We are now able to assert that $h^{(\mathbf{I},\mathcal{F}),\boldsymbol{\mathcal{A}}}\circ \mathrm{pr}^{\equiv^{\mathcal{F}}}\circ \mathrm{in}^{\varprojlim_{\mathbf{I}}\boldsymbol{\mathcal{A}}} = \mathrm{id}_{\varprojlim_{\mathbf{I}}\boldsymbol{\mathcal{A}}}$, thereby completing the proof.
\end{proof}

\begin{remark}
If, following L. Ribes and P. Zalesskii in~\cite{rz00}, but for many-sorted algebras, one defines a profinite $\Sigma$-algebra as a projective limit of a projective system of finite $\Sigma$-algebras $\boldsymbol{\mathcal{A}}$ relative to a nonempty upward directed \emph{poset} $\mathbf{I}$ such that the transition homomorphisms of $\boldsymbol{\mathcal{A}}$ are \emph{surjective}, then the just proved theorem still holds, since, by Proposition~\ref{propssupport}, the surjectivity of the transition homomorphisms entails that the $I$-indexed family of $\Sigma$-algebras $(\mathbf{A}^{i})_{i\in I}$ is with constant support. This fact, we think, shows the naturalness of the condition imposed on $(\mathbf{A}^{i})_{i\in I}$.
\end{remark}

\section{A category-theoretic view of the many-sorted version of Mariano-Miraglia theorem.}

Our objective in this section is to provide a categorial rendering of the many-sorted version of Mariano-Miraglia theorem stated in the previous section. To that purpose we consider, by means of the Grothendieck construction for a covariant functor $\mathrm{Uffs}$ from the category $\mathbf{UdPros}_{\neq\varnothing,\mathrm{cof}}^{\mathrm{inj}}$, of nonempty upward directed preordered sets and injective, isotone, and cofinal mappings  between them, to the category of sets, the category $\mathbf{Uffs} = \int_{\mathbf{UdPros}_{\neq\varnothing,\mathrm{cof}}^{\mathrm{inj}}}\mathrm{Uffs}$, in which the objects are the pairs formed by a nonempty upward directed preordered set and by an ultrafilter containing the filter of the final sections of it. Specifically, we show that there exists a functor from the category $\mathbf{Uffs}$ whose object mapping assigns to an object of it a natural transformation between two functors from a suitable category of projective systems of $\Sigma$-algebras to the category of $\Sigma$-algebras, which is a retraction. This is precisely the category-theoretic counterpart of the aforementioned theorem.

But before doing that, since it will prove to be necessary later, we next recall that given a mapping $\varphi$ from a nonempty set $I$ to another $P$ and given an ultrafilter $\mathcal{F}$ on $I$ the co-optimal lift of $\varphi\colon (I,\mathcal{F})\mor P$ is an ultrafilter on $P$.

\begin{proposition}\label{co-optimal lift Ulf}
Let $I$ be a nonempty set, $\mathcal{F}$ an ultrafilter on $I$, and $\varphi$ a mapping from $I$ to $P$. Then
$$
\mathcal{F}_{\varphi[\![\mathcal{F}]\!]} = \{Q\subseteq P\mid \exists\,J\in \mathcal{F}\,(\varphi[J]\subseteq Q)\},
$$
the co-optimal lift of $\varphi\colon (I,\mathcal{F})\mor P$, i.e., the filter on $P$ generated by the filter basis $\varphi[\![\mathcal{F}]\!] = \{\varphi[J]\mid J\in \mathcal{F}\}$ on $P$, is an ultrafilter on $P$.
\end{proposition}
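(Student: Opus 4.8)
The plan is to proceed in two stages: first verify that $\varphi[\![\mathcal{F}]\!]$ is genuinely a filter basis on $P$ (so that $\mathcal{F}_{\varphi[\![\mathcal{F}]\!]}$ is a proper filter on $P$), and then establish maximality by means of the standard complement characterization of ultrafilters.

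For the first stage, I would argue as follows. Since $\mathcal{F}$ is a filter on the nonempty set $I$, we have $I\in\mathcal{F}$, hence $\varphi[I]\in\varphi[\![\mathcal{F}]\!]$ and $\varphi[\![\mathcal{F}]\!]\neq\varnothing$; moreover $P=\varphi[I]\neq\varnothing$. If $J\in\mathcal{F}$ then $J\neq\varnothing$, because $\varnothing$ belongs to no filter, so $\varphi[J]\neq\varnothing$, and therefore $\varnothing\notin\varphi[\![\mathcal{F}]\!]$. Finally, given $J,K\in\mathcal{F}$ we have $J\cap K\in\mathcal{F}$ and $\varphi[J\cap K]\subseteq\varphi[J]\cap\varphi[K]$, which provides the required common refinement inside $\varphi[\![\mathcal{F}]\!]$. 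Consequently $\varphi[\![\mathcal{F}]\!]$ is a filter basis and its upward closure $\mathcal{F}_{\varphi[\![\mathcal{F}]\!]}=\{Q\subseteq P\mid\exists\,J\in\mathcal{F}\,(\varphi[J]\subseteq Q)\}$ is a proper filter on $P$.

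For the second stage I would invoke the characterization already recalled in the proof of Proposition~\ref{MS Mariano and Miraglia}: a proper filter $\mathcal{G}$ on a nonempty set is an ultrafilter if, and only if, for every subset $Q$ either $Q\in\mathcal{G}$ or its complement belongs to $\mathcal{G}$. So let $Q\subseteq P$ and consider $\varphi^{-1}[Q]\subseteq I$. Since $\mathcal{F}$ is an ultrafilter on $I$, either $\varphi^{-1}[Q]\in\mathcal{F}$ or $I\setminus\varphi^{-1}[Q]=\varphi^{-1}[P\setminus Q]\in\mathcal{F}$. In the first case, using the elementary inclusion $\varphi[\varphi^{-1}[Q]]\subseteq Q$ and taking $J=\varphi^{-1}[Q]$, we conclude $Q\in\mathcal{F}_{\varphi[\![\mathcal{F}]\!]}$; in the second case the same argument applied to $P\setminus Q$ yields $P\setminus Q\in\mathcal{F}_{\varphi[\![\mathcal{F}]\!]}$. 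Either way the dichotomy holds, so $\mathcal{F}_{\varphi[\![\mathcal{F}]\!]}$ is an ultrafilter on $P$.

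As for the main difficulty: there is essentially none. The heart of the matter is the pairing of the trivial identity $\varphi[\varphi^{-1}[Q]]\subseteq Q$ with the ultrafilter property of $\mathcal{F}$; the only point deserving a line of care is the properness of the generated filter, which reduces to the remark that members of a filter are nonempty and images of nonempty sets are nonempty. One could equally well phrase the maximality step through the characterization ``$J\cup K\in\mathcal{G}$ implies $J\in\mathcal{G}$ or $K\in\mathcal{G}$'' quoted earlier, but the complement formulation makes the preimage argument most transparent.
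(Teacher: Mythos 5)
The paper states this proposition without giving a proof, so there is nothing to compare against; judged on its own, your argument is correct and complete. Both stages are sound: the filter-basis verification (nonemptiness of the members via nonemptiness of filter elements, and the refinement $\varphi[J\cap K]\subseteq\varphi[J]\cap\varphi[K]$) and the maximality step via $\varphi[\varphi^{-1}[Q]]\subseteq Q$ combined with the complement dichotomy for $\mathcal{F}$. One cosmetic slip: you write $P=\varphi[I]$, which requires $\varphi$ to be surjective and is not part of the hypotheses; you only need $\varnothing\neq\varphi[I]\subseteq P$, so nothing is harmed, but the equality should be an inclusion. A slightly slicker packaging of the same idea is to observe that $Q\in\mathcal{F}_{\varphi[\![\mathcal{F}]\!]}$ if, and only if, $\varphi^{-1}[Q]\in\mathcal{F}$ (since $\varphi[J]\subseteq Q$ is equivalent to $J\subseteq\varphi^{-1}[Q]$), which exhibits the generated filter as the pushforward ultrafilter and makes both properness and maximality immediate; but this is a reformulation, not a correction.
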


We warn the reader that in what follows the assumption at the beginning of the above section remains in force, i.e., we assume that all preordered sets are nonempty and upward directed.

To achieve the previously mentioned objective we start by defining a convenient category, $\mathbf{UdPros}_{\neq\varnothing,\mathrm{cof}}^{\mathrm{inj}}$, and then a suitable functor, $\mathrm{Uffs}$, from it to $\mathbf{Set}$ from which, by means of the Grothendieck construction, we will obtain the category, $\int_{\mathbf{UdPros}_{\neq\varnothing,\mathrm{cof}}^{\mathrm{inj}}}\mathrm{Uffs}$, which is at the basis of the aforesaid categorial rendering.

\begin{definition}
We denote by $\mathbf{UdPros}_{\neq\varnothing,\mathrm{cof}}^{\mathrm{inj}}$ the category whose objects are the preordered sets $\mathbf{I}$ and whose morphisms from $\mathbf{I}$ to $\mathbf{P}$ are the injective, isotone, and cofinal mappings $\varphi$ from $\mathbf{I}$ to $\mathbf{P}$ (recall that $\varphi$ is cofinal if for every $p\in P$ there exists an $i\in I$ such that $p\leq \varphi(i)$).
\end{definition}

\begin{proposition}
There exists a functor $\mathrm{Uffs}$ from $\mathbf{UdPros}_{\neq\varnothing,\mathrm{cof}}^{\mathrm{inj}}$ to $\mathbf{Set}$ which sends $\mathbf{I}$ to $\mathrm{Uffs}(\mathbf{I}) = \{\mathcal{F}\in \mathrm{Ufilt}(I)\mid \{\Uparrow\!i\mid i\in I\}\subseteq \mathcal{F}\}$, where $\mathrm{Ufilt}(I)$ is the set of all ultrafilters on $I$, and $\varphi\colon\mathbf{I}\mor\mathbf{P}$ to the mapping $\mathrm{Uffs}(\varphi)$ from $\mathrm{Uffs}(\mathbf{I})$ to $\mathrm{Uffs}(\mathbf{P})$ that assigns to each  $\mathcal{F}$ in $\mathrm{Uffs}(\mathbf{I})$ precisely $\mathcal{F}_{\varphi[\![\mathcal{F}]\!]}$ in $\mathrm{Uffs}(\mathbf{P})$.
\end{proposition}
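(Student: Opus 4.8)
The plan is to verify the two things a functor definition requires: first, that for each morphism $\varphi$ the assignment $\mathrm{Uffs}(\varphi)$ is a well-defined map $\mathrm{Uffs}(\mathbf{I})\mor\mathrm{Uffs}(\mathbf{P})$; and second, that $\mathrm{Uffs}$ preserves identities and composition. For the first point, given $\mathcal{F}\in\mathrm{Uffs}(\mathbf{I})$, Proposition~\ref{co-optimal lift Ulf} already tells us that $\mathcal{F}_{\varphi[\![\mathcal{F}]\!]}$ is an ultrafilter on $P$, so the only thing left to check is that it contains every final section $\Uparrow\!p$, $p\in P$ (equivalently, the filter of final sections of $\mathbf{P}$, these two conditions coinciding for a filter). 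This is exactly where the morphism hypotheses enter: given $p\in P$, cofinality of $\varphi$ yields an $i\in I$ with $p\leq\varphi(i)$, and then isotonicity gives $\varphi[\Uparrow\!i]\subseteq\Uparrow\!p$, because $i\leq j$ forces $p\leq\varphi(i)\leq\varphi(j)$. Since $\Uparrow\!i\in\mathcal{F}$ by the assumption $\mathcal{F}\in\mathrm{Uffs}(\mathbf{I})$, the set $\varphi[\Uparrow\!i]$ belongs to the filter basis $\varphi[\![\mathcal{F}]\!]$, hence to $\mathcal{F}_{\varphi[\![\mathcal{F}]\!]}$, and upward closure of that filter then forces $\Uparrow\!p\in\mathcal{F}_{\varphi[\![\mathcal{F}]\!]}$. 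So $\mathrm{Uffs}(\varphi)$ does land in $\mathrm{Uffs}(\mathbf{P})$.

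Next I would check functoriality by directly unwinding the formula for the co-optimal lift. Preservation of identities is immediate, since $\mathrm{Uffs}(\mathrm{id}_{\mathbf{I}})(\mathcal{F}) = \{Q\subseteq I\mid\exists\,J\in\mathcal{F}\,(J\subseteq Q)\}$, which equals $\mathcal{F}$ by upward closure. For composition, given morphisms $\varphi\colon\mathbf{I}\mor\mathbf{P}$ and $\psi\colon\mathbf{P}\mor\mathbf{Q}$ (observe that $\psi\circ\varphi$ is again injective, isotone and cofinal, hence a morphism of $\mathbf{UdPros}_{\neq\varnothing,\mathrm{cof}}^{\mathrm{inj}}$) and $\mathcal{F}\in\mathrm{Uffs}(\mathbf{I})$, one computes that $\mathrm{Uffs}(\psi\circ\varphi)(\mathcal{F})$ is the set of $R\subseteq Q$ with $\psi[\varphi[J]]\subseteq R$ for some $J\in\mathcal{F}$, whereas $(\mathrm{Uffs}(\psi)\circ\mathrm{Uffs}(\varphi))(\mathcal{F})$ is the set of $R\subseteq Q$ with $\psi[K]\subseteq R$ for some $K\in\mathcal{F}_{\varphi[\![\mathcal{F}]\!]}$. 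One inclusion is witnessed by the choice $K=\varphi[J]$; for the converse, any such $K$ contains some $\varphi[J]$ with $J\in\mathcal{F}$, so $\psi[\varphi[J]]\subseteq\psi[K]\subseteq R$. Hence the two ultrafilters coincide, and $\mathrm{Uffs}(\psi\circ\varphi) = \mathrm{Uffs}(\psi)\circ\mathrm{Uffs}(\varphi)$.

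I expect the only genuine content to be the first paragraph's verification that final sections push forward to final sections, which is precisely what the cofinality and isotonicity conditions on the morphisms of $\mathbf{UdPros}_{\neq\varnothing,\mathrm{cof}}^{\mathrm{inj}}$ are designed to deliver; everything else is a routine manipulation of the generated-filter formula. In particular, injectivity of $\varphi$ plays no role here — it will matter only later, in the construction of the natural transformations of Section~4.
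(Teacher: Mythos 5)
Your proof is correct and follows essentially the same route as the paper: well-definedness via Proposition~\ref{co-optimal lift Ulf} plus the isotone-and-cofinal argument pushing final sections forward, then a direct unwinding of the generated-filter formula for identities and composition. The only (immaterial) difference is that for composition you verify both inclusions directly, whereas the paper proves one inclusion and concludes equality from the maximality of the ultrafilter $\mathcal{F}_{(\psi\circ\varphi)[\![\mathcal{F}]\!]}$; your observation that injectivity is not used here is also consistent with the paper's argument.
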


\begin{proof}
We begin by proving that $\mathrm{Uffs}(\varphi)$ is well defined. This is so because, on the one hand, by Proposition~\ref{co-optimal lift Ulf}, $\mathcal{F}_{\varphi[\![\mathcal{F}]\!]}$ is an ultrafilter on $P$ and, on the other hand, since $\varphi$ is isotone and cofinal, the filter basis $\{\Uparrow\!p\mid p\in P\}$ is included in $\mathcal{F}_{\varphi[\![\mathcal{F}]\!]}$.

Since, evidently, $\mathrm{Uffs}$ preserves identities, let us show that if $\psi$ a morphism from $\mathbf{P}$ to $\mathbf{W}$, then $\mathrm{Uffs}(\psi\circ\varphi) = \mathrm{Uffs}(\psi)\circ\mathrm{Uffs}(\varphi)$, i.e., for every $\mathcal{F}\in\mathrm{Uffs}(\mathbf{I})$, we have that $\mathcal{F}_{(\psi\circ\varphi)[\![\mathcal{F}]\!]} = \mathcal{F}_{\psi[\![\mathcal{F}_{\varphi[\![\mathcal{F}]\!]}]\!]}$. Let $\mathcal{F}$ be an element of $\mathrm{Uffs}(\mathbf{I})$ and $X\subseteq W$ an element of $\mathcal{F}_{(\psi\circ\varphi)[\![\mathcal{F}]\!]}$. Then there exists a $J\in \mathcal{F}$ such that $\psi[\varphi[J]]\subseteq X$. Therefore, for $Q = \varphi[J]\in \mathcal{F}_{\varphi[\![\mathcal{F}]\!]}$, we have that $\psi[Q]\subseteq X$. Hence $X\in\mathcal{F}_{\psi[\![\mathcal{F}_{\varphi[\![\mathcal{F}]\!]}]\!]}$. Thus $\mathcal{F}_{(\psi\circ\varphi)[\![\mathcal{F}]\!]} \subseteq\mathcal{F}_{\psi[\![\mathcal{F}_{\varphi[\![\mathcal{F}]\!]}]\!]}$. But $\mathcal{F}_{(\psi\circ\varphi)[\![\mathcal{F}]\!]}$ is an ultrafilter on $W$, consequently, $\mathcal{F}_{(\psi\circ\varphi)[\![\mathcal{F}]\!]} = \mathcal{F}_{\psi[\![\mathcal{F}_{\varphi[\![\mathcal{F}]\!]}]\!]}$.
\end{proof}

\begin{definition}
We denote by $\mathbf{Uffs}$ the category $\int_{\mathbf{UdPros}_{\neq\varnothing,\mathrm{cof}}^{\mathrm{inj}}}\mathrm{Uffs}$ (obtained by means of the Grothendieck construction for the covariant functor $\mathrm{Uffs}$) whose objects are the ordered pairs $(\mathbf{I},\mathcal{F}_{\mathbf{I}})$ where $\mathbf{I}$ is an object of $\mathbf{UdPros}_{\neq\varnothing,\mathrm{cof}}^{\mathrm{inj}}$ and $\mathcal{F}_{\mathbf{I}}\in \mathrm{Uffs}(\mathbf{I})$, i.e., an ultrafilter on $I$ such that the filter of the final sections of $\mathbf{I}$ is contained in $\mathcal{F}_{\mathbf{I}}$, and whose morphisms from $(\mathbf{I},\mathcal{F}_{\mathbf{I}})$ to $(\mathbf{P},\mathcal{F}_{\mathbf{P}})$ are the injective, isotone, and cofinal mappings $\varphi$ from $\mathbf{I}$ to $\mathbf{P}$ such that $\mathcal{F}_{\varphi[\![\mathcal{F}_{\mathbf{I}}]\!]} = \mathcal{F}_{\mathbf{P}}$.
\end{definition}

\begin{proposition}\label{NatTrans}
Let $(\mathbf{I},\mathcal{F}_{\mathbf{I}})$ be an object of the category $\mathbf{Uffs}$. Then we have the functor $\varprojlim_{\mathbf{I}}\colon \mathbf{Alg}(\Sigma)^{\mathbf{I}^{\mathrm{op}}}\mor \mathbf{Alg}(\Sigma)$ which sends a projective system $\boldsymbol{\mathcal{A}} = ((\mathbf{A}^{i})_{i\in I},(f^{j,i})_{(i,j)\in \leq})$ relative to $\mathbf{I}$ to $\varprojlim_{\mathbf{I}}\boldsymbol{\mathcal{A}}$ and a morphism $u = (u^{i})_{i\in I}$ from $\boldsymbol{\mathcal{A}}$ to $\boldsymbol{\mathcal{B}} = ((\mathbf{B}^{i})_{i\in I},(g^{j,i})_{(i,j)\in \leq})$ to the homomorphism  $\varprojlim_{\mathbf{I}}u$ from $\varprojlim_{\mathbf{I}}\boldsymbol{\mathcal{A}}$ to $\varprojlim_{\mathbf{I}}\boldsymbol{\mathcal{B}}$. Moreover, we have the functor $D_{(\mathbf{I},\mathcal{F}_{\mathbf{I}})}\colon \mathbf{Alg}(\Sigma)^{\mathbf{I}^{\mathrm{op}}}\mor \mathbf{Alg}(\Sigma)^{\boldsymbol{\mathcal{F}_{\mathbf{I}}}}$ which sends a projective system $\boldsymbol{\mathcal{A}}$ relative to  $\mathbf{I}$ to the inductive system $\boldsymbol{\mathcal{A}}(\mathcal{F}_{\mathbf{I}})$ relative $\boldsymbol{\mathcal{F}_{\mathbf{I}}}$ and a morphism $u$ from $\boldsymbol{\mathcal{A}}$ to $\boldsymbol{\mathcal{B}}$ to the morphism $(u(J))_{J\in \mathcal{F}_{\mathbf{I}}}$ from $\boldsymbol{\mathcal{A}}(\mathcal{F}_{\mathbf{I}})$ to $\boldsymbol{\mathcal{B}}(\mathcal{F}_{\mathbf{I}})$. In addition, we have the functor $\varinjlim_{\boldsymbol{\mathcal{F}_{\mathbf{I}}}}\colon \mathbf{Alg}(\Sigma)^{\boldsymbol{\mathcal{F}_{\mathbf{I}}}}\mor \mathbf{Alg}(\Sigma)$. Therefore, we have the functors $\varprojlim_{\mathbf{I}}$ and $\varinjlim_{\boldsymbol{\mathcal{F}_{\mathbf{I}}}}\circ D_{(\mathbf{I},\mathcal{F}_{\mathbf{I}})}$ from $\mathbf{Alg}(\Sigma)^{\mathbf{I}^{\mathrm{op}}}$ to $\mathbf{Alg}(\Sigma)$. If we denote by $\mathbf{Alg}(\Sigma)^{\mathbf{I}^{\mathrm{op}}}_{\mathrm{f,cs}}$ the full subcategory of $\mathbf{Alg}(\Sigma)^{\mathbf{I}^{\mathrm{op}}}$ determined by the projective systems $\boldsymbol{\mathcal{A}}$ relative to $\mathbf{I}$ such that $(\mathbf{A}^{i})_{i\in I}$ is with constant support and, for every $i\in I$, $\mathbf{A}^{i}$ is finite, and, for simplicity of notation, we let $\varprojlim_{\mathbf{I}}$ and $\varinjlim_{\boldsymbol{\mathcal{F}_{\mathbf{I}}}}\circ D_{(\mathbf{I},\mathcal{F}_{\mathbf{I}})}$ stand for the restrictions to $\mathbf{Alg}(\Sigma)^{\mathbf{I}^{\mathrm{op}}}_{\mathrm{f,cs}}$ of the previous functors $\varprojlim_{\mathbf{I}}$ and $\varinjlim_{\boldsymbol{\mathcal{F}_{\mathbf{I}}}}\circ D_{(\mathbf{I},\mathcal{F}_{\mathbf{I}})}$, then it happens that $h^{(\mathbf{I},\mathcal{F}_{\mathbf{I}}),\boldsymbol{\cdot}} = (h^{(\mathbf{I},\mathcal{F}_{\mathbf{I}}),\boldsymbol{\mathcal{A}}})_{\boldsymbol{\mathcal{A}}\in \mathrm{Ob}(\mathbf{Alg}(\Sigma)^{\mathbf{I}^{\mathrm{op}}}_{\mathrm{f,cs}})}$ is a natural transformation from $\varinjlim_{\boldsymbol{\mathcal{F}_{\mathbf{I}}}}\circ D_{(\mathbf{I},\mathcal{F}_{\mathbf{I}})}$ to $\varprojlim_{\mathbf{I}}$, i.e., for every morphism  $u$ from $\boldsymbol{\mathcal{A}}$ to $\boldsymbol{\mathcal{B}}$, the following diagram commutes:
$$\xymatrix@C=40pt@R=40pt{
   \varinjlim_{\boldsymbol{\mathcal{F}_{\mathbf{I}}}}\boldsymbol{\mathcal{A}}(\mathcal{F}_{\mathbf{I}})
   \ar[d]_{\varinjlim_{\boldsymbol{\mathcal{F}_{\mathbf{I}}}} (u(J))_{J\in \mathcal{F}_{\mathbf{I}}}}
     \ar[r]^-{h^{(\mathbf{I},\mathcal{F}_{\mathbf{I}}),\boldsymbol{\mathcal{A}}}} & \varprojlim_{\mathbf{I}}\boldsymbol{\mathcal{A}}
     \ar[d]^{\varprojlim_{\mathbf{I}}u}  \\
   \varinjlim_{\boldsymbol{\mathcal{F}_{\mathbf{I}}}}\boldsymbol{\mathcal{B}}(\mathcal{F}_{\mathbf{I}})
   \ar[r]_-{h^{(\mathbf{I},\mathcal{F}_{\mathbf{I}}),\boldsymbol{\mathcal{B}}}} & \varprojlim_{\mathbf{I}}\boldsymbol{\mathcal{B}}
 }
$$
Moreover, we have that $(\mathrm{p}^{I}\circ \mathrm{in}^{\varprojlim_{\mathbf{I}}\boldsymbol{\mathcal{A}}})_{\boldsymbol{\mathcal{A}}\in \mathrm{Ob}(\mathbf{Alg}(\Sigma)^{\mathbf{I}^{\mathrm{op}}}_{\mathrm{f,cs}})}$ is a natural transformation from  $\varprojlim_{\mathbf{I}}$ to $\varinjlim_{\boldsymbol{\mathcal{F}_{\mathbf{I}}}}\circ D_{(\mathbf{I},\mathcal{F}_{\mathbf{I}})}$ and a right inverse for $h^{(\mathbf{I},\mathcal{F}_{\mathbf{I}}),\boldsymbol{\cdot}}$, i.e.,
$$
h^{(\mathbf{I},\mathcal{F}_{\mathbf{I}}),\boldsymbol{\cdot}}\circ (\mathrm{p}^{I}\circ \mathrm{in}^{\varprojlim_{\mathbf{I}}\boldsymbol{\mathcal{A}}})_{\boldsymbol{\mathcal{A}}\in \mathrm{Ob}(\mathbf{Alg}(\Sigma)^{\mathbf{I}^{\mathrm{op}}}_{\mathrm{f,cs}})} = \mathrm{id}_{\varprojlim_{\mathbf{I}}},
$$
where $\mathrm{id}_{\varprojlim_{\mathbf{I}}}$ is the identity natural transformation at the functor $\varprojlim_{\mathbf{I}}$.
\end{proposition}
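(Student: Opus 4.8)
The plan is to dispose of the functoriality assertions first, since they are routine, then of the right-inverse identity, and then of the two naturality statements; only the naturality of $h^{(\mathbf{I},\mathcal{F}_{\mathbf{I}}),\boldsymbol{\cdot}}$ needs more than unwinding definitions, and even that reduces, through the (extremal mono)-source and (extremal epi)-sink properties of limits and colimits, to a single componentwise filter computation patterned on the proof of Proposition~\ref{MS Mariano and Miraglia}.

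First I would verify that $\varprojlim_{\mathbf{I}}$, $D_{(\mathbf{I},\mathcal{F}_{\mathbf{I}})}$ and $\varinjlim_{\boldsymbol{\mathcal{F}_{\mathbf{I}}}}$ are functors. For $\varprojlim_{\mathbf{I}}$ and $\varinjlim_{\boldsymbol{\mathcal{F}_{\mathbf{I}}}}$ this is the standard functoriality of limits: a morphism of systems induces, through the pertinent universal property, a unique morphism of limit cones, and uniqueness forces preservation of identities and composites. For $D_{(\mathbf{I},\mathcal{F}_{\mathbf{I}})}$ I would note that $\boldsymbol{\mathcal{A}}(\mathcal{F}_{\mathbf{I}})$ depends only on the underlying family $(\mathbf{A}^{i})_{i\in I}$, that a morphism $u=(u^{i})_{i\in I}$ is sent to $(u(J))_{J\in\mathcal{F}_{\mathbf{I}}}$ with $u(J)=\prod_{j\in J}u^{j}\colon\mathbf{A}(J)\mor\mathbf{B}(J)$, and that for $J\subseteq K$ in $\mathcal{F}_{\mathbf{I}}$ the homomorphisms $\mathrm{p}^{K,J}_{\boldsymbol{\mathcal{B}}}\circ u(K)$ and $u(J)\circ\mathrm{p}^{K,J}_{\boldsymbol{\mathcal{A}}}$ are both ``restrict to $J$, then apply $u^{j}$ coordinatewise'', so that $D_{(\mathbf{I},\mathcal{F}_{\mathbf{I}})}(u)$ is a morphism of inductive systems; functoriality then follows from that of products. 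Finally, $\mathbf{Alg}(\Sigma)^{\mathbf{I}^{\mathrm{op}}}_{\mathrm{f,cs}}$ is exactly the full subcategory on which the hypotheses of Proposition~\ref{MS Mariano and Miraglia} hold (finiteness, constant support, and $\{\Uparrow\!i\mid i\in I\}\subseteq\mathcal{F}_{\mathbf{I}}$, the last because $\mathcal{F}_{\mathbf{I}}\in\mathrm{Uffs}(\mathbf{I})$), so $h^{(\mathbf{I},\mathcal{F}_{\mathbf{I}}),\boldsymbol{\mathcal{A}}}$ is defined for every object $\boldsymbol{\mathcal{A}}$ of that subcategory, and the right-inverse identity $h^{(\mathbf{I},\mathcal{F}_{\mathbf{I}}),\boldsymbol{\mathcal{A}}}\circ\mathrm{p}^{I}\circ\mathrm{in}^{\varprojlim_{\mathbf{I}}\boldsymbol{\mathcal{A}}}=\mathrm{id}_{\varprojlim_{\mathbf{I}}\boldsymbol{\mathcal{A}}}$ is precisely the conclusion of that proposition (recall $\mathrm{pr}^{\equiv^{\mathcal{F}}}=\mathrm{p}^{I}$, as in its proof). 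The naturality of the section $(\mathrm{p}^{I}\circ\mathrm{in}^{\varprojlim_{\mathbf{I}}\boldsymbol{\mathcal{A}}})_{\boldsymbol{\mathcal{A}}}$ is then immediate: for $u\colon\boldsymbol{\mathcal{A}}\mor\boldsymbol{\mathcal{B}}$, combine $\varinjlim_{\boldsymbol{\mathcal{F}_{\mathbf{I}}}}(u(J))_{J}\circ\mathrm{p}^{I}_{\boldsymbol{\mathcal{A}}}=\mathrm{p}^{I}_{\boldsymbol{\mathcal{B}}}\circ u(I)$ with $u(I)\circ\mathrm{in}^{\varprojlim_{\mathbf{I}}\boldsymbol{\mathcal{A}}}=\mathrm{in}^{\varprojlim_{\mathbf{I}}\boldsymbol{\mathcal{B}}}\circ\varprojlim_{\mathbf{I}}u$ (both sides detected by the projections $\mathrm{pr}^{I,i}$, since $u(I)=\prod_{i\in I}u^{i}$ and $\varprojlim_{\mathbf{I}}u$ is its corestriction) to see that both legs of the square equal $\mathrm{p}^{I}_{\boldsymbol{\mathcal{B}}}\circ\bigl(\prod_{i\in I}u^{i}\bigr)\circ\mathrm{in}^{\varprojlim_{\mathbf{I}}\boldsymbol{\mathcal{A}}}$.

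The heart of the matter is the naturality of $h^{(\mathbf{I},\mathcal{F}_{\mathbf{I}}),\boldsymbol{\cdot}}$. Fixing $u\colon\boldsymbol{\mathcal{A}}\mor\boldsymbol{\mathcal{B}}$, I would use that $\varprojlim_{\mathbf{I}}\boldsymbol{\mathcal{B}}$ is a projective limit, hence an (extremal mono)-source for its projections $g^{i}\colon\varprojlim_{\mathbf{I}}\boldsymbol{\mathcal{B}}\mor\mathbf{B}^{i}$, so it is enough to compose the naturality square with each $g^{i}$; by $g^{i}\circ\varprojlim_{\mathbf{I}}u=u^{i}\circ f^{i}$ and $f^{i}\circ h^{(\mathbf{I},\mathcal{F}_{\mathbf{I}}),\boldsymbol{\mathcal{A}}}=h^{i,\boldsymbol{\mathcal{A}}}$, $g^{i}\circ h^{(\mathbf{I},\mathcal{F}_{\mathbf{I}}),\boldsymbol{\mathcal{B}}}=h^{i,\boldsymbol{\mathcal{B}}}$ (the notation of the proof of Proposition~\ref{MS Mariano and Miraglia}, decorated with the system in question), this becomes $u^{i}\circ h^{i,\boldsymbol{\mathcal{A}}}=h^{i,\boldsymbol{\mathcal{B}}}\circ\varinjlim_{\boldsymbol{\mathcal{F}_{\mathbf{I}}}}(u(J))_{J}$ for each $i\in I$. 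Since $\varinjlim_{\boldsymbol{\mathcal{F}_{\mathbf{I}}}}\boldsymbol{\mathcal{A}}(\mathcal{F}_{\mathbf{I}})$ is an inductive limit, hence an (extremal epi)-sink for the canonical homomorphisms $\mathrm{p}^{J}_{\boldsymbol{\mathcal{A}}}$, $J\in\mathcal{F}_{\mathbf{I}}$, it suffices to precompose with each $\mathrm{p}^{J}_{\boldsymbol{\mathcal{A}}}$; using $h^{i,\boldsymbol{\mathcal{A}}}\circ\mathrm{p}^{J}_{\boldsymbol{\mathcal{A}}}=h^{J,i}_{\boldsymbol{\mathcal{A}}}$, $\varinjlim_{\boldsymbol{\mathcal{F}_{\mathbf{I}}}}(u(J))_{J}\circ\mathrm{p}^{J}_{\boldsymbol{\mathcal{A}}}=\mathrm{p}^{J}_{\boldsymbol{\mathcal{B}}}\circ u(J)$ and $h^{i,\boldsymbol{\mathcal{B}}}\circ\mathrm{p}^{J}_{\boldsymbol{\mathcal{B}}}=h^{J,i}_{\boldsymbol{\mathcal{B}}}$, the whole claim reduces to the single identity
$$
u^{i}\circ h^{J,i}_{\boldsymbol{\mathcal{A}}}=h^{J,i}_{\boldsymbol{\mathcal{B}}}\circ u(J)\qquad(i\in I,\ J\in\mathcal{F}_{\mathbf{I}}).
$$

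To finish I would check this identity componentwise, repeating the filter computation from the proof of Proposition~\ref{MS Mariano and Miraglia}. For $s\in S$ with $s\notin\mathrm{supp}_{S}(A^{i})$, constant support gives $A(J)_{s}=\prod_{j\in J}A^{j}_{s}=\varnothing$, so both sides are the unique mapping $\varnothing\mor B^{i}_{s}$. For $s\in\mathrm{supp}_{S}(A^{i})$ and $x\in A(J)_{s}$, set $y=h^{J,i}_{\boldsymbol{\mathcal{A}},s}(x)$, the unique element of $A^{i}_{s}$ with $V^{J,i,s}_{\boldsymbol{\mathcal{A}}}(x,y)=\{j\in J\cap\Uparrow\!i\mid f^{j,i}_{s}(x_{j})=y\}\in\mathcal{F}_{\mathbf{I}}$; since $u$ is a morphism of projective systems, $u^{i}\circ f^{j,i}=g^{j,i}\circ u^{j}$ for $i\leq j$, so each $j\in V^{J,i,s}_{\boldsymbol{\mathcal{A}}}(x,y)$ satisfies $g^{j,i}_{s}(u^{j}_{s}(x_{j}))=u^{i}_{s}(f^{j,i}_{s}(x_{j}))=u^{i}_{s}(y)$, i.e.\ lies in $V^{J,i,s}_{\boldsymbol{\mathcal{B}}}(u(J)_{s}(x),u^{i}_{s}(y))$; the latter set thus contains a member of the filter $\mathcal{F}_{\mathbf{I}}$, hence belongs to $\mathcal{F}_{\mathbf{I}}$, and by the uniqueness clause defining $h^{J,i}_{\boldsymbol{\mathcal{B}}}$ we conclude $h^{J,i}_{\boldsymbol{\mathcal{B}},s}(u(J)_{s}(x))=u^{i}_{s}(y)=u^{i}_{s}(h^{J,i}_{\boldsymbol{\mathcal{A}},s}(x))$. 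I expect the only real obstacle to be organizational: keeping the names of all the induced morphisms straight and chaining the source/sink reductions correctly so that the naturality of $h^{(\mathbf{I},\mathcal{F}_{\mathbf{I}}),\boldsymbol{\cdot}}$ genuinely collapses to the one displayed identity; that identity itself is short, being essentially a copy of a computation already carried out in the proof of Proposition~\ref{MS Mariano and Miraglia}.
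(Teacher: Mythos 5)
Your proof is correct and follows essentially the same route as the paper's: the naturality of $h^{(\mathbf{I},\mathcal{F}_{\mathbf{I}}),\boldsymbol{\cdot}}$ is reduced, via the (extremal mono)-source property of $\varprojlim_{\mathbf{I}}\boldsymbol{\mathcal{B}}$ and the (extremal epi)-sink property of $\varinjlim_{\boldsymbol{\mathcal{F}_{\mathbf{I}}}}\boldsymbol{\mathcal{A}}(\mathcal{F}_{\mathbf{I}})$, to the componentwise identity $u^{i}\circ h^{J,i}=h^{J,i}\circ u(J)$, which is then established by exactly the same filter computation (showing $V^{J,i,s}(x,y)\subseteq V^{J,i,s}(u(J)_{s}(x),u^{i}_{s}(y))$ and invoking upward closure of $\mathcal{F}_{\mathbf{I}}$). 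The only difference is that you also spell out the functoriality claims, the empty-support case, and the naturality and right-inverse property of the section, which the paper leaves to the reader.
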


\begin{proof}
We restrict ourselves to show that $h^{(\mathbf{I},\mathcal{F}_{\mathbf{I}}),\boldsymbol{\cdot}}$ is a natural transformation from $\varinjlim_{\boldsymbol{\mathcal{F}_{\mathbf{I}}}}\circ D_{(\mathbf{I},\mathcal{F}_{\mathbf{I}})}$ to $\varprojlim_{\mathbf{I}}$. Let $u = (u^{i})_{i\in I}$ be a morphism from $\boldsymbol{\mathcal{A}}$ to $\boldsymbol{\mathcal{B}}$. We claim that  $\varprojlim_{\mathbf{I}} u\circ h^{(\mathbf{I},\mathcal{F}_{\mathbf{I}}),\boldsymbol{\mathcal{A}}} = h^{(\mathbf{I},\mathcal{F}_{\mathbf{I}}),\boldsymbol{\mathcal{B}}}\circ \varinjlim_{\boldsymbol{\mathcal{F}_{\mathbf{I}}}} (u(J))_{J\in \mathcal{F}_{\mathbf{I}}}$. Indeed, this follows from the following facts: (1) $\varinjlim_{\boldsymbol{\mathcal{F}_{\mathbf{I}}}}\boldsymbol{\mathcal{A}}(\mathcal{F}_{\mathbf{I}})$ is an (extremal epi)-sink, (2) $\varprojlim_{\mathbf{I}}\boldsymbol{\mathcal{B}}$ is an (extremal mono)-source, and (3), for every $J\in \mathcal{F}_{\mathbf{I}}$ and every $i\in I$, the homomorphisms $u^{i}\circ h^{J,i}$ and $h^{J,i}\circ u(J)$ from $\mathbf{A}(J)$ to $\mathbf{B}^{i}$ are equal, where, by abuse of notation, we have used the same symbol $h^{J,i}$ for the homomorphisms from $\mathbf{A}(J)$ to $\mathbf{A}^{i}$ and from $\mathbf{B}(J)$ to $\mathbf{B}^{i}$. With regard to the last fact, we recall that, for $s\in \mathrm{supp}_{S}(A^{i})$,  $x\in A(J)_{s}$, and $y\in A^{i}_{s}$, $V^{J,i,s}(x,y) = \{j\in J\cap\Uparrow\!i\mid f^{j,i}_{s}(x_{j}) = y\}$ and that $h^{J,i}_{s}(x) = y$ if, and only if, $V^{J,i,s}(x,y)\in \mathcal{F}_{\mathbf{I}}$. Thus, for $j\in V^{J,i,s}(x,y)$, since, by hypothesis, $u$ is a morphism from $\boldsymbol{\mathcal{A}}$ to $\boldsymbol{\mathcal{B}}$, we have that $g^{j,i}(u^{j}_{s}(x_{j})) = u^{i}_{s}(f^{j,i}_{s}(x_{j})) = u^{i}_{s}(y)$, and so $j\in V^{J,i,s}((u^{j}_{s}(x_{j}))_{j\in J},u^{i}_{s}(y))$. Hence $V^{J,i,s}((u^{j}_{s}(x_{j}))_{j\in J},u^{i}_{s}(y))\in \mathcal{F}_{\mathbf{I}}$, i.e., $h^{J,i}((u^{j}_{s}(x_{j}))_{j\in J}) = u^{i}_{s}(y)$. Therefore $h^{J,i}\circ u(J) = u^{i}\circ h^{J,i}$.
\end{proof}

\begin{conventions}
In what follows, for simplicity of notation, given a functor $F$ from $\mathbf{A}$ to $\mathbf{B}$ and a natural transformation $\eta$ from $G$ to $H$, where $G$ and $H$ are functors from $\mathbf{B}$ to $\mathbf{C}$, $\eta\ast F$ stands for $\eta\ast \mathrm{id}_{F}$, the horizontal composition of $\mathrm{id}_{F}$ and $\eta$, where $\mathrm{id}_{F}$ is the identity natural transformation at $F$, and we write $F\circ F$ for $ \mathrm{id}_{F}\circ  \mathrm{id}_{F}$, the vertical composition of $\mathrm{id}_{F}$ with itself. Moreover, if $\mathbf{X}$ and $\mathbf{Y}$ are subcategories of $\mathbf{A}$ and $\mathbf{B}$, respectively, and there exists the bi-restriction of $F$ to $\mathbf{X}$ and $\mathbf{Y}$, then we denote it briefly by $F$.
\end{conventions}

\begin{proposition}\label{NatTrans p and q}
Let $\varphi\colon (\mathbf{I},\mathcal{F}_{\mathbf{I}}) \mor(\mathbf{P},\mathcal{F}_{\mathbf{P}})$ be a morphism in $\mathbf{Uffs}$. Then $\varphi$ determines a functor $\mathrm{Alg}(\Sigma)^{\varphi}\colon \mathbf{Alg}(\Sigma)^{\mathbf{P}^{\mathrm{op}}}\mor \mathbf{Alg}(\Sigma)^{\mathbf{I}^{\mathrm{op}}}$ which assigns to a projective system $\boldsymbol{\mathcal{A}} = ((\mathbf{A}^{p})_{p\in P},(f^{q,p})_{(p,q)\in \leq})$ in $\mathbf{Alg}(\Sigma)^{\mathbf{P}^{\mathrm{op}}}$ the projective system $\boldsymbol{\mathcal{A}}^{\varphi} = ((\mathbf{A}^{\varphi(i)})_{i\in I},(f^{\varphi(j),\varphi(i)})_{(i,j)\in \leq})$ in $\mathbf{Alg}(\Sigma)^{\mathbf{I}^{\mathrm{op}}}$ and to a morphism $u$ from $\boldsymbol{\mathcal{A}}$ to  $\boldsymbol{\mathcal{B}}$ in $\mathbf{Alg}(\Sigma)^{\mathbf{P}^{\mathrm{op}}}$ the morphism $u^{\varphi} = (u^{\varphi(i)})_{i\in I}$ from $\boldsymbol{\mathcal{A}}^{\varphi}$ to $\boldsymbol{\mathcal{B}}^{\varphi}$ in $\mathbf{Alg}(\Sigma)^{\mathbf{I}^{\mathrm{op}}}$. Therefore, for the categories   $\mathbf{Alg}(\Sigma)^{\mathbf{P}^{\mathrm{op}}}_{\mathrm{f,cs}}$ and $\mathbf{Alg}(\Sigma)^{\mathbf{I}^{\mathrm{op}}}_{\mathrm{f,cs}}$, since there exists the bi-restriction of the functor $\mathrm{Alg}(\Sigma)^{\varphi}$ to them and, by Proposition~\ref{NatTrans}, $h^{(\mathbf{I},\mathcal{F}_{\mathbf{I}}),\boldsymbol{\cdot}}$ is a natural transformation from $\varinjlim_{\boldsymbol{\mathcal{F}_{\mathbf{I}}}}\circ D_{(\mathbf{I},\mathcal{F}_{\mathbf{I}})}$ to $\varprojlim_{\mathbf{I}}$, we have a natural transformation $h^{(\mathbf{I},\mathcal{F}_{\mathbf{I}}),\boldsymbol{\cdot}}\ast \mathrm{Alg}^{\varphi} (= h^{(\mathbf{I},\mathcal{F}_{\mathbf{I}}),\boldsymbol{\cdot}}\ast \mathrm{id}_{\mathrm{Alg}^{\varphi}})$ from $\varinjlim_{\boldsymbol{\mathcal{F}_{\mathbf{I}}}}\circ D_{(\mathbf{I},\mathcal{F}_{\mathbf{I}})}\circ \mathrm{Alg}(\Sigma)^{\varphi}$ to $\varprojlim_{\mathbf{I}}\circ \mathrm{Alg}(\Sigma)^{\varphi}$. Moreover, there exists a natural transformation $\mathfrak{p}^{\varphi}$ from $\varprojlim_{\mathbf{P}}$ to $\varprojlim_{\mathbf{I}}\circ \mathrm{Alg}(\Sigma)^{\varphi}$. On the other hand, also by Proposition~\ref{NatTrans}, for $\mathbf{Alg}(\Sigma)^{\mathbf{P}^{\mathrm{op}}}_{\mathrm{f,cs}}$, we have a natural transformation $h^{(\mathbf{P},\mathcal{F}_{\mathbf{P}}),\boldsymbol{\cdot}}$ from $\varinjlim_{\boldsymbol{\mathcal{F}_{\mathbf{P}}}}\circ D_{(\mathbf{P},\mathcal{F}_{\mathbf{P}})}$ to $\varprojlim_{\mathbf{P}}$. Besides, there exists a natural transformation $\mathfrak{q}^{\varphi}$ from $\varinjlim_{\boldsymbol{\mathcal{F}_{\mathbf{I}}}}\circ D_{(\mathbf{I},\mathcal{F}_{\mathbf{I}})}\circ \mathrm{Alg}(\Sigma)^{\varphi}$ to $\varinjlim_{\boldsymbol{\mathcal{F}_{\mathbf{P}}}}\circ D_{(\mathbf{P},\mathcal{F}_{\mathbf{P}})}$.
\end{proposition}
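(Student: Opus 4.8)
The plan is to produce, one by one, the six ingredients asserted in the statement, in each case reducing naturality to the slogan used repeatedly in the previous proof: a projective limit is an (extremal mono)-source and an inductive limit is an (extremal epi)-sink, so an equality of homomorphisms out of (resp.\ into) such an object may be checked after composing with the canonical cone.

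First I would define $\mathrm{Alg}(\Sigma)^{\varphi}$. Since $\varphi$ is isotone it induces a functor $\mathbf{I}^{\mathrm{op}}\longrightarrow\mathbf{P}^{\mathrm{op}}$, and $\mathrm{Alg}(\Sigma)^{\varphi}$ is precomposition with it: $\boldsymbol{\mathcal{A}} = ((\mathbf{A}^{p})_{p\in P},(f^{q,p}))\mapsto \boldsymbol{\mathcal{A}}^{\varphi} = ((\mathbf{A}^{\varphi(i)})_{i\in I},(f^{\varphi(j),\varphi(i)}))$ and $u\mapsto u^{\varphi} = (u^{\varphi(i)})_{i\in I}$; functoriality is immediate. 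For the bi-restriction to the $\mathrm{f,cs}$ subcategories, note that each $\mathbf{A}^{\varphi(i)}$ is finite, being one of the finite algebras $\mathbf{A}^{p}$, and that the subfamily $(\mathbf{A}^{\varphi(i)})_{i\in I}$ of the constant-support family $(\mathbf{A}^{p})_{p\in P}$ is again with constant support (here it matters that $I\neq\varnothing$), so $\boldsymbol{\mathcal{A}}^{\varphi}\in\mathbf{Alg}(\Sigma)^{\mathbf{I}^{\mathrm{op}}}_{\mathrm{f,cs}}$. The natural transformation $h^{(\mathbf{I},\mathcal{F}_{\mathbf{I}}),\boldsymbol{\cdot}}\ast\mathrm{Alg}^{\varphi}$ is then the whiskering of the natural transformation of Proposition~\ref{NatTrans} with this (bi-restricted) functor, its component at $\boldsymbol{\mathcal{A}}$ being $h^{(\mathbf{I},\mathcal{F}_{\mathbf{I}}),\boldsymbol{\mathcal{A}}^{\varphi}}$; and $h^{(\mathbf{P},\mathcal{F}_{\mathbf{P}}),\boldsymbol{\cdot}}$ is just Proposition~\ref{NatTrans} applied to the object $(\mathbf{P},\mathcal{F}_{\mathbf{P}})$ of $\mathbf{Uffs}$.

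For $\mathfrak{p}^{\varphi}$, given $\boldsymbol{\mathcal{A}}$ relative to $\mathbf{P}$, the canonical projective cone $(\varprojlim_{\mathbf{P}}\boldsymbol{\mathcal{A}},(f^{p})_{p\in P})$ restricts along $\varphi$ to a projective cone $(\varprojlim_{\mathbf{P}}\boldsymbol{\mathcal{A}},(f^{\varphi(i)})_{i\in I})$ to $\boldsymbol{\mathcal{A}}^{\varphi}$ (the cone equations $f^{\varphi(i)} = f^{\varphi(j),\varphi(i)}\circ f^{\varphi(j)}$ are instances of the original ones), so the universal property of $\varprojlim_{\mathbf{I}}\boldsymbol{\mathcal{A}}^{\varphi}$ yields a unique homomorphism $\mathfrak{p}^{\varphi}_{\boldsymbol{\mathcal{A}}}\colon\varprojlim_{\mathbf{P}}\boldsymbol{\mathcal{A}}\longrightarrow\varprojlim_{\mathbf{I}}\boldsymbol{\mathcal{A}}^{\varphi}$ with $g^{i}\circ\mathfrak{p}^{\varphi}_{\boldsymbol{\mathcal{A}}} = f^{\varphi(i)}$ for every $i\in I$. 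Naturality in $\boldsymbol{\mathcal{A}}$ follows because $\varprojlim_{\mathbf{I}}\boldsymbol{\mathcal{B}}^{\varphi}$ is an (extremal mono)-source: both composites in the relevant square, followed by the canonical projections, reduce to the original cone data for $u$.

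The substantial piece is $\mathfrak{q}^{\varphi}$. Here I would use the universal property of $\varinjlim_{\boldsymbol{\mathcal{F}_{\mathbf{I}}}}\boldsymbol{\mathcal{A}}^{\varphi}(\mathcal{F}_{\mathbf{I}})$. For $J\in\mathcal{F}_{\mathbf{I}}$, injectivity of $\varphi$ makes $\varphi\!\!\upharpoonright\!\!J$ a bijection $J\longrightarrow\varphi[J]$, which (reindexing a product along a bijection, the structural operations being componentwise) induces an isomorphism $\theta_{J}\colon\prod_{j\in J}\mathbf{A}^{\varphi(j)}\longrightarrow\prod_{q\in\varphi[J]}\mathbf{A}^{q} = \mathbf{A}(\varphi[J])$; and $\varphi[J]\in\mathcal{F}_{\mathbf{P}}$ precisely because $\mathcal{F}_{\mathbf{P}} = \mathcal{F}_{\varphi[\![\mathcal{F}_{\mathbf{I}}]\!]}$, so we may post-compose with the canonical homomorphism $\mathrm{p}^{\varphi[J]}$ of $\varinjlim_{\boldsymbol{\mathcal{F}_{\mathbf{P}}}}\boldsymbol{\mathcal{A}}(\mathcal{F}_{\mathbf{P}})$ to get $\mathrm{p}^{\varphi[J]}\circ\theta_{J}$. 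For $K\supseteq J$ the square relating $\theta_{K}$, $\theta_{J}$, the transition $\mathrm{p}^{K,J}$ of $\boldsymbol{\mathcal{A}}^{\varphi}(\mathcal{F}_{\mathbf{I}})$ and the transition $\mathrm{p}^{\varphi[K],\varphi[J]}$ of $\boldsymbol{\mathcal{A}}(\mathcal{F}_{\mathbf{P}})$ commutes (both routes restrict the tuple and reindex by $\varphi$), and $\mathrm{p}^{\varphi[J]}\circ\mathrm{p}^{\varphi[K],\varphi[J]} = \mathrm{p}^{\varphi[K]}$ by the inductive cone condition; hence $(\mathrm{p}^{\varphi[J]}\circ\theta_{J})_{J\in\mathcal{F}_{\mathbf{I}}}$ is an inductive cone from $\boldsymbol{\mathcal{A}}^{\varphi}(\mathcal{F}_{\mathbf{I}})$ to $\varinjlim_{\boldsymbol{\mathcal{F}_{\mathbf{P}}}}\boldsymbol{\mathcal{A}}(\mathcal{F}_{\mathbf{P}})$, and the universal property produces $\mathfrak{q}^{\varphi}_{\boldsymbol{\mathcal{A}}}$. (Conceptually, $\mathfrak{q}^{\varphi}_{\boldsymbol{\mathcal{A}}}$ is the canonical comparison induced by the monotone map $J\mapsto\varphi[J]$ from $\boldsymbol{\mathcal{F}_{\mathbf{I}}}$ to $\boldsymbol{\mathcal{F}_{\mathbf{P}}}$ together with the natural isomorphism $\boldsymbol{\mathcal{A}}^{\varphi}(\mathcal{F}_{\mathbf{I}})\cong\boldsymbol{\mathcal{A}}(\mathcal{F}_{\mathbf{P}})\circ(J\mapsto\varphi[J])$ given by the $\theta_{J}$.) Naturality follows again from the (extremal epi)-sink property: pre-compose with each $\mathrm{p}^{J}$ and use that a morphism $u$ of projective systems intertwines the isomorphisms $\theta_{J}$. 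I expect this last construction to be the main obstacle — not through any single hard idea, but through the bookkeeping needed to keep the two inductive systems (one over $\boldsymbol{\mathcal{F}_{\mathbf{I}}}$, one over $\boldsymbol{\mathcal{F}_{\mathbf{P}}}$) and the reindexing isomorphisms synchronised through all transition homomorphisms and through the morphisms $u$.
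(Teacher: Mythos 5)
Your proof is correct and takes essentially the same route as the paper's: $\mathfrak{p}^{\varphi}$ is obtained from the universal property of $\varprojlim_{\mathbf{I}}\boldsymbol{\mathcal{A}}^{\varphi}$ applied to the restricted cone $(f^{\varphi(i)})_{i\in I}$, and $\mathfrak{q}^{\varphi}$ from the universal property of the inductive limit together with the reindexing isomorphisms $\prod_{j\in J}\mathbf{A}^{\varphi(j)}\cong\prod_{q\in\varphi[J]}\mathbf{A}^{q}$ and the fact that $\varphi[J]\in\mathcal{F}_{\mathbf{P}}$, exactly the ingredients the paper relies on (though it only makes the isomorphism and the characterization $\mathfrak{q}^{\varphi}_{\boldsymbol{\mathcal{A}}}\circ\mathrm{p}^{J}=\mathrm{p}^{\varphi[J]}$ explicit in the proof of the subsequent proposition). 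Your write-up is in fact more detailed than the paper's, which treats only $\mathfrak{p}^{\varphi}$ and dispatches $\mathfrak{q}^{\varphi}$ as ``a similar argument, but for inductive limits.''
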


\begin{proof}
Let $\boldsymbol{\mathcal{A}}$ be a projective system in $\mathbf{Alg}(\Sigma)^{\mathbf{P}^{\mathrm{op}}}_{\mathrm{f,cs}}$. Then, by the universal property of $\varprojlim_{\mathbf{I}}\boldsymbol{\mathcal{A}}^{\varphi}$, since, for every $(i,j)\in \leq$,   $f^{\varphi(i)} = f^{\varphi(j),\varphi(i)}\circ f^{\varphi(j)}$, there exists a unique homomorphism $\mathfrak{p}^{\varphi}_{\boldsymbol{\mathcal{A}}}$ from $\varprojlim_{\mathbf{P}}\boldsymbol{\mathcal{A}}$ to $\varprojlim_{\mathbf{I}}\boldsymbol{\mathcal{A}}^{\varphi}$ such that, for every $i\in I$, $f^{\varphi,i}\circ \mathfrak{p}^{\varphi}_{\boldsymbol{\mathcal{A}}} = f^{\varphi(i)}$, where $f^{\varphi,i}$ is the canonical homomorphism from $\varprojlim_{\mathbf{I}}\boldsymbol{\mathcal{A}}^{\varphi}$ to $\mathbf{A}^{\varphi(i)}$, and then $\mathfrak{p}^{\varphi} = (\mathfrak{p}^{\varphi}_{\boldsymbol{\mathcal{A}}})_{\boldsymbol{\mathcal{A}}\in \mathrm{Ob}(\mathbf{Alg}(\Sigma)^{\mathbf{P}^{\mathrm{op}}}_{\mathrm{f,cs}})}$ is, obviously, a natural transformation from $\varprojlim_{\mathbf{P}}$ to $\varprojlim_{\mathbf{I}}\circ \mathrm{Alg}(\Sigma)^{\varphi}$.

By a similar argument, but for inductive limits, it follows the existence of $\mathfrak{q}^{\varphi}$.
\end{proof}

\begin{proposition}\label{Cylinder equation}
Let $\varphi\colon (\mathbf{I},\mathcal{F}_{\mathbf{I}}) \mor(\mathbf{P},\mathcal{F}_{\mathbf{P}})$ be a morphism in  $\mathbf{Uffs}$. Then, by restricting to $\mathbf{Alg}(\Sigma)^{\mathbf{P}^{\mathrm{op}}}_{\mathrm{f,cs}}$ and $\mathbf{Alg}(\Sigma)^{\mathbf{I}^{\mathrm{op}}}_{\mathrm{f,cs}}$, we have that
$$
h^{(\mathbf{I},\mathcal{F}_{\mathbf{I}}),\boldsymbol{\cdot}}\ast \mathrm{Alg}^{\varphi} = \mathfrak{p}^{\varphi}\circ h^{(\mathbf{P},\mathcal{F}_{\mathbf{P}}),\boldsymbol{\cdot}}\circ \mathfrak{q}^{\varphi},
$$
i.e., in the following diagram the involved natural transformations satisfy the just stated equation:
$$
\xymatrix@C=86pt@R=38pt{
\mathbf{Alg}(\Sigma)
  \ar[rd]^{\mathrm{Id}_{\mathbf{Alg}(\Sigma)}}="T"
  \ar@/^60pt/[dd];[]|(0.50)*+[l]{\varinjlim_{\boldsymbol{\mathcal{F}_{\mathbf{I}}}}\circ D_{(\mathbf{I},\mathcal{F}_{\mathbf{I}})}\circ \mathrm{Alg}(\Sigma)^{\varphi}}="Kl"
  \ar@/_40pt/[dd];[]|*+[r]{\varprojlim_{\mathbf{I}}\circ\mathrm{Alg}(\Sigma)^{\varphi}}="Kl'"
  \\
&
\mathbf{Alg}(\Sigma)
  \ar@/^44pt/[dd];[]|(0.50)*+[l]{\varinjlim_{\boldsymbol{\mathcal{F}_{\mathbf{P}}}}\circ D_{(\mathbf{P},\mathcal{F}_{\mathbf{P}})}}="Kr"
  \ar@/_40pt/[dd];[]|*+[r]{\varprojlim_{\mathbf{P}}}="Kr'"
  \\
\mathbf{Alg}(\Sigma)^{\mathbf{P}^{\mathrm{op}}}_{\mathrm{f,cs}}
  \ar[rd]_{\mathrm{Id}_{\mathbf{Alg}(\Sigma)^{\mathbf{P}^{\mathrm{op}}}_{\mathrm{f,cs}}}}="T'"
  \\
&
\mathbf{Alg}(\Sigma)^{\mathbf{P}^{\mathrm{op}}}_{\mathrm{f,cs}}
\ar @{} "Kl";"Kl'"|{\dir{==>}}^*+{\hspace{0.6cm}h^{(\mathbf{I},\mathcal{F}_{\mathbf{I}}),\boldsymbol{\cdot}}\ast\mathrm{Alg}(\Sigma)^{\varphi}}
\ar @{} "Kr";"Kr'"|{\dir{==>}}^*+{h^{(\mathbf{P},\mathcal{F}_{\mathbf{P}}),\boldsymbol{\cdot}}}
\ar @{} "Kl";"Kr"|*:a(-0)@_{==>}^*+{\mathfrak{q}^{\varphi}}
\ar @{} "Kl'";"Kr'"|*:a(-180)@_{==>}^*+{\hspace{0.2cm}{\mathfrak{p}^{\varphi}}}
}
$$
\end{proposition}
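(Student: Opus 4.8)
The plan is to establish the stated identity of natural transformations componentwise, and to analyse each component with the same technique used in Proposition~\ref{MS Mariano and Miraglia}: a projective limit is an (extremal mono)-source, an inductive limit is an (extremal epi)-sink, so everything reduces to comparing two ultrafilter descriptions of one mediating homomorphism. First I would fix a projective system $\boldsymbol{\mathcal{A}} = ((\mathbf{A}^{p})_{p\in P},(f^{q,p})_{(p,q)\in \leq})$ in $\mathbf{Alg}(\Sigma)^{\mathbf{P}^{\mathrm{op}}}_{\mathrm{f,cs}}$ and note that $\boldsymbol{\mathcal{A}}^{\varphi}$ lies in $\mathbf{Alg}(\Sigma)^{\mathbf{I}^{\mathrm{op}}}_{\mathrm{f,cs}}$, its underlying family being a reindexing along $\varphi$ of a family of finite $\Sigma$-algebras with constant support. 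Hence the component at $\boldsymbol{\mathcal{A}}$ of the left-hand side is $h^{(\mathbf{I},\mathcal{F}_{\mathbf{I}}),\boldsymbol{\mathcal{A}}^{\varphi}}$ and that of the right-hand side is $\mathfrak{p}^{\varphi}_{\boldsymbol{\mathcal{A}}}\circ h^{(\mathbf{P},\mathcal{F}_{\mathbf{P}}),\boldsymbol{\mathcal{A}}}\circ \mathfrak{q}^{\varphi}_{\boldsymbol{\mathcal{A}}}$, both being homomorphisms from $\varinjlim_{\boldsymbol{\mathcal{F}_{\mathbf{I}}}}\boldsymbol{\mathcal{A}}^{\varphi}(\mathcal{F}_{\mathbf{I}})$ to $\varprojlim_{\mathbf{I}}\boldsymbol{\mathcal{A}}^{\varphi}$. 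Using the canonical projections $f^{\varphi,i}$ ($i\in I$) of $\varprojlim_{\mathbf{I}}\boldsymbol{\mathcal{A}}^{\varphi}$ and the canonical homomorphisms $\mathrm{p}^{J}$ ($J\in\mathcal{F}_{\mathbf{I}}$) of $\varinjlim_{\boldsymbol{\mathcal{F}_{\mathbf{I}}}}\boldsymbol{\mathcal{A}}^{\varphi}(\mathcal{F}_{\mathbf{I}})$, it then suffices to check, for all $i\in I$ and $J\in\mathcal{F}_{\mathbf{I}}$,
$$
f^{\varphi,i}\circ h^{(\mathbf{I},\mathcal{F}_{\mathbf{I}}),\boldsymbol{\mathcal{A}}^{\varphi}}\circ \mathrm{p}^{J} = f^{\varphi,i}\circ \mathfrak{p}^{\varphi}_{\boldsymbol{\mathcal{A}}}\circ h^{(\mathbf{P},\mathcal{F}_{\mathbf{P}}),\boldsymbol{\mathcal{A}}}\circ \mathfrak{q}^{\varphi}_{\boldsymbol{\mathcal{A}}}\circ \mathrm{p}^{J}.
$$

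Next I would make both sides explicit. On the left, the construction in the proof of Proposition~\ref{MS Mariano and Miraglia} applied to $\boldsymbol{\mathcal{A}}^{\varphi}$ over $\mathbf{I}$ gives $f^{\varphi,i}\circ h^{(\mathbf{I},\mathcal{F}_{\mathbf{I}}),\boldsymbol{\mathcal{A}}^{\varphi}} = h^{i}$ and $h^{i}\circ \mathrm{p}^{J} = h^{J,i}$, where $h^{J,i}\colon \prod_{j\in J}\mathbf{A}^{\varphi(j)}\mor \mathbf{A}^{\varphi(i)}$ sends, at sort $s$, a tuple $x$ to the unique $y\in A^{\varphi(i)}_{s}$ with $\{j\in J\cap\Uparrow\!i\mid f^{\varphi(j),\varphi(i)}_{s}(x_{j}) = y\}\in \mathcal{F}_{\mathbf{I}}$ (the transition homomorphisms of $\boldsymbol{\mathcal{A}}^{\varphi}$ being the $f^{\varphi(j),\varphi(i)}$). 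On the right, I would use that $f^{\varphi,i}\circ \mathfrak{p}^{\varphi}_{\boldsymbol{\mathcal{A}}} = f^{\varphi(i)}$, the canonical projection of $\varprojlim_{\mathbf{P}}\boldsymbol{\mathcal{A}}$ (the defining property of $\mathfrak{p}^{\varphi}$ from Proposition~\ref{NatTrans p and q}); that $f^{\varphi(i)}\circ h^{(\mathbf{P},\mathcal{F}_{\mathbf{P}}),\boldsymbol{\mathcal{A}}}\circ \mathrm{p}^{Q} = h^{Q,\varphi(i)}$ for $Q\in\mathcal{F}_{\mathbf{P}}$, again by the proof of Proposition~\ref{MS Mariano and Miraglia}; and the identification $\mathfrak{q}^{\varphi}_{\boldsymbol{\mathcal{A}}}\circ \mathrm{p}^{J} = \mathrm{p}^{\varphi[J]}\circ \iota_{J}$, where $\varphi[J]\in\mathcal{F}_{\mathbf{P}}$ since $\mathcal{F}_{\mathbf{P}} = \mathcal{F}_{\varphi[\![\mathcal{F}_{\mathbf{I}}]\!]}$ and $\iota_{J}\colon \prod_{j\in J}\mathbf{A}^{\varphi(j)}\mor \prod_{q\in\varphi[J]}\mathbf{A}^{q}$ is the reindexing isomorphism $x\mapsto (x_{\varphi^{-1}(q)})_{q\in\varphi[J]}$ afforded by injectivity of $\varphi$. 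Before using the last relation I would verify that the cocone $(\mathrm{p}^{\varphi[J]}\circ \iota_{J})_{J\in\mathcal{F}_{\mathbf{I}}}$ is compatible with the transition homomorphisms of $\boldsymbol{\mathcal{A}}^{\varphi}(\mathcal{F}_{\mathbf{I}})$ — immediate, since reindexing commutes with restriction along subsets — so that by the universal property of $\varinjlim_{\boldsymbol{\mathcal{F}_{\mathbf{I}}}}\boldsymbol{\mathcal{A}}^{\varphi}(\mathcal{F}_{\mathbf{I}})$ the induced map is exactly $\mathfrak{q}^{\varphi}_{\boldsymbol{\mathcal{A}}}$. Assembling these, the right side equals $h^{\varphi[J],\varphi(i)}\circ \iota_{J}$, sending, at sort $s$, a tuple $x$ to the unique $y\in A^{\varphi(i)}_{s}$ with $\{q\in \varphi[J]\cap\Uparrow\!\varphi(i)\mid f^{q,\varphi(i)}_{s}(x_{\varphi^{-1}(q)}) = y\}\in \mathcal{F}_{\mathbf{P}}$.

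The comparison of the two values at a sort $s$ is then short (there is nothing to prove unless $s\in\mathrm{supp}_{S}(A^{\varphi(i)})$, the relevant components being empty otherwise by constant support): if $y_{0}$ is the left value and $V = \{j\in J\cap\Uparrow\!i\mid f^{\varphi(j),\varphi(i)}_{s}(x_{j}) = y_{0}\}\in\mathcal{F}_{\mathbf{I}}$, then injectivity and isotonicity of $\varphi$ give $\varphi[V]\subseteq \{q\in \varphi[J]\cap\Uparrow\!\varphi(i)\mid f^{q,\varphi(i)}_{s}(x_{\varphi^{-1}(q)}) = y_{0}\}$, and since $\varphi[V]\in\varphi[\![\mathcal{F}_{\mathbf{I}}]\!]$, Proposition~\ref{co-optimal lift Ulf} together with the condition $\mathcal{F}_{\varphi[\![\mathcal{F}_{\mathbf{I}}]\!]} = \mathcal{F}_{\mathbf{P}}$ imposed on a morphism of $\mathbf{Uffs}$ yields $\varphi[V]\in\mathcal{F}_{\mathbf{P}}$, hence the displayed superset is in $\mathcal{F}_{\mathbf{P}}$ too; by the uniqueness clause in the description of the right value, that value is $y_{0}$. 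I expect the only delicate point to be the middle step: pinning down $\mathfrak{p}^{\varphi}_{\boldsymbol{\mathcal{A}}}$, $\mathfrak{q}^{\varphi}_{\boldsymbol{\mathcal{A}}}$ and the two mediating homomorphisms $h^{(\mathbf{I},\mathcal{F}_{\mathbf{I}}),\boldsymbol{\mathcal{A}}^{\varphi}}$, $h^{(\mathbf{P},\mathcal{F}_{\mathbf{P}}),\boldsymbol{\mathcal{A}}}$ explicitly and keeping straight which final section and which of $\mathcal{F}_{\mathbf{I}}$, $\mathcal{F}_{\mathbf{P}}$ governs each set in play; once that bookkeeping is in order the monotone-image argument above closes the case, and running over all $i$ and $J$ finishes the proof.
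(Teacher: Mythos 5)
Your proposal is correct and follows essentially the same route as the paper: reduce to components, exploit that $\varprojlim_{\mathbf{I}}\boldsymbol{\mathcal{A}}^{\varphi}$ is an (extremal mono)-source and $\varinjlim_{\boldsymbol{\mathcal{F}_{\mathbf{I}}}}\boldsymbol{\mathcal{A}}^{\varphi}(\mathcal{F}_{\mathbf{I}})$ an (extremal epi)-sink, and identify both sides with $h^{\varphi[J],\varphi(i)}$ (up to the reindexing isomorphism). In fact you make explicit the one step the paper leaves tacit, namely that the $\mathcal{F}_{\mathbf{I}}$-description of $h^{J,i}$ for $\boldsymbol{\mathcal{A}}^{\varphi}$ and the $\mathcal{F}_{\mathbf{P}}$-description of $h^{\varphi[J],\varphi(i)}$ for $\boldsymbol{\mathcal{A}}$ agree because $\mathcal{F}_{\varphi[\![\mathcal{F}_{\mathbf{I}}]\!]}=\mathcal{F}_{\mathbf{P}}$.
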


\begin{proof}
Let $\boldsymbol{\mathcal{A}}$ be a projective system in $\mathbf{Alg}(\Sigma)^{\mathbf{P}^{\mathrm{op}}}_{\mathrm{f,cs}}$. We want to show that the homomorphisms $h^{(\mathbf{I},\mathcal{F}_{\mathbf{I}}),\boldsymbol{\mathcal{A}}^{\varphi}}$ and  $\mathfrak{p}^{\varphi}_{\boldsymbol{\mathcal{A}}}\circ h^{(\mathbf{P},\mathcal{F}_{\mathbf{P}}),\boldsymbol{\mathcal{A}}}\circ \mathfrak{q}^{\varphi}_{\boldsymbol{\mathcal{A}}}$ from $\varinjlim_{\boldsymbol{\mathcal{F}_{\mathbf{I}}}}\boldsymbol{\mathcal{A}}^{\varphi}(\mathcal{F}_{\mathbf{I}})$ to $\varprojlim_{\mathbf{I}}\boldsymbol{\mathcal{A}}^{\varphi}$ are identical. To this end, taking into account that $\varprojlim_{\mathbf{I}}\boldsymbol{\mathcal{A}}^{\varphi}$ is an (extremal mono)-source, it suffices to verify that, for every $i\in I$, $f^{\varphi,i}\circ h^{(\mathbf{I},\mathcal{F}_{\mathbf{I}}),\boldsymbol{\mathcal{A}}^{\varphi}}$ is identical to $f^{\varphi,i}\circ \mathfrak{p}^{\varphi}_{\boldsymbol{\mathcal{A}}}\circ h^{(\mathbf{P},\mathcal{F}_{\mathbf{P}}),\boldsymbol{\mathcal{A}}}\circ \mathfrak{q}^{\varphi}_{\boldsymbol{\mathcal{A}}}$, where $f^{\varphi,i}$ is the canonical homomorphism from $\varprojlim_{\mathbf{I}}\boldsymbol{\mathcal{A}}^{\varphi}$ to $\mathbf{A}^{\varphi(i)}$. Moreover, one should bear in mind that, since $\varphi$ is, in particular injective, for every $J\in \mathcal{F}_{\mathbf{I}}$, the $\Sigma$-algebras $\prod_{j\in J}\mathbf{A}^{\varphi(j)}$ and $\prod_{\varphi(j)\in \varphi[J]}\mathbf{A}^{\varphi(j)}$ are isomorphic. We know that $f^{\varphi,i}\circ h^{(\mathbf{I},\mathcal{F}_{\mathbf{I}}),\boldsymbol{\mathcal{A}}^{\varphi}} = h^{\boldsymbol{\mathcal{A}}^{\varphi},\varphi(i)}$, where $h^{\boldsymbol{\mathcal{A}}^{\varphi},\varphi(i)}$ is the unique homomorphism from $\varinjlim_{\boldsymbol{\mathcal{F}_{\mathbf{I}}}}\boldsymbol{\mathcal{A}}^{\varphi}(\mathcal{F}_{\mathbf{I}})$ to $\mathbf{A}^{\varphi(i)}$ such that, for every $J\in \mathcal{F}_{\mathbf{I}}$, $h^{\boldsymbol{\mathcal{A}}^{\varphi},\varphi(i)}\circ \mathrm{p}^{J} = h^{\varphi[J],\varphi(i)}$. On the other hand, by definition of $\mathfrak{p}^{\varphi}_{\boldsymbol{\mathcal{A}}}$, we have that   $f^{\varphi,i}\circ \mathfrak{p}^{\varphi}_{\boldsymbol{\mathcal{A}}} = f^{\varphi(i)}$. Moreover, since $h^{(\mathbf{P},\mathcal{F}_{\mathbf{P}}),\boldsymbol{\mathcal{A}}}$ is the unique homomorphism from $\varinjlim_{\boldsymbol{\mathcal{F}_{\mathbf{P}}}}\boldsymbol{\mathcal{A}}(\mathcal{F}_{\mathbf{P}})$ to $\varprojlim_{\mathbf{P}}\boldsymbol{\mathcal{A}}$ such that, for every $p\in P$, $f^{p}\circ h^{(\mathbf{P},\mathcal{F}_{\mathbf{P}}),\boldsymbol{\mathcal{A}}} = h^{\boldsymbol{\mathcal{A}},p}$, where $f^{p}$ is the canonical homomorphism from $\varprojlim_{\mathbf{P}}\boldsymbol{\mathcal{A}}$ to $\mathbf{A}^{p}$, we have that, for every $i\in I$, taking $p = \varphi(i)$, it happens that $f^{\varphi(i)}\circ h^{(\mathbf{P},\mathcal{F}_{\mathbf{P}}),\boldsymbol{\mathcal{A}}} = h^{\boldsymbol{\mathcal{A}},\varphi(i)}$. Now, from $\mathfrak{q}^{\varphi}_{\boldsymbol{\mathcal{A}}}$, which is the unique homomorphism from $\varinjlim_{\boldsymbol{\mathcal{F}_{\mathbf{I}}}}\boldsymbol{\mathcal{A}}^{\varphi}(\mathcal{F}_{\mathbf{I}})$ to $\varinjlim_{\boldsymbol{\mathcal{F}_{\mathbf{P}}}}\boldsymbol{\mathcal{A}}(\mathcal{F}_{\mathbf{P}})$ such that, for every $J\in \mathcal{F}_{\mathbf{I}}$, $\mathfrak{q}^{\varphi}_{\boldsymbol{\mathcal{A}}}\circ \mathrm{p}^{J} = \mathrm{p}^{\varphi[J]}$ (recall that $\prod_{j\in J}\mathbf{A}^{\varphi(j)}\cong\prod_{\varphi(j)\in \varphi[J]}\mathbf{A}^{\varphi(j)}$), we obtain the homomorphism $h^{\boldsymbol{\mathcal{A}},\varphi(i)}\circ \mathfrak{q}^{\varphi}_{\boldsymbol{\mathcal{A}}}$ from $\varinjlim_{\boldsymbol{\mathcal{F}_{\mathbf{I}}}}\boldsymbol{\mathcal{A}}^{\varphi}(\mathcal{F}_{\mathbf{I}})$ to $\mathbf{A}^{\varphi(i)}$. But it also happens that $h^{\boldsymbol{\mathcal{A}}^{\varphi},\varphi(i)}$ is a homomorphism  from $\varinjlim_{\boldsymbol{\mathcal{F}_{\mathbf{I}}}}\boldsymbol{\mathcal{A}}^{\varphi}(\mathcal{F}_{\mathbf{I}})$ to $\mathbf{A}^{\varphi(i)}$. Therefore, since $\varinjlim_{\boldsymbol{\mathcal{F}_{\mathbf{I}}}}\boldsymbol{\mathcal{A}}^{\varphi}(\mathcal{F}_{\mathbf{I}})$ is an (extremal epi)-sink, to show that $h^{\boldsymbol{\mathcal{A}},\varphi(i)}\circ \mathfrak{q}^{\varphi}_{\boldsymbol{\mathcal{A}}} = h^{\boldsymbol{\mathcal{A}}^{\varphi},\varphi(i)}$ it suffices to prove that, for every $J\in \mathcal{F}_{\mathbf{I}}$, the homomorphisms $h^{\boldsymbol{\mathcal{A}},\varphi(i)}\circ \mathfrak{q}^{\varphi}_{\boldsymbol{\mathcal{A}}}\circ \mathrm{p}^{J}$ and $h^{\boldsymbol{\mathcal{A}}^{\varphi},\varphi(i)}\circ \mathrm{p}^{J}$ from $\prod_{j\in J}\mathbf{A}^{\varphi(j)}\cong\prod_{\varphi(j)\in \varphi[J]}\mathbf{A}^{\varphi(i)}$ to $\mathbf{A}^{\varphi(i)}$ are equal. But both homomorphism are identical to $h^{\varphi[J],\varphi(i)}$. Therefore $h^{\boldsymbol{\mathcal{A}},\varphi(i)}\circ \mathfrak{q}^{\varphi}_{\boldsymbol{\mathcal{A}}} = h^{\boldsymbol{\mathcal{A}}^{\varphi},\varphi(i)}$.
\end{proof}

\begin{proposition}
Let $\varphi\colon (\mathbf{I},\mathcal{F}_{\mathbf{I}}) \mor(\mathbf{P},\mathcal{F}_{\mathbf{P}})$ and $\psi\colon (\mathbf{P},\mathcal{F}_{\mathbf{P}}) \mor(\mathbf{W},\mathcal{F}_{\mathbf{W}})$ be two morphisms in $\mathbf{Uffs}$. Then, from the functors $\mathrm{Alg}(\Sigma)^{\varphi}$ and $\mathrm{Alg}(\Sigma)^{\psi}$, we obtain the functor:
$$
\mathrm{Alg}(\Sigma)^{\psi\circ \varphi} = \mathrm{Alg}(\Sigma)^{\varphi}\circ \mathrm{Alg}(\Sigma)^{\psi}\colon \mathbf{Alg}(\Sigma)^{\mathbf{W}^{\mathrm{op}}}\mor\mathbf{Alg}(\Sigma)^{\mathbf{I}^{\mathrm{op}}}.
$$
Moreover, we have the following natural transformations:
\begin{enumerate}
\item $\mathfrak{p}^{\varphi}\colon\varprojlim_{\mathbf{P}}\cel\varprojlim_{\mathbf{I}}\circ \mathrm{Alg}(\Sigma)^{\varphi}$,
\item $\mathfrak{q}^{\varphi}\colon\varinjlim_{\boldsymbol{\mathcal{F}_{\mathbf{I}}}}\circ D_{(\mathbf{I},\mathcal{F}_{\mathbf{I}})}\circ
    \mathrm{Alg}(\Sigma)^{\varphi}\cel\varinjlim_{\boldsymbol{\mathcal{F}_{\mathbf{P}}}}\circ D_{(\mathbf{P},\mathcal{F}_{\mathbf{P}})}$,
\item $\mathfrak{p}^{\psi}\colon\varprojlim_{\mathbf{W}}\cel\varprojlim_{\mathbf{P}}\circ \mathrm{Alg}(\Sigma)^{\psi}$,
\item $\mathfrak{q}^{\psi}\colon\varinjlim_{\boldsymbol{\mathcal{F}_{\mathbf{P}}}}\circ D_{(\mathbf{P},\mathcal{F}_{\mathbf{P}})}\circ \mathrm{Alg}(\Sigma)^{\psi}\cel \varinjlim_{\boldsymbol{\mathcal{F}_{\mathbf{W}}}}\circ D_{(\mathbf{W},\mathcal{F}_{\mathbf{W}})}$,
\item $\mathfrak{p}^{\psi\circ \varphi}\colon\varprojlim_{\mathbf{W}}\cel\varprojlim_{\mathbf{I}}\circ \mathrm{Alg}(\Sigma)^{\psi\circ \varphi}$,
\item $\mathfrak{q}^{\psi\circ \varphi}\colon\varinjlim_{\boldsymbol{\mathcal{F}_{\mathbf{I}}}}\circ D_{(\mathbf{I},\mathcal{F}_{\mathbf{I}})}\circ \mathrm{Alg}(\Sigma)^{\psi\circ \varphi}\cel \varinjlim_{\boldsymbol{\mathcal{F}_{\mathbf{W}}}}\circ D_{(\mathbf{W},\mathcal{F}_{\mathbf{W}})}$,
\item $h^{(\mathbf{I},\mathcal{F}_{\mathbf{I}}),\boldsymbol{\cdot}}\ast \mathrm{Alg}(\Sigma)^{\varphi}\colon \varinjlim_{\boldsymbol{\mathcal{F}_{\mathbf{I}}}}\circ D_{(\mathbf{I},\mathcal{F}_{\mathbf{I}})}\circ \mathrm{Alg}(\Sigma)^{\varphi}\cel\varprojlim_{\mathbf{I}}\circ \mathrm{Alg}(\Sigma)^{\varphi}$,
\item $h^{(\mathbf{P},\mathcal{F}_{\mathbf{P}}),\boldsymbol{\cdot}}\ast \mathrm{Alg}(\Sigma)^{\psi}\colon \varinjlim_{\boldsymbol{\mathcal{F}_{\mathbf{P}}}}\circ D_{(\mathbf{P},\mathcal{F}_{\mathbf{P}})}\circ \mathrm{Alg}(\Sigma)^{\psi}\cel\varprojlim_{\mathbf{P}}\circ \mathrm{Alg}(\Sigma)^{\psi}$,
\item $h^{(\mathbf{I},\mathcal{F}_{\mathbf{I}}),\boldsymbol{\cdot}}\ast \mathrm{Alg}(\Sigma)^{\psi\circ\varphi}\colon \varinjlim_{\boldsymbol{\mathcal{F}_{\mathbf{I}}}}\circ D_{(\mathbf{I},\mathcal{F}_{\mathbf{I}})}\circ \mathrm{Alg}(\Sigma)^{\psi\circ\varphi}\cel\varprojlim_{\mathbf{I}}\circ \mathrm{Alg}(\Sigma)^{\psi\circ\varphi}$,
    \text{and}
\item $h^{(\mathbf{W},\mathcal{F}_{\mathbf{W}}),\boldsymbol{\cdot}}\colon
    \varinjlim_{\boldsymbol{\mathcal{F}_{\mathbf{W}}}}\circ D_{(\mathbf{W},\mathcal{F}_{\mathbf{W}})}\cel\varprojlim_{\mathbf{W}}$.
\end{enumerate}
Then, from $\mathfrak{p}^{\varphi}\colon\varprojlim_{\mathbf{P}}\cel\varprojlim_{\mathbf{I}}\circ \mathrm{Alg}(\Sigma)^{\varphi}$ and the functor $\mathrm{Alg}(\Sigma)^{\psi}$, we obtain the natural transformation:
$$
\textstyle
\mathfrak{p}^{\varphi}\ast \mathrm{Alg}(\Sigma)^{\psi}\colon\varprojlim_{\mathbf{P}}\circ \mathrm{Alg}(\Sigma)^{\psi}\cel\varprojlim_{\mathbf{I}}\circ \mathrm{Alg}(\Sigma)^{\varphi}\circ \mathrm{Alg}(\Sigma)^{\psi},
$$
and, from $\mathfrak{p}^{\varphi}\ast \mathrm{Alg}(\Sigma)^{\psi}$ and $\mathfrak{p}^{\psi}$, we obtain the natural transformation:
$$
\textstyle
(\mathfrak{p}^{\varphi}\ast \mathrm{Alg}(\Sigma)^{\psi})\circ \mathfrak{p}^{\psi}\colon \varprojlim_{\mathbf{W}}\cel
\varprojlim_{\mathbf{I}}\circ \mathrm{Alg}(\Sigma)^{\varphi}\circ \mathrm{Alg}(\Sigma)^{\psi}.
$$
Similarly, from $\mathfrak{q}^{\varphi}\colon\varinjlim_{\boldsymbol{\mathcal{F}_{\mathbf{I}}}}\circ D_{(\mathbf{I},\mathcal{F}_{\mathbf{I}})}\circ\mathrm{Alg}(\Sigma)^{\varphi}\cel
\varinjlim_{\boldsymbol{\mathcal{F}_{\mathbf{P}}}}\circ D_{(\mathbf{P},\mathcal{F}_{\mathbf{P}})}$ and the functor $\mathrm{Alg}(\Sigma)^{\psi}$, we obtain the natural transformation:
$$
\textstyle
\mathfrak{q}^{\varphi}\ast \mathrm{Alg}(\Sigma)^{\psi}\colon\varinjlim_{\boldsymbol{\mathcal{F}_{\mathbf{I}}}}\circ D_{(\mathbf{I},\mathcal{F}_{\mathbf{I}})}\circ\mathrm{Alg}(\Sigma)^{\varphi}\circ \mathrm{Alg}(\Sigma)^{\psi}\cel\varinjlim_{\boldsymbol{\mathcal{F}_{\mathbf{P}}}}\circ D_{(\mathbf{P},\mathcal{F}_{\mathbf{P}})}\circ \mathrm{Alg}(\Sigma)^{\psi},
$$
and, from $\mathfrak{q}^{\varphi}\ast \mathrm{Alg}(\Sigma)^{\psi}$ and $\mathfrak{q}^{\psi}$, we obtain the natural transformation:
$$
\textstyle
\mathfrak{q}^{\psi}\circ(\mathfrak{q}^{\varphi}\ast \mathrm{Alg}(\Sigma)^{\psi})\colon \varinjlim_{\boldsymbol{\mathcal{F}_{\mathbf{I}}}}\circ D_{(\mathbf{I},\mathcal{F}_{\mathbf{I}})}\circ\mathrm{Alg}(\Sigma)^{\varphi}\circ \mathrm{Alg}(\Sigma)^{\psi} \cel\varinjlim_{\boldsymbol{\mathcal{F}_{\mathbf{W}}}}\circ D_{(\mathbf{W},\mathcal{F}_{\mathbf{W}})}.
$$
Then it happens that $\mathfrak{p}^{\psi\circ\varphi} = (\mathfrak{p}^{\varphi}\ast \mathrm{Alg}(\Sigma)^{\psi})\circ \mathfrak{p}^{\psi}$ and $\mathfrak{q}^{\psi\circ\varphi} = \mathfrak{q}^{\psi}\circ(\mathfrak{q}^{\varphi}\ast \mathrm{Alg}(\Sigma)^{\psi})$. Therefore
$$
h^{(\mathbf{I},\mathcal{F}_{\mathbf{I}}),\boldsymbol{\cdot}}\ast \mathrm{Alg}(\Sigma)^{\psi\circ\varphi} = \mathfrak{p}^{\psi\circ\varphi}\circ h^{(\mathbf{W},\mathcal{F}_{\mathbf{W}}),\boldsymbol{\cdot}}\circ \mathfrak{q}^{\psi\circ\varphi}.
$$
\end{proposition}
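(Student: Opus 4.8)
The plan is to reduce the whole statement to three ingredients: (a) the \emph{functoriality of reindexing}, $\mathrm{Alg}(\Sigma)^{\psi\circ\varphi} = \mathrm{Alg}(\Sigma)^{\varphi}\circ\mathrm{Alg}(\Sigma)^{\psi}$; (b) two ``cocycle'' identities relating $\mathfrak{p}^{\psi\circ\varphi}$, $\mathfrak{q}^{\psi\circ\varphi}$ to the corresponding transformations for $\varphi$ and $\psi$; and (c) the standard whiskering/interchange calculus for $2$-categories, applied to Proposition~\ref{Cylinder equation} for $\varphi$ and for $\psi$. For (a) I would simply unwind the definitions in Proposition~\ref{NatTrans p and q}: on objects $(\boldsymbol{\mathcal{A}}^{\psi})^{\varphi}$ has $i$-th algebra $\mathbf{A}^{\psi(\varphi(i))}$ and transition homomorphisms $f^{\psi(\varphi(j)),\psi(\varphi(i))}$, which is exactly $\boldsymbol{\mathcal{A}}^{\psi\circ\varphi}$, and similarly on morphisms $(u^{\psi})^{\varphi} = u^{\psi\circ\varphi}$. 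Restricting along $\psi$ takes a constant-support family of finite $\Sigma$-algebras to another such family (finiteness is obvious, and since $P\neq\varnothing$ all the algebras $\mathbf{A}^{\psi(p)}$ share the common support of the family over $\mathbf{W}$), so the bi-restriction to $\mathbf{Alg}(\Sigma)^{\mathbf{W}^{\mathrm{op}}}_{\mathrm{f,cs}}$ and $\mathbf{Alg}(\Sigma)^{\mathbf{P}^{\mathrm{op}}}_{\mathrm{f,cs}}$ exists; this also gives meaning to all the composite functors and whiskered natural transformations (1)--(10), whose being natural transformations of the asserted types is then immediate from Proposition~\ref{NatTrans}, Proposition~\ref{NatTrans p and q}, and the fact that whiskering a natural transformation by a functor again yields a natural transformation.

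Next I would prove the two cocycle identities componentwise. Fix $\boldsymbol{\mathcal{A}}$ in $\mathbf{Alg}(\Sigma)^{\mathbf{W}^{\mathrm{op}}}_{\mathrm{f,cs}}$. The component of $(\mathfrak{p}^{\varphi}\ast\mathrm{Alg}(\Sigma)^{\psi})\circ\mathfrak{p}^{\psi}$ at $\boldsymbol{\mathcal{A}}$ is $\mathfrak{p}^{\varphi}_{\boldsymbol{\mathcal{A}}^{\psi}}\circ\mathfrak{p}^{\psi}_{\boldsymbol{\mathcal{A}}}$, a homomorphism from $\varprojlim_{\mathbf{W}}\boldsymbol{\mathcal{A}}$ to $\varprojlim_{\mathbf{I}}(\boldsymbol{\mathcal{A}}^{\psi})^{\varphi} = \varprojlim_{\mathbf{I}}\boldsymbol{\mathcal{A}}^{\psi\circ\varphi}$. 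Composing with the canonical projection $f^{\psi\circ\varphi,i}$ of $\varprojlim_{\mathbf{I}}\boldsymbol{\mathcal{A}}^{\psi\circ\varphi}$ onto $\mathbf{A}^{\psi(\varphi(i))}$ and using first the defining relation of $\mathfrak{p}^{\varphi}$ (for the system $\boldsymbol{\mathcal{A}}^{\psi}$) and then that of $\mathfrak{p}^{\psi}$ (for $\boldsymbol{\mathcal{A}}$, at the index $\varphi(i)$), one gets $f^{\psi\circ\varphi,i}\circ\mathfrak{p}^{\varphi}_{\boldsymbol{\mathcal{A}}^{\psi}}\circ\mathfrak{p}^{\psi}_{\boldsymbol{\mathcal{A}}} = f^{\psi(\varphi(i))}$ for all $i\in I$, which is precisely the relation characterizing $\mathfrak{p}^{\psi\circ\varphi}_{\boldsymbol{\mathcal{A}}}$; since $\varprojlim_{\mathbf{I}}\boldsymbol{\mathcal{A}}^{\psi\circ\varphi}$ is an (extremal mono)-source this forces $\mathfrak{p}^{\varphi}_{\boldsymbol{\mathcal{A}}^{\psi}}\circ\mathfrak{p}^{\psi}_{\boldsymbol{\mathcal{A}}} = \mathfrak{p}^{\psi\circ\varphi}_{\boldsymbol{\mathcal{A}}}$, hence $\mathfrak{p}^{\psi\circ\varphi} = (\mathfrak{p}^{\varphi}\ast\mathrm{Alg}(\Sigma)^{\psi})\circ\mathfrak{p}^{\psi}$. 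Dually, for $\mathfrak{q}$ I would precompose $\mathfrak{q}^{\psi}_{\boldsymbol{\mathcal{A}}}\circ\mathfrak{q}^{\varphi}_{\boldsymbol{\mathcal{A}}^{\psi}}$ with each canonical homomorphism $\mathrm{p}^{J}$, $J\in\mathcal{F}_{\mathbf{I}}$, into $\varinjlim_{\boldsymbol{\mathcal{F}_{\mathbf{I}}}}\boldsymbol{\mathcal{A}}^{\psi\circ\varphi}(\mathcal{F}_{\mathbf{I}})$, apply the defining relation of $\mathfrak{q}^{\varphi}$ to reach $\mathrm{p}^{\varphi[J]}$, then that of $\mathfrak{q}^{\psi}$ (legitimate because $\varphi[J]\in\mathcal{F}_{\mathbf{P}}$, since $\mathcal{F}_{\mathbf{P}} = \mathcal{F}_{\varphi[\![\mathcal{F}_{\mathbf{I}}]\!]}$) to reach $\mathrm{p}^{\psi[\varphi[J]]} = \mathrm{p}^{(\psi\circ\varphi)[J]}$, which is the value of $\mathfrak{q}^{\psi\circ\varphi}_{\boldsymbol{\mathcal{A}}}\circ\mathrm{p}^{J}$; since $\varinjlim_{\boldsymbol{\mathcal{F}_{\mathbf{I}}}}\boldsymbol{\mathcal{A}}^{\psi\circ\varphi}(\mathcal{F}_{\mathbf{I}})$ is an (extremal epi)-sink, this yields $\mathfrak{q}^{\psi\circ\varphi} = \mathfrak{q}^{\psi}\circ(\mathfrak{q}^{\varphi}\ast\mathrm{Alg}(\Sigma)^{\psi})$. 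The place where care is required --- and what I expect to be the main obstacle --- is the bookkeeping of the canonical reindexing isomorphisms $\prod_{j\in J}\mathbf{A}^{\psi(\varphi(j))}\cong\prod_{q\in\varphi[J]}\mathbf{A}^{\psi(q)}\cong\prod_{w\in(\psi\circ\varphi)[J]}\mathbf{A}^{w}$ through which these relations are phrased: one must check that their composite is the single reindexing isomorphism built into the definition of $\mathfrak{q}^{\psi\circ\varphi}$, which is where the \emph{injectivity} of $\varphi$ and $\psi$ (part of being a morphism of $\mathbf{Uffs}$) is genuinely used, together with $\psi[\varphi[J]] = (\psi\circ\varphi)[J]$ and the already-established identity $\mathcal{F}_{(\psi\circ\varphi)[\![\mathcal{F}_{\mathbf{I}}]\!]} = \mathcal{F}_{\psi[\![\mathcal{F}_{\varphi[\![\mathcal{F}_{\mathbf{I}}]\!]}]\!]}$ guaranteeing $(\psi\circ\varphi)[J]\in\mathcal{F}_{\mathbf{W}}$ so that all the homomorphisms in play exist.

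Finally I would assemble the asserted equation by formal $2$-categorical manipulation. Using (a), rewrite $h^{(\mathbf{I},\mathcal{F}_{\mathbf{I}}),\boldsymbol{\cdot}}\ast\mathrm{Alg}(\Sigma)^{\psi\circ\varphi} = (h^{(\mathbf{I},\mathcal{F}_{\mathbf{I}}),\boldsymbol{\cdot}}\ast\mathrm{Alg}(\Sigma)^{\varphi})\ast\mathrm{Alg}(\Sigma)^{\psi}$; substitute Proposition~\ref{Cylinder equation} for $\varphi$ to replace the inner transformation by $\mathfrak{p}^{\varphi}\circ h^{(\mathbf{P},\mathcal{F}_{\mathbf{P}}),\boldsymbol{\cdot}}\circ\mathfrak{q}^{\varphi}$; distribute the whiskering by $\mathrm{Alg}(\Sigma)^{\psi}$ over this vertical composite (whiskering preserves vertical composition), obtaining $(\mathfrak{p}^{\varphi}\ast\mathrm{Alg}(\Sigma)^{\psi})\circ(h^{(\mathbf{P},\mathcal{F}_{\mathbf{P}}),\boldsymbol{\cdot}}\ast\mathrm{Alg}(\Sigma)^{\psi})\circ(\mathfrak{q}^{\varphi}\ast\mathrm{Alg}(\Sigma)^{\psi})$; then apply Proposition~\ref{Cylinder equation} for $\psi$ to rewrite the middle factor as $\mathfrak{p}^{\psi}\circ h^{(\mathbf{W},\mathcal{F}_{\mathbf{W}}),\boldsymbol{\cdot}}\circ\mathfrak{q}^{\psi}$; and regroup by associativity of vertical composition, substituting the two cocycle identities of the previous paragraph, to arrive at $\mathfrak{p}^{\psi\circ\varphi}\circ h^{(\mathbf{W},\mathcal{F}_{\mathbf{W}}),\boldsymbol{\cdot}}\circ\mathfrak{q}^{\psi\circ\varphi}$, which is the desired conclusion. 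Essentially all the work lies in the second paragraph; the remainder is formal.
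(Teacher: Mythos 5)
Your proposal is correct and follows essentially the same route as the paper: both establish the two cocycle identities $\mathfrak{p}^{\psi\circ\varphi} = (\mathfrak{p}^{\varphi}\ast \mathrm{Alg}(\Sigma)^{\psi})\circ \mathfrak{p}^{\psi}$ and $\mathfrak{q}^{\psi\circ\varphi} = \mathfrak{q}^{\psi}\circ(\mathfrak{q}^{\varphi}\ast \mathrm{Alg}(\Sigma)^{\psi})$ componentwise via the universal properties (the (extremal mono)-source and (extremal epi)-sink arguments), and then derive the final equation by combining Proposition~\ref{Cylinder equation} for $\varphi$ and for $\psi$ with the Godement interchange law; running the final chain of equalities in the opposite direction is immaterial. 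Your added attention to the reindexing isomorphisms $\prod_{j\in J}\mathbf{A}^{\psi(\varphi(j))}\cong\prod_{w\in(\psi\circ\varphi)[J]}\mathbf{A}^{w}$ in the $\mathfrak{q}$ case makes explicit a point the paper leaves to the reader.
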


\begin{proof}
To show that $\mathfrak{p}^{\psi\circ\varphi} = (\mathfrak{p}^{\varphi}\ast \mathrm{Alg}(\Sigma)^{\psi})\circ \mathfrak{p}^{\psi}$ it suffices to verify that, for every projective system $\boldsymbol{\mathcal{A}}$  in $\mathbf{Alg}(\Sigma)^{\mathbf{W}^{\mathrm{op}}}_{\mathrm{f,cs}}$, the homomorphisms
$$
\textstyle
((\mathfrak{p}^{\varphi}\ast \mathrm{Alg}(\Sigma)^{\psi})\circ \mathfrak{p}^{\psi})_{\boldsymbol{\mathcal{A}}} = \mathfrak{p}^{\varphi}_{\boldsymbol{\mathcal{A}}^{\psi}}\circ\mathfrak{p}^{\psi}_{\boldsymbol{\mathcal{A}}},\,\, \mathfrak{p}^{\psi\circ\varphi}_{\boldsymbol{\mathcal{A}}}\colon \varprojlim_{\mathbf{W}}\boldsymbol{\mathcal{A}}\mor \varprojlim_{\mathbf{I}}\boldsymbol{\mathcal{A}}^{\psi\circ\varphi}
$$
are equal. But it happens that $\mathfrak{p}^{\psi\circ\varphi}_{\boldsymbol{\mathcal{A}}}$ is the unique homomorphism from $\varprojlim_{\mathbf{W}}\boldsymbol{\mathcal{A}}$ to $\varprojlim_{\mathbf{I}}\boldsymbol{\mathcal{A}}^{\psi\circ\varphi}$ such that, for every $i\in I$, $f^{\psi\circ\varphi,i}\circ \mathfrak{p}^{\psi\circ\varphi}_{\boldsymbol{\mathcal{A}}} = f^{\psi(\varphi(i))}$, where $f^{\psi\circ\varphi,i}$ is the canonical homomorphism from $\varprojlim_{\mathbf{I}}\boldsymbol{\mathcal{A}}^{\psi\circ\varphi}$ to $\mathbf{A}^{\psi(\varphi(i))}$, and, for every $i\in I$, we have that
\begin{align}
f^{\psi\circ\varphi,i}\circ
(\mathfrak{p}^{\varphi}_{\boldsymbol{\mathcal{A}}^{\psi}}\circ\mathfrak{p}^{\psi}_{\boldsymbol{\mathcal{A}}}) &=
(f^{\psi\circ\varphi,i}\circ\mathfrak{p}^{\varphi}_{\boldsymbol{\mathcal{A}}^{\psi}})
\circ\mathfrak{p}^{\psi}_{\boldsymbol{\mathcal{A}}} \notag \\
&= f^{\psi,\varphi(i)}\circ \mathfrak{p}^{\psi}_{\boldsymbol{\mathcal{A}}} \notag \\
&= f^{\psi(\varphi(i))}. \notag
\end{align}
Therefore $((\mathfrak{p}^{\varphi}\ast \mathrm{Alg}(\Sigma)^{\psi})\circ \mathfrak{p}^{\psi})_{\boldsymbol{\mathcal{A}}} = \mathfrak{p}^{\psi\circ\varphi}_{\boldsymbol{\mathcal{A}}}$. Hence $\mathfrak{p}^{\psi\circ\varphi} = (\mathfrak{p}^{\varphi}\ast \mathrm{Alg}(\Sigma)^{\psi})\circ \mathfrak{p}^{\psi}$.

By a similar argument it follows that $\mathfrak{q}^{\psi\circ\varphi} = \mathfrak{q}^{\psi}\circ(\mathfrak{q}^{\varphi}\ast \mathrm{Alg}(\Sigma)^{\psi})$.

It remains to show that
$$
h^{(\mathbf{I},\mathcal{F}_{\mathbf{I}}),\boldsymbol{\cdot}}\ast \mathrm{Alg}(\Sigma)^{\psi\circ\varphi} = \mathfrak{p}^{\psi\circ\varphi}\circ h^{(\mathbf{W},\mathcal{F}_{\mathbf{W}}),\boldsymbol{\cdot}}\circ \mathfrak{q}^{\psi\circ\varphi}.
$$
But we have that
\begin{alignat}{2}
\mathfrak{p}^{\psi\circ\varphi}\circ h^{(\mathbf{W},\mathcal{F}_{\mathbf{W}}),\boldsymbol{\cdot}}\circ \mathfrak{q}^{\psi\circ\varphi} &= ((\mathfrak{p}^{\varphi}\ast\mathrm{Alg}(\Sigma)^{\psi})\circ\mathfrak{p}^{\psi})\circ h^{(\mathbf{W},\mathcal{F}_{\mathbf{W}}),\boldsymbol{\cdot}}\circ (\mathfrak{q}^{\psi}\circ(\mathfrak{q}^{\varphi}\ast \mathrm{Alg}(\Sigma)^{\psi})  & & \text{(by def.)}\notag \\
&= (\mathfrak{p}^{\varphi}\ast\mathrm{Alg}(\Sigma)^{\psi})\circ(\mathfrak{p}^{\psi}\circ h^{(\mathbf{W},\mathcal{F}_{\mathbf{W}}),\boldsymbol{\cdot}}\circ \mathfrak{q}^{\psi})\circ(\mathfrak{q}^{\varphi}\ast \mathrm{Alg}(\Sigma)^{\psi})  & & \text{(by ass.)} \notag \\
&= (\mathfrak{p}^{\varphi}\ast\mathrm{Alg}(\Sigma)^{\psi})\circ(h^{(\mathbf{P},\mathcal{F}_{\mathbf{P}}),\boldsymbol{\cdot}}\ast  \mathrm{Alg}(\Sigma)^{\psi})\circ(\mathfrak{q}^{\varphi}\ast \mathrm{Alg}(\Sigma)^{\psi})  & & \text{(by def.)} \notag \\
&= (\mathfrak{p}^{\varphi}\ast\mathrm{Alg}(\Sigma)^{\psi})\circ ((h^{(\mathbf{P},\mathcal{F}_{\mathbf{P}}),\boldsymbol{\cdot}}\circ \mathfrak{q}^{\varphi})\ast \mathrm{Alg}(\Sigma)^{\psi}) & & \hspace{-1.15cm}\text{(Godement law)} \notag \\
&= (\mathfrak{p}^{\varphi}\circ h^{(\mathbf{P},\mathcal{F}_{\mathbf{P}}),\boldsymbol{\cdot}}\circ \mathfrak{q}^{\varphi})\ast \mathrm{Alg}(\Sigma)^{\psi}& & \hspace{-1.15cm}\text{(Godement law)} \notag \\
&= (h^{(\mathbf{I},\mathcal{F}_{\mathbf{I}}),\boldsymbol{\cdot}}\ast\mathrm{Alg}(\Sigma)^{\varphi})\ast\mathrm{Alg}(\Sigma)^{\psi} & & \text{(by def.)}\notag \\
&= h^{(\mathbf{I},\mathcal{F}_{\mathbf{I}}),\boldsymbol{\cdot}}\ast \mathrm{Alg}(\Sigma)^{\psi\circ\varphi} & & \hspace{-1.35cm}\text{(by ass. and def.)}\notag
\end{alignat}

Regarding the natural transformations annotated ``Godement law'', in the equations listed above, one should take into account that for the functor $\mathrm{Alg}(\Sigma)^{\psi}$ from $\mathbf{Alg}(\Sigma)^{\mathbf{W}^{\mathrm{op}}}$ to $\mathbf{Alg}(\Sigma)^{\mathbf{P}^{\mathrm{op}}}$ since, as a particular case of the conventions stated just before Proposition~\ref{NatTrans p and q},
$$
\mathrm{Alg}(\Sigma)^{\psi}\circ \mathrm{Alg}(\Sigma)^{\psi} = \mathrm{id}_{\mathrm{Alg}(\Sigma)^{\psi}}\circ \mathrm{id}_{\mathrm{Alg}(\Sigma)^{\psi}} = \mathrm{id}_{\mathrm{Alg}(\Sigma)^{\psi}} = \mathrm{Alg}(\Sigma)^{\psi},
$$
we have that
\begin{gather}
(h^{(\mathbf{P},\mathcal{F}_{\mathbf{P}}),\boldsymbol{\cdot}}\circ \mathfrak{q}^{\varphi})\ast \mathrm{Alg}(\Sigma)^{\psi} = (h^{(\mathbf{P},\mathcal{F}_{\mathbf{P}}),\boldsymbol{\cdot}}\circ \mathfrak{q}^{\varphi})\ast (\mathrm{Alg}(\Sigma)^{\psi}\circ \mathrm{Alg}(\Sigma)^{\psi}) \text{ and} \notag \\
(\mathfrak{p}^{\varphi}\circ h^{(\mathbf{P},\mathcal{F}_{\mathbf{P}}),\boldsymbol{\cdot}}\circ \mathfrak{q}^{\varphi})\ast \mathrm{Alg}(\Sigma)^{\psi} = (\mathfrak{p}^{\varphi}\circ h^{(\mathbf{P},\mathcal{F}_{\mathbf{P}}),\boldsymbol{\cdot}}\circ \mathfrak{q}^{\varphi})\ast (\mathrm{Alg}(\Sigma)^{\psi}\circ \mathrm{Alg}(\Sigma)^{\psi}).\notag
\end{gather}
\end{proof}

We would like to conclude this article by pointing out that, from the above results and taking into account the work done in~\cite{cs10}, it seems to us that a generalization of the results stated in this section to a 2-categorial setting is feasible.

Let us begin by noticing that the above category-theoretic rendering of Mariano and Miraglia theorem has been done by fixing a pair $\mathbf{\Sigma} = (S,\Sigma)$, where $S$ is a set of sorts and $\Sigma$ an $S$-sorted signature. In making so we have assigned to every object $(\mathbf{I},\mathcal{F}_{\mathbf{I}})$ of $\mathbf{Uffs}$ a natural transformation  $h^{(\mathbf{P},\mathcal{F}_{\mathbf{P}}),\boldsymbol{\cdot}}$, and to every morphism $\varphi$ from $(\mathbf{I},\mathcal{F}_{\mathbf{I}})$ to $(\mathbf{P},\mathcal{F}_{\mathbf{P}})$ in $\mathbf{Uffs}$ a pair of natural transformations $(\mathfrak{p}^{\varphi},\mathfrak{q}^{\varphi})$ satisfying the equation stated in Proposition~\ref{Cylinder equation}. Moreover, we have shown that such a correspondence is, in fact, a functor.

Faced with such a situation, the next, natural, step would be to investigate what happens if one allows the variation of $\mathbf{\Sigma} = (S,\Sigma)$. In this regard, we would note that there exists a contravariant functor $\mathrm{Sig}$ from $\mathbf{Set}$ to $\mathbf{Cat}$. Its object mapping sends each set of sorts $S$ to $\mathrm{Sig}(S) = \mathbf{Sig}(S)$ (= $\mathbf{Set}^{S^{\star}\times S}$), the category of all $S$-sorted signatures; its arrow mapping sends each mapping $\alpha$ from $S$ to $T$ to the functor $\mathrm{Sig}(\alpha)$ from $\mathbf{Sig}(T)$ to $\mathbf{Sig}(S)$ which relabels $T$-sorted signatures into $S$-sorted signatures, i.e., $\mathrm{Sig}(\alpha)$ assigns to a $T$-sorted signature $\Lambda\colon T^{\star}\times T\mor \boldsymbol{\mathcal{U}}$ the $S$-sorted signature $\mathrm{Sig}(\alpha)(\Lambda) = \Lambda_{\alpha^{\star}\times\alpha}$, where $\Lambda_{\alpha^{\star}\times\alpha}$ is the composition of $\alpha^{\star}\times\alpha\colon S^{\star}\times S\mor T^{\star}\times T$ and $\Lambda$, and assigns to a morphism of $T$-sorted signatures $d$ from $\Lambda$ to $\Lambda'$ the morphism of $S$-sorted signatures $\mathrm{Sig}(\alpha)(d) = d_{\alpha^{\star}\times \alpha}$ from $\Lambda_{\alpha^{\star}\times \alpha}$ to $\Lambda'_{\alpha^{\star}\times \alpha}$. Then the category $\mathbf{Sig}$, of \emph{many-sorted signatures} and \emph{many-sorted signature morphisms}, is given by $\mathbf{Sig} = \int^{\mathbf{Set}}\mathrm{Sig}$. Therefore $\mathbf{Sig}$ has as objects the pairs $\mathbf{\Sigma} = (S,\Sigma)$, where $S$ is a set of sorts and $\Sigma$ an $S$-sorted signature, and, as many-sorted signature morphisms from $\mathbf{\Sigma} = (S,\Sigma)$ to $\mathbf{\Lambda} = (T,\Lambda)$, the pairs $\mathbf{d} = (\alpha,d)$, where $\alpha\colon S\mor T$ is a morphism in $\mathbf{Set}$ while $d\colon \Sigma\mor \Lambda_{\alpha^{\star}\times \alpha}$ is a morphism in $\mathbf{Sig}(S)$ (for details see~\cite{cs10}).
%The composition of
%$(\varphi,d)\colon (S,\Sigma)\mor (T,\Lambda)$ and $(\psi,e)\colon (T,\Lambda)\mor(U,\Omega)$, denoted by $(\psi,e)\circ
%(\varphi,d)$, is $(\psi\circ\varphi, e_{\varphi^{\star}\times \varphi}\circ d)$, where
%$$
%e_{\varphi^{\star}\times \varphi}\colon\Lambda_{\varphi^{\star}\times \varphi}\mor
%(\Omega_{\psi^{\star}\times \psi})_{\varphi^{\star}\times \varphi}(=
%\Omega_{(\psi\circ\varphi)^{\star}\times (\psi\circ\varphi)}).
%$$

Moreover, there exists a contravariant functor $\mathrm{Alg}$ from $\mathbf{Sig}$ to $\mathbf{Cat}$. Its object mapping sends each signature $\mathbf{\Sigma}$ to $\mathrm{Alg}(\mathbf{\Sigma}) = \mathbf{Alg}(\mathbf{\Sigma})$, the category of $\mathbf{\Sigma}$-algebras; its arrow mapping sends each signature morphism $\mathbf{d}\colon\mathbf{\Sigma}\mor\mathbf{\Lambda}$ to the functor $\mathrm{Alg}(\mathbf{d}) = \mathbf{d}^{\ast}\colon \mathbf{Alg}(\mathbf{\Lambda})\mor \mathbf{Alg}(\mathbf{\Sigma})$ defined as follows: its object mapping sends each $\mathbf{\Lambda}$-algebra $\mathbf{B} = (B,G)$ to the $\mathbf{\Sigma}$-algebra $\mathbf{d}^{\ast}(\mathbf{B}) = (B_{\alpha},G^{\mathbf{d}})$, where $B_{\alpha}$ is  $(B_{\alpha(s)})_{s\in S}$ and $G^{\mathbf{d}}$ is the composition of the $S^{\star}\times S$-sorted mappings $d$ from $\Sigma$ to $\Lambda_{\alpha^{\star}\times \alpha}$ and $G_{\alpha^{\star}\times \alpha}$ from $\Lambda_{\alpha^{\star}\times \alpha}$ to $\mathcal{O}_{T}(B)_{\alpha^{\star}\times \alpha}$, where $\mathcal{O}_{T}(B)$ stands for the $T^{\star}\times T$-sorted set $(\mathrm{Hom}(B_{u},B_{t}))_{(u,t)\in T^{\star}\times T}$, of the finitary operations on the $T$-sorted set $B$; its arrow mapping sends each $\mathbf{\Lambda}$-homomorphism $f$ from $\mathbf{B}$ to $\mathbf{B}'$ to the $\mathbf{\Sigma}$-homomorphism $\mathbf{d}^{\ast}(f) = f_{\alpha}$ from $\mathbf{d}^{\ast}(\mathbf{B})$ to $\mathbf{d}^{\ast}(\mathbf{B}')$, where $f_{\alpha}$ is $(f_{\alpha(s)})_{s\in S}$. Then the category $\mathbf{Alg}$, of \emph{many-sorted algebras} and \emph{many-sorted algebra homomorphisms}, is given by $\mathbf{Alg} = \int^{\mathbf{Sig}}\mathrm{Alg}$. Therefore the category $\mathbf{Alg}$ has as objects the pairs $(\mathbf{\Sigma},\mathbf{A})$, where $\mathbf{\Sigma}$ is a signature and $\mathbf{A}$ a $\mathbf{\Sigma}$-algebra, and as morphisms from $(\mathbf{\Sigma},\mathbf{A})$ to $(\mathbf{\Lambda},\mathbf{B})$, the pairs $(\mathbf{d},f)$, with $\mathbf{d}$ a signature morphism from $\mathbf{\Sigma}$ to $\mathbf{\Lambda}$ and $f$ a $\mathbf{\Sigma}$-homomorphism from $\mathbf{A}$ to $\mathbf{d}^{\ast}(\mathbf{B})$ (for details see~\cite{cs10}).

Thus, the new goal would be to assigns to an object $((\mathbf{I},\mathcal{F}_{\mathbf{I}}),\mathbf{\Sigma})$ of the category  $\mathbf{Uffs}\times \mathbf{Sig}$ a natural transformation $h^{((\mathbf{I},\mathcal{F}_{\mathbf{I}}),\mathbf{\Sigma}),\boldsymbol{\cdot}}$ from $\varinjlim_{\boldsymbol{\mathcal{F}_{\mathbf{I}}}}\circ D_{(\mathbf{I},\mathcal{F}_{\mathbf{I}})}$ to $\varprojlim_{\mathbf{I}}$, and to a morphism $(\varphi,\mathbf{d})$ from $((\mathbf{I},\mathcal{F}_{\mathbf{I}}),\mathbf{\Sigma})$ to $((\mathbf{P},\mathcal{F}_{\mathbf{P}}),\mathbf{\Lambda})$ a suitable pair of natural transformations $(\mathfrak{p}^{(\varphi,\mathbf{d})},\mathfrak{q}^{(\varphi,\mathbf{d})})$, where
\begin{gather}
\textstyle
\mathfrak{p}^{(\varphi,\mathbf{d})}\colon \mathbf{d}^{\ast}\ast \varprojlim_{\mathbf{P}}\cel \varprojlim_{\mathbf{I}}\ast((\mathbf{d}^{\ast})^{\mathbf{I}^{\mathrm{op}}}\circ\mathrm{Alg}(\mathbf{\Lambda})^{\varphi})  \text{ and} \notag \\
\textstyle
\mathfrak{q}^{(\varphi,\mathbf{d})}\colon (\varinjlim_{\boldsymbol{\mathcal{F}_{\mathbf{I}}}}\circ D_{(\mathbf{I},\mathcal{F}_{\mathbf{I}})})\ast ((\mathbf{d}^{\ast})^{\mathbf{I}^{\mathrm{op}}}\circ\mathrm{Alg}(\mathbf{\Lambda})^{\varphi})\cel \mathbf{d}^{\ast}\ast \varinjlim_{\boldsymbol{\mathcal{F}_{\mathbf{P}}}}\circ D_{(\mathbf{P},\mathcal{F}_{\mathbf{P}})}.\notag
\end{gather}
To assist the reader in identifying the just stated natural transformations, we add the following diagram:
$$
\xymatrix@C=96pt@R=65pt{
\mathbf{Alg}(\mathbf{\Sigma})^{\mathbf{I}^{\mathrm{op}}}_{\mathrm{f,cs}}
  \ar@/^20pt/[r]^{\varinjlim_{\boldsymbol{\mathcal{F}_{\mathbf{I}}}}\circ D_{(\mathbf{I},\mathcal{F}_{\mathbf{I}})}}="a"
  \ar@/_20pt/[r]_{\varprojlim_{\mathbf{I}}}="b" &
\mathbf{Alg}(\mathbf{\Sigma}) \\
\mathbf{Alg}(\mathbf{\Lambda})^{\mathbf{P}^{\mathrm{op}}}_{\mathrm{f,cs}}
\ar[u]^{(\mathbf{d}^{\ast})^{\mathbf{I}^{\mathrm{op}}}\circ\mathrm{Alg}(\mathbf{\Lambda})^{\varphi}}
  \ar@/^20pt/[r]^{\varinjlim_{\boldsymbol{\mathcal{F}_{\mathbf{P}}}}\circ D_{(\mathbf{P},\mathcal{F}_{\mathbf{P}})}}="c"
  \ar@/_20pt/[r]_{\varprojlim_{\mathbf{P}}}="d" &
\mathbf{Alg}(\mathbf{\Lambda})\ar[u]_{\mathbf{d}^{\ast}}
\ar @{} "a";"b" |{\dir{=>}}^{\,h^{((\mathbf{I},\mathcal{F}_{\mathbf{I}}),\mathbf{\Sigma}),\boldsymbol{\cdot}}}
\ar @{}"c";"d" |{\dir{=>}}^{\,h^{((\mathbf{P},\mathcal{F}_{\mathbf{P}}),\mathbf{\Lambda}),\boldsymbol{\cdot}}}
}
$$

Moreover, since for two morphisms $(\varphi,\mathbf{d}),\,(\varphi',\mathbf{d}')\colon((\mathbf{I},\mathcal{F}_{\mathbf{I}}),\mathbf{\Sigma})\mor
((\mathbf{P},\mathcal{F}_{\mathbf{P}}),\mathbf{\Lambda})$ there exists a natural notion of 2-cell from $\mathbf{d}$ to $\mathbf{d}'$ (for details see~\cite{cs10}) and an obvious notion of 2-cell from $\varphi$ to $\varphi'$ (actually, there exists a 2-cell from $\varphi$ to $\varphi'$ if, and only if, for every $i\in I$, $\varphi(i)\leq \varphi'(i)$), we have 2-cells from  $(\varphi,\mathbf{d})$ to $(\varphi',\mathbf{d}')$, and, surely, the process described above would be 2-categorial.

\end{document}